 \DeclareMathOperator{\curl}{curl}
\DeclareMathOperator{\Supp }{supp}
\DeclareMathOperator{\Id }{Id} 
\DeclareMathOperator{\D}{div}
\DeclareMathOperator{\Tr}{Tr} 
\DeclareMathOperator{\dist}{dist}
\DeclareMathOperator{\Lin}{lin} 
\DeclareMathOperator{\Osc}{osc} 
\DeclareMathOperator{\Rem}{rem}
\DeclareMathOperator{\Cor}{cor} 
\DeclareMathOperator{\Lap}{lap} 
\DeclareMathOperator{\Acc}{acc} 
\DeclareMathOperator{\Dri}{dri}
\newtheorem{theorem}{Theorem}[section]
\newtheorem{lemma}[theorem]{Lemma}
\newtheorem{proposition}[theorem]{Proposition}
\newtheorem{definition}[theorem]{Definition}
\newtheorem{corollary}[theorem]{Corollary}
\newtheorem{remark}[theorem]{Remark}
\def \TT  {\mathbb{T}} 
\def \RR {\mathbb{R}}  
\def \NN {\mathbb{N}}  
\def \ZZ {\mathbb{Z}}  
\def \p {\partial}
\def \ep {\varepsilon}
\def \k {\kappa}
\def \l {\lambda}
\def \L {\Lambda}
\def \ek {\mathbf{e}_{k}}
\def \bwk {\mathbf{ W}_{k}}
\def \bpk {\mathbf{\Psi}_{k}}
\numberwithin{equation}{section}
\begin{document}

\title[Nonuniqueness for 2D NSE]{$L^2$-critical nonuniqueness for the 2D Navier-Stokes equations}

\author{Alexey Cheskidov}
\address{Department of Mathematics, Statistics and Computer Science,
University of Illinois At Chicago, Chicago, Illinois 60607}
\email{acheskid@uic.edu}

\author{Xiaoyutao Luo}

\address{Department of Mathematics, Duke University, Durham, NC 27708}

\email{xiaoyutao.luo@duke.edu}

\subjclass[2020]{76D05, 35A02}

\date{\today}

\begin{abstract}
In this paper, we consider the 2D incompressible Navier-Stokes equations on the torus. It is well known that for any $L^2$ divergence-free initial data, there exists a global smooth solution that is unique in the class of $C_t L^2$ weak solutions.  We show that such uniqueness would fail in the class $C_t L^p$ if $  p<2$. The non-unique solutions we constructed are almost $L^2$-critical in the sense that $(i)$ they are uniformly continuous in $L^p$ for every $p<2$; $(ii)$ the kinetic energy agrees with any given smooth positive profile except on a set of arbitrarily small measure in time.
 
\end{abstract}
\maketitle

\section{Introduction}\label{sec:intro}

\subsection{Statement of the problem}
The 2D Navier-Stokes equations are a fundamental mathematical model of fluid flow written as
\begin{equation}\label{eq:NSE}
\begin{cases}
\p_t u -  \Delta u + \D( u \otimes u) + \nabla p = 0 &\\
\D u = 0&\\
u(0) = u_0
\end{cases}
\end{equation}
posed on a spatial domain $\Omega \subset \RR^2$ with a suitable boundary condition, where $u :\Omega \times [0,T]  \to \RR^2 $ is the unknown velocity with initial data $u_0$ and $p : \Omega \times [0,T]  \to \RR  $ is a scalar pressure. We consider the Cauchy problem of \eqref{eq:NSE} on a time interval $[0,T]$ for some initial data $u_0$ and $T>0$. The existence and uniqueness of smooth solutions have been proved by Leray \cite{Leray1933} for $\Omega =\RR^2$, and Ladyzhenskaya \cite{MR0108963} for bounded domains.

In the paper, we focus on the periodic case $\Omega = \TT^2  =\RR^2 /\ZZ^2 $ and the solutions with zero spatial mean
$$
\int_{\TT^2} u(x,t) \, dx = 0,
$$
which is also conserved under the evolution of the equations \eqref{eq:NSE}.

For any divergence-free initial data $u_0 \in L^2(\TT^2)$, a standard Galerkin method leads to  the existence of a weak solution $u \in L^\infty([0,T];  L^2(\TT^2))   \cap L^2([0,T];  H^1(\TT^2))$, and the validity of the energy equality
\begin{equation}\label{eq:energy_equality}
	\frac{1}{2} \|u(t) \|_2^2 + \int_0^t\|\nabla u (s)\|_2^2 \,ds = \frac{1}{2} \|u(0) \|_2^2 \quad \text{for all $t \geq 0$},
\end{equation}
follows from the Onsager criticality of the energy-enstrophy space in 2D. A weak-strong uniqueness argument and  Ladyzhenskaya's inequality imply the regularity and uniqueness of such Leray-Hopf solutions.

In fact, the uniqueness of the 2D Navier-Stokes equation can be stated in a much stronger way. More precisely, Leray-Hopf solutions are unique in a much larger class than the Leray-Hopf class itself. As discussed in \cite{MR1891170},  the result \cite{MR1813331} of Furioli, Lemari\'{e}-Rieusset, and Terraneo implies that in 2D any weak solution in $C_t L^2$ is unique without any additional regularity assumptions:

\begin{theorem}[\cite{MR1813331}]\label{thm:CL2_uniqueness}
For any divergence-free $u_0 \in L^2(\TT^2)$, there is only one weak solution in the class $C_t L^2$ with initial data $u_0$.
\end{theorem}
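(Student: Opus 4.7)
The plan is to combine the existence of a smooth Leray--Hopf solution with a weak--strong uniqueness argument that is delicate because $L^2(\TT^2)$ is the scaling-critical Lebesgue space for the 2D Navier--Stokes equations. First I would invoke the classical theory (Leray, Ladyzhenskaya) to obtain a Leray--Hopf solution $v \in L^\infty_t L^2 \cap L^2_t H^1$ with initial data $u_0$ satisfying \eqref{eq:energy_equality}. By Ladyzhenskaya's inequality $\|f\|_4^2 \lesssim \|f\|_2 \|\nabla f\|_2$ and a standard bootstrap, $v$ is smooth for all positive times and is unique in the Leray--Hopf class.

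The remaining task is to show that any weak solution $u \in C([0,T];L^2(\TT^2))$ starting from $u_0$ coincides with $v$. Following Furioli--Lemari\'{e}-Rieusset--Terraneo, I would first verify that $u$ satisfies the mild (Duhamel) formulation
\[
u(t) = e^{t\Delta}u_0 - \int_0^t e^{(t-s)\Delta}\,\mathbb{P}\,\D\bigl(u(s)\otimes u(s)\bigr)\,ds,
\]
which is legitimate because $u\otimes u\in C_t L^1$ and the heat semigroup $e^{(t-s)\Delta}\mathbb{P}\D$ regularizes to make the integral well defined. Since $v$ (being smooth) also satisfies the same identity, the difference $w = u - v$ satisfies the bilinear relation $w = B(w,u) + B(v,w)$ with $w(0)=0$, where $B(f,g)(t) = -\int_0^t e^{(t-s)\Delta}\,\mathbb{P}\,\D(f\otimes g)\,ds$ is the Oseen bilinear operator.

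The main obstacle is that $L^2(\TT^2)$ is scaling-critical in 2D, so no naive Picard contraction in $L^\infty_t L^2$ closes: the bilinear estimate is exactly borderline and gains no smallness from shrinking the time interval. The key point, and the heart of the FLT-type argument, is to exploit that $u$ is \emph{continuous} in $L^2$, not merely bounded. Splitting
\[
u(t) = e^{t\Delta}u_0 + \bigl[u(t) - e^{t\Delta}u_0\bigr],
\]
the first piece is small in suitable subcritical Kato-type mixed norms $L^q_t L^r_x$ for small $t$ by heat-kernel decay, while the second is small in $L^\infty_t L^2$ because $u(t)\to u_0$ strongly in $L^2$ as $t\to 0^+$. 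Running the bilinear estimate against this decomposition produces a genuine contraction in an appropriate Kato space on some short interval $[0,t_0]$, forcing $w\equiv 0$ on $[0,t_0]$. A standard continuation argument, using smoothness of $v$ on $[t_0,T]$ together with the $L^2$-continuity of $u$, then propagates $w\equiv 0$ to the full interval $[0,T]$, completing the proof.
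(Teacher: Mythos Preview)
Your proposal is correct and follows the original Furioli--Lemari\'e-Rieusset--Terraneo route via the mild formulation and a Kato-type contraction in critical mixed norms, exploiting the strong $L^2$-continuity of $u$ at $t=0$ to manufacture the smallness that the borderline bilinear estimate alone cannot give. The paper, however, presents a genuinely different proof: it uses the duality method of Lions--Masmoudi. Instead of comparing $u$ with the Leray--Hopf solution of the nonlinear equation, the paper compares $u$ with a Leray--Hopf-type solution $v$ of the \emph{linearized} equation $\partial_t v - \Delta v + u\cdot\nabla v + \nabla p = 0$ with $v(0)=u_0$ (for which existence with energy inequality is elementary since the problem is linear). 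Then $w=u-v$ solves a linear transport-diffusion equation with drift $u$ and zero initial data, and one shows $w\equiv 0$ by testing against solutions $\Phi$ of the backward adjoint problem $-\partial_t\Phi-\Delta\Phi-u\cdot\nabla\Phi+\nabla\chi=F$, $\Phi(T)=0$, for arbitrary smooth $F$; the work goes into proving enough regularity of $\Phi$ to justify its use as a test function. Once $u=v$, it follows that $u$ is itself Leray--Hopf, and uniqueness in $C_tL^2$ follows. Your approach keeps the argument inside the semigroup/Picard framework and makes the role of critical scaling and continuity very transparent; the paper's duality method bypasses the mild formulation and Kato norms entirely, reducing everything to energy estimates for a linear dual problem.
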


Concerning the sharpness of this existence/uniqueness result, we ask two questions:
\begin{enumerate}[label=(\alph*)]
    \item Does the existence still hold if initial data $u_0 \in L^p(\TT^2)$ for $p<2$?
    
    \item Are weak solutions in the class $C_t L^p$ unique if $p<2$?
\end{enumerate}
 
The aim of this paper is to provide a positive answer to the first question, and a negative answer to the second one. In fact, the main results in this paper will be stated in wider Sobolev spaces, but we choose the usual Lebesgue spaces $L^p$ here as the most elementary and significant. For instance, the space  $  L^2 (\TT^2)$ is invariant under the natural scaling of the Navier-Stokes equations   $ u( x,  t ) \to u_\l( \l x, \l^2 t)$, and the square of $L^2(\TT^2)$ norm represents the total kinetic energy, which is nonincreasing in time for smooth solutions due to \eqref{eq:energy_equality}.

It is easy to see that when $p<2$, the regularity $ u\in C_t L^p$ itself alone is not enough to make sense of the weak formulation. By a weak solution of \eqref{eq:NSE}, we mean a vector field $ u \in L^2 (  \TT^2\times [0,T])$ satisfying all of the following conditions:
\begin{enumerate}[label=(\arabic*)]

\item For $a.e \, t\in [0,T]$, $ u(\cdot, t)$ is weakly divergence-free;

\item For any  $\varphi \in \mathcal{D}_T $,
\begin{equation}
  \int_{\TT^2} u_0(x) \varphi( \cdot  , t  ) \, dx + \int_0^T \int_{\TT^2} u\cdot (\Delta \varphi + u\otimes u : \nabla \varphi - u \cdot \partial_t \varphi ) \, dx dt =0,
\end{equation}
 
\end{enumerate}
where $\mathcal{D}_T \subset  C^\infty (  \TT^2\times \RR )$ is the space of all divergence-free  test functions vanishing on $ t\geq T$.
 
The above definition seems too weak at first glance. However, by \cite[Theorem 2.1]{MR316915}, the above definition of weak solutions is equivalent to the integral equation
\begin{equation}\label{eq:mild_formulation}
u(t) = e^{t\Delta}u_0 - \int_0^t e^{(t-s)\Delta} \mathcal{P}\D(u\otimes u) (s)\, ds ,
\end{equation}
where $ e^{t\Delta}$ is the heat semigroup and $\mathcal{P}$ is the Leray projection onto the divergence-free vector fields. This equation \eqref{eq:mild_formulation} is often referred to as the \emph{mild formulation} of the Navier-Stokes equations and has been used to construct unique local solutions, also known as the \emph{mild solutions}, when the initial data is critical or subcritical. We will discuss this in the summary below.

The following theorem is one of the main results of this paper, which shows both the existence and nonuniqueness of weak solutions in $C_t L^p $ for $p< 2$.

\begin{theorem} \label{thm:Energy_Profile_short}
Fix $1  \leq p <2$ and let $u_0 \in L^p(\TT^2)$ be divergence free and $e :[0,T] \to \RR^+$ be strictly positive and smooth.

Then for any $\ep>0$, there exist a weak solution $u$ with initial data $u(0) =u_0$ and regularity 
\begin{equation*}
u \in C([0,T]; L^{p }(\TT^2)) \quad \text{and}\quad u \in C((0,T]; L^{p'}(\TT^2))  \quad\text{for all $p'<2$},
\end{equation*}
and a Borel set $\mathcal{G} \subset [0,T]$ with $\mathcal{L}^1([0,T] \setminus \mathcal{G})  < \ep$   such that
\[
\|u(t)\|_{L^2(\TT^2)}^2  = e(t), \qquad \forall t \in \mathcal{G}.
\]
\end{theorem}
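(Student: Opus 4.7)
I would prove Theorem~\ref{thm:Energy_Profile_short} by a convex integration scheme adapted to the 2D Navier–Stokes system with \emph{temporally intermittent} building blocks. The idea is to construct a sequence of smooth approximations $(u_q,p_q,R_q)$ satisfying the Navier–Stokes–Reynolds system
\[
\partial_t u_q - \Delta u_q + \D(u_q \otimes u_q) + \nabla p_q = \D R_q, \qquad \D u_q = 0,
\]
with $R_q \to 0$ in a suitable weak topology and $u_q \to u$ in $C_tL^p$. The starting iterate $u_0$ is taken to be the unique Leray–Hopf solution from Theorem~\ref{thm:CL2_uniqueness} (so $R_0 \equiv 0$), and every perturbation $w_{q+1} = u_{q+1}-u_q$ is supported away from $t=0$ via a temporal cutoff, so that the initial data is frozen throughout the iteration.

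\textbf{Building blocks and the inductive step.} Because one cannot obtain two-directional spatial intermittency in 2D, I would compensate with heavy temporal concentration. The perturbations take the form
\[
w_{q+1}(x,t) = \sum_{k} a_k(x,t)\, g_k(\mu t)\, \Phi_k(\lambda x)\, \ek \;+\; (\text{divergence and temporal correctors}),
\]
where $\Phi_k$ is a one-dimensional shear profile concentrated in a strip of relative width $\sigma$, and $g_k$ is a fast $\sigma$-intermittent temporal profile normalized so that $\fint g_k^2 = 1$ and $\|g_k\|_{L^r_t}$ degenerates as $\sigma \to 0$ for $r<2$. The amplitudes $a_k$ are chosen via the standard geometric lemma so that
\[
\sum_k a_k^2 \,\Phi_k \otimes \Phi_k \;\approx\; \rho(x,t)\,\Id - R_q,
\]
where $\rho$ is a smooth scalar engineered to make $\|u_{q+1}(t)\|_2^2$ track $e(t)$ on a prescribed set. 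The new Reynolds stress $R_{q+1}$ is then obtained by inverting the divergence on the various quadratic, linear, and corrector errors, using an inverse-divergence operator that gains derivatives on high-frequency pieces.

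\textbf{Parameter balance and energy control.} The key inequalities to close the induction are $L^p$-type estimates of the form
\[
\|w_{q+1}\|_{C_t L^p} \lesssim \sigma^{1/p-1/2}\|a\|_{C_t L^2}\,\lambda^{\text{(small)}},
\]
which go to zero as $\sigma \to 0$ for $p<2$, contrasted with the $L^2$ identity
\[
\big|\|w_{q+1}(t)\|_{L^2}^2 - \textstyle\int \rho(x,t)\,dx\big| \ll 1
\]
that holds off a small-measure exceptional set in time (coming from the support of $g_k$). Defining $\mathcal{G}$ as the intersection over $q$ of the active time intervals, and arranging the exceptional measures to sum to less than $\ep$, yields the desired energy identity on $\mathcal{G}$. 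The parameters $(\lambda_q,\mu_q,\sigma_q)$ are chosen in a very tight window, typically $\mu_q = \lambda_q^{1+\delta}$ and $\sigma_q = \lambda_q^{-\gamma}$, so that the Reynolds stress $R_{q+1}$ is summably smaller than $R_q$ in $L^1_{t,x}$, while $w_{q+1}$ remains summable in $C_tL^p$. The improved regularity $u \in C((0,T];L^{p'}(\TT^2))$ for all $p' < 2$ follows a posteriori by combining the $C_tL^p$ convergence with parabolic smoothing and the mild formulation \eqref{eq:mild_formulation} away from $t=0$.

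\textbf{Main obstacle.} The principal difficulty is that the 2D geometry forbids Mikado-type flows, forcing all intermittency gains to come from the single spatial direction transverse to the shear together with the time variable. This makes the error term $\partial_t w_{q+1}$ unusually large and couples it tightly to $\mu$, so the "Nash" cancellation of $R_q$ and the dissipation error from $\Delta w_{q+1}$ leave only a very narrow admissible range of exponents. Closing the iteration estimates in this narrow window — and simultaneously arranging the temporal cutoffs so that both the initial condition and the energy profile on $\mathcal{G}$ are preserved at every stage — is the delicate part; I expect the proof to hinge on a careful temporal corrector (to absorb $\partial_t w_{q+1}$ into a divergence, analogous to the Cheskidov–Luo temporal corrector) and on an inverse-divergence operator tailored to one-dimensional oscillatory profiles.
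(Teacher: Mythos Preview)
Your proposal has the right high-level architecture (Navier--Stokes--Reynolds iteration, temporal intermittency, geometric lemma, temporal corrector), but the choice of building blocks is a genuine gap that prevents the scheme from closing for $p$ close to $2$.

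You assert that ``one cannot obtain two-directional spatial intermittency in 2D'' and therefore work with one-dimensional shear profiles supported in strips. This is precisely the misconception the paper overcomes. With a 1D shear $W_k$ of transverse concentration $\mu$ (so $\|W_k\|_{L^p}\sim\mu^{1/2-1/p}$) and temporal concentration $\kappa$, the two constraints you must satisfy simultaneously are
\[
\|w\|_{L^\infty_tL^p}\sim\kappa^{1/2}\mu^{1/2-1/p}\ll 1
\qquad\text{and}\qquad
\|\mathcal{R}\Delta w\|_{L^1_tL^r}\sim\kappa^{-1/2}\sigma\mu^{3/2-1/r}\ll 1.
\]
Multiplying these gives $\sigma\mu^{2-1/r-1/p}\ll 1$; for $r$ near $1$ and $p$ near $2$ the exponent is about $1/2$, which is impossible since $\sigma,\mu\ge 1$. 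The paper instead uses \emph{accelerating jets} concentrated on moving balls of dimensions $\nu^{-1}\times\mu^{-1}$, giving genuine 2D spatial intermittency $\|W_k\|_{L^p}\sim(\nu\mu)^{1/2-1/p}$. The price is that $\D(W_k\otimes W_k)\neq 0$, but letting the jets travel along geodesics with velocity $\omega g_k$ converts this error into the time derivative of a small acceleration corrector $w^{(a)}$ via the identity $\partial_t|\mathbf{W}_k|^2\ek=\sigma^{-1}\omega g_k\,\D(\mathbf{W}_k\otimes\mathbf{W}_k)$. This mechanism, not merely a ``temporal corrector analogous to Cheskidov--Luo'', is what makes the 2D endpoint reachable.

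Two secondary issues: (i) your starting iterate cannot be the Leray--Hopf solution when $u_0\in L^p$ with $p<2$, since none exists; the paper takes $u_1=e^{-\nu t(-\Delta)^{3/2}}u_0$ and computes the associated Reynolds stress explicitly. (ii) The regularity $u\in C((0,T];L^{p'})$ for all $p'<2$ does not come from parabolic smoothing of the mild formulation (these are not mild solutions); it comes from arranging the perturbations $w_n$ to lie in $L^\infty_tL^{p_n}$ with $p_n\uparrow 2$, summably. Finally, the energy profile is not encoded in the convex-integration amplitude $\rho$ as you suggest; the paper runs a separate energy-correction step between iterations, adding a small stream-function corrector $\overline{w}_n$ that pumps the energy to the target level on the good set before the next error-reduction step.
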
 

Theorem \ref{thm:Energy_Profile_short} contributes to both the existence and nonuniqueness in the  class $ C_t L^p $ for $1 \leq p <2$.  Thanks to the Lusin-type property of the kinetic energy of the constructed solutions, Theorem \ref{thm:Energy_Profile_short} is sharp in both of the following two aspects.
\begin{itemize}
    \item The regularity $C_t L^p$ for any $1 \leq p <2$  is sharp: if $p=2$, then any $C_t L^2$ weak solution is unique in the class $C_t L^2$ with the same initial data;
    \item The size of set $\mathcal{G}$ is optimal, namely,  $[0,T]\setminus \mathcal{G}$ can not be of measure zero: if $\mathcal{L}^1([0,T] \setminus \mathcal{G}) = 0$ and $\|e\|_{L^\infty}$ were small, then $u$ would have a small $L^\infty_t L^2$ norm, which implies uniqueness~\cite{MR1891170}. 
\end{itemize}

\subsection{Summary of existence and uniqueness results}
Before we introduce the main theorems, let us review the existing existence and uniqueness results for the 2D Navier-Stokes equations.

Since the classical work of Leray and Ladyzhenskaya, there has been a vast amount of literature on extending the existence and uniqueness results to a wider class of solutions.

\subsubsection*{Existence}
Since the global existence of Leray-Hopf solutions \cite{MR1555394}, the existence theory for 2D Navier-Stokes equations mainly focused on two aspects: using the mild formulation for initial data in scale-invariant spaces or using the vorticity formulation in the case where the initial vorticity is a measure.

Using the mild formulation \eqref{eq:mild_formulation} for the velocity, in the celebrated work \cite{MR1808843}  Koch and Tataru showed the existence of a unique solution for small initial data $u_0 \in \text{BMO}^{-1}$ in dimensions $d \geq 2$. In 2D, Gallagher and Planchon \cite{MR1891170} constructed global unique solutions in scale invariant Besov spaces $u_0 \in B^{-1+2/p}_{p,q}$ for any finite $p,q$. This was later extended by Germain \cite{MR2135239} to $u_0 \in \text{VMO}^{-1} $, where the space $\text{VMO}^{-1} $ is the closure of Schwartz functions in $\text{BMO}^{-1} $. We also mention related works \cite{MR968416,MR1860126,MR1946395} and refer to ~\cite{MR2768550,MR3469428} for the background in Fourier analysis in line of research.

At the level of vorticity $\omega =\curl u$, the critical scaling corresponds to $\omega_0 \in L^1$, which suggests the consideration of measure-valued vorticity. The first existence result in this direction dates back to \cite{MR794825,MR1017289} where the existence of weak solutions was shown when $\omega_0 $ is a Radon measure. We refer to \cite{MR1308857,MR853597,MR1270113} and reference therein for this line of research. Heuristically, such results would correspond to the velocity of scaling $u_0 \in B^{-1+ 2/p}_{p,q}$
for $p\leq 2$ and $q=\infty$.

\subsubsection*{Uniqueness}

For the velocity formulation, the known uniqueness results, which are usually corollaries of the existence results, can be summarized as follows: if $u  \in C_t B^{-1+2/p}_{p,q}$ with finite $p,q$, then $u$ is unique in the class of weak solution with regularity $  C_t B^{-1+2/p}_{p,q}$. It should be noted that even though the existence results in \cite{MR1808843,MR2135239} surpass \cite{MR1891170}, the uniqueness in \cite{MR1808843,MR2135239} can not be stated with only one continuity condition in time.

The uniqueness of the vorticity formulation however does not follow from the construction of the solutions. Indeed, in \cite{MR794825,MR1017289,MR1308857,MR853597,MR1270113}, the uniqueness results all have either smallness or structural assumptions. Building upon \cite{MR2176270}, this problem has finally been settled in \cite{MR2178064} : if $\omega_0 \in \mathcal{M}$, then there is only one solution $\omega(t)$ which is continuous in $L^1\cap L^\infty$ for $t>0$.
 
The existence and uniqueness of weak solutions for the 2D Navier-Stokes equations have more or less matured except at the endpoint $B^{-1 }_{\infty,\infty}  $, which is the largest critical space with respect to the scaling the Navier-Stokes equations, independent of the space dimensions. The current existence results in this space require extra assumptions (see for instance \cite{MR2290141,MR2776367,MR3177282} and reference therein). It is also not clear at the moment whether weak solutions in $C_t B^{-1 }_{\infty,\infty}$ are unique, though other types of ill-posedness results are available in dimensions $d \geq 3$ \cite{MR2473255,MR2473256,MR2566571}.

Supported by the development of the existence and uniqueness theories so far, we can summarize the existence and uniqueness in a non-rigorous principle:
\begin{center}
    \textit{Existence and uniqueness may hold in the critical and sub-critical regime.}
\end{center}
 
This heuristic principle has also been proven quite successful in view of the recent development of the negative results thanks to the convex integration technique.

\subsection{Convex integration technique}

Convex integration is a technique originated in isometric embedding problems of geometry dating back to the work of Nash \cite{MR0065993} and Kuiper \cite{MR0075640}. Its application to fluid dynamics was brought to attention only in 2009 by the pioneering work of De Lellis and Székelyhidi Jr. \cite{MR2600877}. Since its inception in \cite{MR2600877}, this technique has been proven very fruitful in the fluids community. Remarkably, its development over a series of works \cite{MR2600877,MR3090182,MR3254331,MR3374958,MR3530360,1701.08678} has culminated in the resolution of the Onsager conjecture for the 3D Euler equations by Isett
\cite{MR3866888}. Its application to other models is a very active research direction, and we refer readers to \cite{MR2813340,MR3479065,MR3987721,MR4138227,MR4126319,MR4105741,MR4198715,2101.09278} and surveys \cite{MR3929468,MR4188806} reference therein for other interesting work on convex integration.

Very recently, in a groundbreaking work \cite{MR3898708}, Buckmaster and Vicol successfully applied an ``intermittent'' variant of this technique to construct non-unique weak solutions of the 3D Navier-Stokes equations. Following this breakthrough development, there have been multiple results in viscous settings, such as extensions of \cite{MR3898708} to the hyperdissipative case~\cite{1808.07595,MR4097236}, non-unique weak solutions with partial regularity in time~\cite{1809.00600}, nonconservative $H^{1/2-}$ solutions of the Euler equations \cite{2101.09278}, existence of nontrivial stationary weak solutions of $d$-dimensional Navier-Stokes equations for $d\geq 4$ \cite{MR3951691}, nonuniqueness of Leray-Hopf solutions for hypodissipative Navier-Stokes equations~\cite{MR3843425,MR3941228}, Hall-MHD~\cite{1812.11311}, and some power-law flows~\cite{arXiv:2007.08011} in various solution classes where the Leray structure theorem does not hold.

Ultimately, any attempt of using the  strategy of \cite{MR3898708}  to extend the nonuniqueness  to the 2D Navier-Stokes equations would face substantial difficulty. In a nutshell, this is because the spatially intermittent framework of \cite{MR3898708} requires the underlined PDE to be at least $C_t L^2$-supercritical, which, in 2D, is a contradiction to Theorem \ref{thm:CL2_uniqueness}. In \cite{2004.09538,2009.06596}, the authors of the current paper developed a space-time intermittent variant of the framework of \cite{MR3898708} which allows for more flexibility in the scaling of the nonuniqueness range. Taking advantage of the added temporal intermittency, in \cite{2009.06596} we constructed nonunique weak solutions  to the Navier-Stokes equations in dimensions $d\geq 2$ in $L^p_t L^\infty$ for any $p<2$, proving the first nonuniqueness result for the 2D Navier-Stokes equations and the sharpness of the classical $L^2_t L^\infty$ Ladyzhenskaya-Prodi-Serrin criterion.

\begin{figure}[ht]
\centering 
\begin{subfigure}[b]{0.4\linewidth}
\begin{tikzpicture}[scale=0.75, every node/.style={transform shape}]

\draw[->] (0,0) -- (5.5,0) node[anchor=north] {\scriptsize Time scaling $\frac{1}{q}$};
\draw[->] (0,0) -- (0,5) node[anchor=south ] {\scriptsize Space scaling $\frac{1}{p}$};

\filldraw	(0,0) circle (1pt) node[anchor=north] {\scriptsize  $L^\infty_{t,x}$}
        (4,2) circle (1pt) node[anchor=north] {\scriptsize $L^2_t H^{1} \subset L^2_t L^{\frac{2d}{d-2}}$};
\draw (4,0) circle (1pt) node[anchor=north] {\scriptsize $L^2_tL^\infty$}
		(0,2.5) circle (1pt) node[anchor=east] {\scriptsize $C_tL^d$};

\filldraw	
        (0,3.9) circle (1pt)
        (0,3.7) circle (1pt) 
        (0,4) circle (1pt) node[anchor=east] {\scriptsize $L^\infty_t L^2$};

\draw[->,orange] (1.2,0.9) node[anchor=north]{
\renewcommand{\arraystretch}{0.55}
{\begin{tabular}{|c|} 
 \hline
 {\tiny Uniqueness threshold:}   \\ 
 {\tiny$\frac{2}{p} + \frac{2}{q} = 1$}\\
 \hline
\end{tabular}}
} .. controls +(up:0.5cm)  .. (1.5,1.5);

\draw[->,orange] (3.5,3.2) node[anchor=south]{{\tiny Leray-Hopf scaling}} -- (2.7,2.7);

\draw[->]	(1,5) node[anchor=west] {\tiny \protect\cite{MR3898708}, $d=3$} -- (0.05,4.06);
\draw[->]   (1.2,4.5) node[anchor=west] {\tiny \protect\cite{1809.00600}, $d=3$} -- (0.05,3.93);
\draw[->]    (1.2,3.8) node[anchor=west] {\tiny \protect\cite{MR3951691}, $d=4$} -- (0.05,3.7);
 \draw[->]    (4.5,0.5) node[anchor=west] {\tiny \protect\cite{2009.06596}, $d \geq 2$} -- (4.05,0.05);
 
\draw[->,red]    (0.5,2.5) node[anchor=west] {\tiny [Theorem \ref{thm:Energy_Profile_short}, $d=2$]} -- (0.1,2.5);

\draw[thick,blue] (0,2.5) -- (4,0);

\draw[thick,dashed,blue] (0,4) -- (4,2);
\end{tikzpicture}
\caption{$d \geq 2$. The Leray-Hopf line might be higher than $C_t L^d$.}
\end{subfigure}
\hspace{0.5cm}
\begin{subfigure}[b]{0.4\linewidth}
\begin{tikzpicture}[scale=0.75, every node/.style={transform shape}]

\draw[->] (0,0) -- (5.5,0) node[anchor=north] {\scriptsize Time scaling $\frac{1}{q}$};
\draw[->] (0,0) -- (0,5) node[anchor=south] {\scriptsize Space scaling $\frac{1}{p}$};

\filldraw	(0,0) circle (1pt) node[anchor=north] {\scriptsize  $L^\infty_{t,x}$};
\draw (4,0) circle (1pt) node[anchor=north] {\scriptsize $L^2_tL^\infty$};

\draw	
		(0,2.5) circle (1pt) node[anchor=east] {\scriptsize $C_t L^2$};


\draw[->,orange] (1.2,0.9) node[anchor=north]{
\renewcommand{\arraystretch}{0.55}
{\begin{tabular}{|c|} 
 \hline
 {\tiny Uniqueness threshold:}   \\ 
 {\tiny$\frac{2}{p} + \frac{2}{q} = 1$}\\
 \hline
\end{tabular}}
} .. controls +(up:0.5cm)  .. (1.5,1.5);
 
 
 \draw[->]    (4.5,0.5) node[anchor=west] {\tiny \protect\cite{2009.06596}} -- (4.05,0.05);
 
\draw[->,red]    (0.5,3) node[anchor=west] {\tiny [Theorem \ref{thm:Energy_Profile_short}]} -- (0.05,2.55);

\draw[->,orange] (3.5,1.5) node[anchor=south]{{\tiny Leray-Hopf scaling}} -- (2.6,0.95);

\draw[thick,blue] (0,2.5) -- (4,0);

\end{tikzpicture}
    \caption{$d = 2$. The Leray-Hopf scaling coincides with the critical scaling.}
\end{subfigure}
\caption{Uniqueness/nonuniqueness in $L^p_t L^q$ scale}
\label{fig:my_label}
\end{figure}
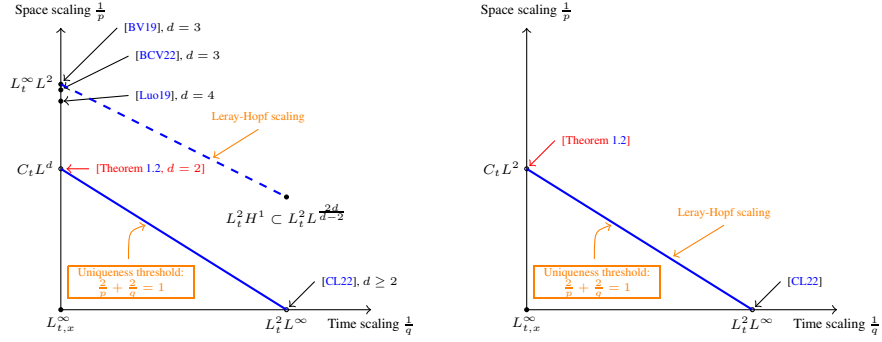 

While the results in \cite{2009.06596} demonstrate the sharpness of the Ladyzhenskaya-Prodi-Serrin criteria $L^p_tL^q$, $\frac{2}{p}+\frac{d}{q}=1$, at the $q=\infty$ endpoint, it remains open whether the rest of the cases in these uniqueness criteria are sharp, as shown in Figure \ref{fig:my_label}. In the current paper, we are able to reach the other endpoint $q=d$ in the case of two dimensions $d=2$.

\subsection{Main results}
In this subsection, we list additional results, complementary to Theorem~\ref{thm:Energy_Profile_short}, that follow directly from the construction.

\subsubsection{Density of energy near $t=0$}
First, in addition to the Lusin-type property for the kinetic energy, we have the density property \eqref{eq:density_L_2} at $t=0$, which we state in the case of $L^2$ initial data for simplicity.

\begin{theorem} \label{thm:Main-L^2}
Let $u_0 \in L^2(\TT^2)$  be divergence free and $e :[0,T] \to \RR^+$ be strictly positive, smooth and such that $e(0) =\|u_0\|_2$.

There exist a weak solution with initial data $u(0) =u_0$ and regularity
\begin{equation*}
u \in
C([0,T]; L^{p'}(\TT^2)) \quad\text{for all $p'<2$},
\end{equation*}
and a Borel set $\mathcal{G} \subset [0,T]$ such that
\[
\|u(t)\|_{L^2}^2 = e(t), \qquad \forall t \in \mathcal{G},
\]
and the density of $\mathcal{G}$ at $t=0$ is $1$:
\begin{equation}\label{eq:density_L_2}
\lim_{t \to 0+} t^{-1}\mathcal{L}^1(\mathcal{G}\cap [0,t]) =1.
\end{equation}

\end{theorem}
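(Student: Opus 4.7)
The plan is to iterate Theorem~\ref{thm:Energy_Profile_short} on a dyadic sequence of subintervals accumulating at $t=0$ and then concatenate the pieces. Set $T_n=2^{-n}T$ and $J_n=[T_{n+1},T_n]$, so that $(J_n)_{n\ge 0}$ tiles $(0,T]$ with $|J_n|=2^{-n-1}T$, and pick a tolerance sequence $\epsilon_n\downarrow 0$.

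Inductively on $n$ I would build $u$ on $J_n$. On $J_0=[T/2,T]$, pick any smooth divergence-free $v_0$ with $\|v_0\|_2^2=e(T/2)$ and apply the time-shifted Theorem~\ref{thm:Energy_Profile_short} with initial datum $v_0$, profile $e|_{J_0}$, and bad-set tolerance $\epsilon_0|J_0|$. Having defined $u$ on $\bigcup_{k<n}J_k$, evaluate $u(T_n)\in L^{p'}(\TT^2)$ for every $p'<2$ (interior regularity in Theorem~\ref{thm:Energy_Profile_short}) and reapply Theorem~\ref{thm:Energy_Profile_short} on $J_n$ with initial datum $u(T_n)$, profile $e|_{J_n}$, and tolerance $\epsilon_n|J_n|$. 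Let $\mathcal{G}_n\subset J_n$ be the resulting good set, set $\mathcal{G}=\bigcup_n\mathcal{G}_n$, and extend by $u(0)=u_0$. Concatenation preserves the weak formulation via the mild identity~\eqref{eq:mild_formulation} and the matching data at each $T_n$.

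For the density statement, only indices $n$ with $T_{n+1}\le t$ contribute to $[0,t]\setminus\mathcal{G}$. Writing $n_*(t)=\lceil\log_2(T/t)\rceil-1$, one has $\sum_{n\ge n_*(t)}|J_n|\le 2t$, so
\[
\bigl|[0,t]\setminus\mathcal{G}\bigr|\;\le\;\sum_{n\ge n_*(t)}\epsilon_n|J_n|\;\le\;2t\,\sup_{n\ge n_*(t)}\epsilon_n\;=\;o(t),\qquad t\to 0^+,
\]
which gives~\eqref{eq:density_L_2}. Continuity of $u$ in $L^{p'}$ on each $J_n$ and across each gluing time $T_n$ is inherited from Theorem~\ref{thm:Energy_Profile_short} and the matching data.

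The main obstacle is continuity at $t=0$ in $L^{p'}$ for every $p'<2$, which is strictly stronger than what Theorem~\ref{thm:Energy_Profile_short} asserts at an initial time. Since $\|u(t)\|_2^2=e(t)$ on $\mathcal{G}$ and $e$ is continuous with $e(0)=\|u_0\|_2^2$, along any $t_k\to 0^+$ inside $\mathcal{G}$ one has $\|u(t_k)\|_2\to\|u_0\|_2$; combined with weak-$L^2$ convergence $u(t)\rightharpoonup u_0$ extracted from the mild formulation~\eqref{eq:mild_formulation}, this upgrades to strong $L^2$-convergence and hence to $L^{p'}$-convergence along such sequences. Promoting this to convergence along \emph{all} $t\to 0^+$ requires an $L^2$-bound $\|u(t)\|_2\le\|e\|_{L^\infty}^{1/2}+o(1)$ that also holds on $[0,T]\setminus\mathcal{G}$, which is not a consequence of the statement of Theorem~\ref{thm:Energy_Profile_short} and must be read off from the convex-integration \emph{proof} by controlling the $L^2$-size of the perturbations inside the temporal cutoff regions.
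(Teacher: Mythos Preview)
Your scheme has two structural gaps that prevent it from producing a weak solution with the prescribed initial datum $u_0$.

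First, the gluing runs in the wrong direction for a forward Cauchy problem. Theorem~\ref{thm:Energy_Profile_short} takes data at the \emph{left} endpoint of an interval and constructs a solution forward in time; on $J_n=[T_{n+1},T_n]$ you would need the value at the \emph{right} endpoint $T_n$ to match what has already been built on $[T_n,T]$, while the initial value at $T_{n+1}$ is free. Theorem~\ref{thm:Energy_Profile_short} gives no control of the terminal value of its solution, and Navier--Stokes cannot be solved backward, so there is no mechanism to make the pieces match at the interfaces $T_n$.

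Second, and more fundamentally, the datum $u_0$ never enters your construction: you start at $T/2$ with an arbitrary $v_0$ and build toward $0$, then simply declare $u(0)=u_0$. Setting the value at one point does not make $u$ a weak solution with that initial datum. Your proposed fix---extract weak $L^2$ convergence to $u_0$ from the mild formulation~\eqref{eq:mild_formulation} and upgrade using $\|u(t_k)\|_2\to\|u_0\|_2$ along good times---is circular: the mild formulation with $u_0$ is \emph{equivalent} to the weak formulation with initial datum $u_0$, so you cannot invoke it before you have established that $u$ is such a solution.

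The paper proceeds differently and avoids both problems. It does \emph{not} iterate Theorem~\ref{thm:Energy_Profile_short} as a black box; instead it runs the main iteration Proposition~\ref{prop:main} directly, starting from $u_1(t)=e^{-\nu t(-\Delta)^{3/2}}u_0$ so that the initial datum is built in from the outset. Every subsequent perturbation $w_n$ is supported in $(t_{n+1},1]$ with $t_n\downarrow 0$, hence $u(0)=u_1(0)=u_0$ automatically and $C_tL^{p'}$ continuity at $0$ is inherited from $u_1$. The density statement~\eqref{eq:density_L_2} then follows from the quantitative estimate $\mathcal{L}^1([0,t]\cap E_n)\le t\,\delta_n$ in Proposition~\ref{prop:main}, which is exactly what is needed to show that the contribution of the bad sets near $0$ is $o(t)$; this refinement of the bad-set estimate is not visible at the level of Theorem~\ref{thm:Energy_Profile_short}.
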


\subsubsection{Extension to rough initial data}
Our method of constructing weak solutions allows for initial data $u_0 $ to be much rougher than $L^1(\TT^2)$. Such a supercritical regularity persists over the whole time interval and is supplemented with an additional smoothing effect, a gain of almost two derivatives.

\begin{theorem} \label{thm:KT_sharpness}
For any divergence free $u_0 \in W^{s,1}(\TT^2)$ for some $s>-1$ there are infinitely many solutions in the class
$$
u \in C((0,T]; W^{s',1}(\TT^2))  \quad\text{for all $s'<1$}.
$$
\end{theorem}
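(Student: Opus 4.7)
The plan is to combine the parabolic smoothing of the heat semigroup for the linear part with the convex integration machinery underlying Theorem~\ref{thm:Energy_Profile_short}. I would look for a solution of the form $u(t)=y(t)+v(t)$, where $y(t):=e^{t\Delta}u_0$ absorbs the rough initial data and $v$ is a convex integration perturbation that makes $u$ solve \eqref{eq:NSE} in the weak sense. The infinitely many branches come from the flexibility of the scheme.

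The first ingredient is the heat smoothing estimate: for $u_0\in W^{s,1}(\TT^2)$ with $s>-1$ and any $\sigma\ge s$,
\[
\|e^{t\Delta}u_0\|_{W^{\sigma,1}}\;\lesssim\; t^{-(\sigma-s)/2}\,\|u_0\|_{W^{s,1}},\qquad t>0,
\]
so $y\in C^\infty(\TT^2)$ for every $t>0$, $y\in C((0,T];W^{s',1})$ for every $s'<1$, and the blow-up as $t\to0^+$ is integrable in time precisely because $s>-1$. Plugging $u=y+v$ into \eqref{eq:NSE} and using that $y$ already solves the heat equation, $v$ must satisfy
\[
\p_t v-\Delta v+\D(v\otimes v+v\otimes y+y\otimes v)+\D(y\otimes y)+\nabla p=0,\qquad \D v=0.
\]
On any sub-interval $[\tau,T]$ with $\tau>0$, the drift $y$ and the source $\D(y\otimes y)$ are smooth, so the iterative convex integration scheme of Theorem~\ref{thm:Energy_Profile_short} can be run on $[\tau,T]$ with $v(\tau)=0$, treating $-y\otimes y$ as part of the starting Reynolds stress. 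Distinct choices of the free parameters at the first stage (or on different subintervals) yield infinitely many solutions on $[\tau,T]$. Finally, I would pass $\tau_n\to 0^+$ with quantitative $W^{s',1}$ bounds on $v_n$ that remain compatible with the integrable blow-up of $y$ at the origin; a diagonal/compactness argument produces a single weak solution $u=y+v$ on $(0,T]$ in $C((0,T];W^{s',1})$ for every $s'<1$, the initial condition being attained only distributionally, as the statement allows.

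The main obstacle is controlling the low-regularity source $\D(y\otimes y)$ near $t=0$: the norm $\|y(t)\otimes y(t)\|$ in the Banach space natural to the Reynolds stress has a singular time weight of order $t^{s-\text{O}(1)}$, which is only barely integrable under the hypothesis $s>-1$; this is precisely where the threshold in the theorem is sharp. Every quantitative estimate in the scheme — Reynolds-stress stability, smallness of the perturbations measured in $W^{s',1}$, and continuity of the limit for $t>0$ — must therefore be set up with temporal weights degenerating at $0$, and the frequency/amplitude/temporal-intermittency parameters tuned so that the weighted bounds close. That the scheme of Theorem~\ref{thm:Energy_Profile_short} already furnishes regularity up to almost one derivative in $L^1$ (matching the gain afforded by the heat semigroup between $W^{-1,1}$ and $W^{1,1}$) is what pins down the endpoint $s'<1$ in the conclusion.
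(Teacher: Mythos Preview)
Your overall architecture is the same as the paper's: linearise around a semigroup evolution of the rough datum, put the resulting quadratic term into an initial Reynolds stress, and then run the iteration of Proposition~\ref{prop:main} on closed intervals $[t_n,1]$ shrinking toward $0$. The gap is in the semigroup you pick.

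With $y(t)=e^{t\Delta}u_0$ the linear part $\partial_t y-\Delta y$ vanishes identically, so the starting Reynolds stress is exactly $y\mathring\otimes y$ and the scheme needs $\int_0^T\|y(t)\|_{L^2}^2\,dt<\infty$. For $u_0\in W^{s,1}(\TT^2)$ the standard heat smoothing gives only
\[
\|e^{t\Delta}u_0\|_{L^2(\TT^2)}\;\lesssim\; t^{(s-1)/2}\,\||\nabla|^{s}u_0\|_{L^1(\TT^2)},
\]
hence $\|y(t)\|_{L^2}^2\lesssim t^{\,s-1}$, which is integrable near $t=0$ only for $s>0$. Your claim that this singularity is ``only barely integrable under the hypothesis $s>-1$'' is therefore off by a full unit in the exponent: for $-1<s\le 0$ the initial Reynolds stress is not in $L^1_{t,x}$ and the iteration cannot start. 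The paper cures this (Theorem~\ref{thm:universality}) by taking instead $y(t)=e^{-\nu t(-\Delta)^{\alpha}}u_0$ with $\alpha\ge 3/2$; the higher order slows every singular power of $t$ by a factor $1/\alpha$, at the price of a now nonzero but harmless linear contribution $\mathcal{R}(\partial_t y-\Delta y)$ to the stress, and one checks that all pieces lie in $L^1([0,1]\times\TT^2)$ for the full range $s>-1$.

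A secondary point: the diagonal/compactness step you sketch is unnecessary. Once the initial stress is in $L^1_{t,x}$, one applies Proposition~\ref{prop:main} on $[t_n,1]$ with $t_n\downarrow 0$, the perturbations $w_n$ being defined on all of $[0,1]$ and supported away from $0$; the series $u_1+\sum_n w_n$ then converges directly in $C((0,T];W^{s',1})$ for every $s'<1$ by the estimate in Remark~\ref{rem:Main_Iterarion_Remark}(2), with no subsequence extraction required. The infinitely many solutions come from the freedom in the energy profile (or equivalently in the convex integration parameters at each stage), exactly as you suggest.
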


\begin{remark} \label{remark:density_roughtID}
A few remarks are in order.
\begin{enumerate}
\item The density estimate \eqref{eq:density_L_2} is a much stronger result than the epsilon result in Theorem \ref{thm:Energy_Profile_short}: the kinetic energy of the solution will look more and more like $e(t)$ as $t \to 0+$.

\item For every constructed solution,
there is an increasing sequence of sets $\mathcal{T}_j \subset [0,1]$ with $\mathcal{L}^1(( \cup_j \mathcal{T}_j)^c  )=0$, such that $u$ continuously maps $\mathcal{T}_j \to L^2(\TT^2)$ for every $j \in \NN$. Here the sets $\mathcal{T}_j$ are equipped with the induced Euclidean metric.

\item  Note that $u \in C((0,T]; W^{s,1}(\TT^2))$ for all $s<1$. In addition, since the building blocks are fully intermittent in space, a similar argument implies that the vorticity $\omega=\curl u\in C((0;T];L^{p})$ for all $p<1$, which means that it is ``almost'' in $L^1 $,  consistent with the uniqueness at the level of the vorticity formulation.

\item All solutions constructed in this paper are nowhere regular in time, namely, there does not exist an interval $I\subset [0,T]$ such that $u |_{I} \in C^\infty(I \times \TT^2) $. However, if one drops the control on the kinetic energy, one can obtain solutions with a small singular set in time (in the sense of Hausdorff dimension) using the gluing procedure in \cite[Section 3]{2009.06596}.

\end{enumerate}
\end{remark}

\subsubsection{Smoothness away from the bad set}
Finally, if we drop the control on the energy profile, we can arrange the solution to be close to any given smooth vector field outside of a bad set of an arbitrarily small measure.

\begin{theorem} \label{thm:Without_Energy_Profile}
Let  $u_0  $ be divergence free, $u_0 \in W^{s,1}(\TT^2)$ for some $s>-1$, and $ v \in C^\infty(\TT^2 \times [0,T])$ be a smooth divergence free vector fields.

Then for any $\ep>0$, there exist a weak solution with initial data $u(0) =u_0$, regularity
$$
u \in C((0,T]; W^{s',1}(\TT^2))  \quad\text{for all $s'<1$},
$$
and a Borel set $E \subset [0,T]$ with $\mathcal{L}^1([0,T] \setminus E )  < \ep$ 
such that 
\begin{equation}\label{eq:main_thm_Linfty_close}\tag{\(\dagger\)}
\| u(t)  - v(t)  \|_{L^\infty(\TT^2)} \leq \ep \quad \text{for all $t   \in  E$}.    
\end{equation}

\end{theorem}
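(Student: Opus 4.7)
The natural strategy is to build $u$ by the same intermittent convex integration scheme used for Theorem \ref{thm:Energy_Profile_short}, but replacing the energy-matching constraint by a closeness-to-$v$ constraint. Concretely, I would seek $u = v + w$, where $w$ solves the perturbed system
\[
\partial_t w - \Delta w + \D(w\otimes w + v\otimes w + w\otimes v) + \nabla q = -\D(v\otimes v - \Delta v + \partial_t v),
\]
and try to make $w$ both concentrated in time on a small bad set $B\subset[0,T]$ and globally small in an appropriate class. The output of the iteration would be $w = \sum_{q\ge 1} w_q$, assembled from the usual space-time intermittent building blocks, with each $w_q$ carrying a temporal cutoff $\chi_q$ supported on a set $B_q$ of measure at most $\ep/2^q$; then $E := [0,T]\setminus \bigcup_q B_q$ has $\mathcal{L}^1([0,T]\setminus E)<\ep$ and on $E$ all the perturbations vanish at the level of the building blocks. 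Because the starting datum $u_0 \in W^{s,1}$ is supercritical, I would follow the paper's recipe of first evolving $u_0$ by a short gluing construction near $t=0$ that matches the heat extension, then stitching in the convex integration iteration for $t$ bounded away from $0$, exactly as anticipated in Theorem \ref{thm:KT_sharpness}.

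The key steps, in order, are: \emph{(i)} initialize the iteration with $u_0^{(0)}:=v$ plus a small correction supported in a short time interval that carries the rough datum $u_0$ into a smooth profile, computing the corresponding Reynolds stress $R_0$ (smooth away from the initial layer, with a controllable blow-up near $t=0$); \emph{(ii)} at each stage $q$, choose the new perturbation $w_{q+1}$ as in the main construction of the paper, but multiplied by a temporal cutoff $\chi_{q+1}$ supported on a set of measure $\le \ep 2^{-q-1}$ localized around the support of $R_q$, and mollified in a way compatible with the advection by $v$ (using Lagrangian flow maps of $v$ to avoid distortion in the transport error); \emph{(iii)} close the inductive estimates for $R_q \to 0$ in the $L^1_t L^1_x$-type norm used in the paper, now with $v$ appearing as a lower-order drift; \emph{(iv)} verify that outside the bad set $E$, the perturbations vanish identically due to the temporal cutoffs, hence $u|_E = v|_E$, which is even stronger than \eqref{eq:main_thm_Linfty_close}; \emph{(v)} derive the $C((0,T];W^{s',1})$ regularity from the telescoping sum $u - v = \sum w_q$ using the improved $W^{s',1}$-bounds on the building blocks for any $s'<1$, exactly as in Theorem \ref{thm:KT_sharpness}.

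The main obstacle is compatibility of the transport/Nash error estimates with the smooth background $v$: the additional linear terms $v\otimes w_q + w_q\otimes v$ create new error contributions in the Reynolds stress that are not present when one perturbs around zero, and one must check that these remain summable under the temporal cutoffs $\chi_q$. This is handled by absorbing $v$ into the mollification and transport steps (since $v$ is smooth, its $C^k$ norms are bounded uniformly for all $k$ and can be swallowed by the frequency parameters $\lambda_q$). A secondary issue is matching the rough initial condition while keeping $E$ of full density; but since the closeness condition \eqref{eq:main_thm_Linfty_close} is required only on a set of measure $\ge T-\ep$, one can afford to let the initial gluing layer live inside $[0,T]\setminus E$, so no density-at-zero argument is needed here (contrast with Theorem \ref{thm:Main-L^2}). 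The rest is a bookkeeping exercise: choose the sequence of intermittency and frequency parameters as in the paper's main construction, with the temporal concentration $\tau_q \to 0$ fast enough to produce a summable $\sum_q \mathcal{L}^1(B_q)<\ep$, and one arrives at the claimed solution $u = v + \sum_q w_q$.
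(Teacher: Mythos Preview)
Your overall approach matches the paper's: initialize the iteration with $u_1 = v$ (plus a correction near $t=0$ to achieve the initial datum $u_0$), drop the energy-correction step entirely, and apply Proposition~\ref{prop:main} iteratively so that the bad sets $E_n$ have summable measure. The paper's proof is exactly this, stated in three sentences.

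There is one genuine error in your proposal. You claim that outside the bad set the perturbations \emph{vanish identically}, giving $u|_E = v|_E$. This is false in the scheme. The principal part $w^{(p)}$, the divergence corrector $w^{(c)}$, and the acceleration corrector $w^{(a)}$ do vanish for $t \notin E$ since they carry factors of $g_k$. But the temporal oscillation corrector
\[
w^{(o)}(x,t) = -\sigma^{-1}\theta^2 \,\mathcal{P}\sum_{k}h_k(t)\,\D R_k, \qquad h_k(t)=\int_0^{\sigma t}(g_k^2(s)-1)\,ds,
\]
is supported on all of $[0,1]$, because $h_k$ does not vanish off $\Supp g_k$. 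This corrector is not optional: it is precisely what absorbs the error $\theta^2\sum_k (g_k^2-1)\,\D R_k$ coming from Lemma~\ref{lemma:a_k_interactions}, and that error has $L^1_{t,x}$ norm of order $\|R\|_{L^1}$, not $\delta$. If you drop $w^{(o)}$ to force exact vanishing on $E$, the iteration does not close. What actually holds is $\|w(t)\|_{W^{1/\delta,\infty}}\leq \delta$ for $t\notin E$ (Proposition~\ref{prop:main}, estimate~\eqref{eq:Main_Iteration_Estimates_E^c}), which telescopes to give $\|u(t)-v(t)\|_{L^\infty}\leq \sum_n \delta_n \leq \ep$ on the complement of $\bigcup_n E_n$. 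This is exactly \eqref{eq:main_thm_Linfty_close}, and no stronger.

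A minor point: the Lagrangian flow maps of $v$ you propose are unnecessary. Since $v$ is smooth with fixed $C^k$ norms independent of the iteration step, the cross terms $v\otimes w + w\otimes v$ land in the drift error $R_{\Dri}$ of Lemma~\ref{lemma:Linear_error} and are estimated crudely by $\|v\|_{L^\infty_{t,x}}\|w\|_{L^1 L^r}\lesssim C_u\lambda^{-\gamma}$, with no transport correction needed.
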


\begin{remark}
We list a few remarks concerning Theorem \ref{thm:Without_Energy_Profile}.  

\begin{enumerate}

\item    The last property \eqref{eq:main_thm_Linfty_close} is a key feature of the temporal concentration mechanism: the solution is ``large'' only on a ``small'' set in time, cf. \cite{2004.09538,2009.06596}.

\item In fact, the measuring norm in \eqref{eq:main_thm_Linfty_close} does not have to be $L^\infty(\TT^2)$--it can be any Sobolev norm.
    
\end{enumerate}

\end{remark}

\subsubsection{2D Euler equations and the Onsager conjecture}

In general, if a weak solution $u$ of the  Euler equations belongs to the dimension independent Onsager's class $L^3([0,T];B^{1/3}_{3,c_0}$), then it conserves the kinetic energy \cite{MR2422377}. The $1/3$-H\"older regularity is known as the Onsager threshold and has been proven to be sharp in dimensions $d\geq 3$ \cite{MR3866888} but not yet in 2D.

However, in two dimensions there are additional regularization mechanisms that prevent anomalous dissipation, cf. \cite{MR3551263,MR4170311,MR4228012}. For instance, as shown in \cite{MR3551263}, weak solutions of the 2D Euler equations obtained as the vanishing viscosity limit of Leray-Hopf solutions of the 2D Navier-Stokes equations conserve energy provided $\omega_0 \in L^p$ for some $p>1$. We state a version of this result as Theorem~\ref{thm:2DEuler_energy_consrvation}, and demonstrate its sharpness in the class of weak solutions of the Navier-Stokes equation in the following theorem, which is a consequence of the main iteration scheme.

\begin{theorem}\label{thm:2deuler_vanishing_limit}
There exists a weak solution of the 2D Euler equations $u\in C([0,T];W^{s,1}(\TT^2))$ for all $s<1$ with $\curl u(0) \in L^p(\TT^2)$ for some $p>1$ that does not conserve energy, and a family $\{u^\nu\}$ of weak solutions to the 2D Navier-Stokes equations with
\[
\|u^\nu - u\|_{L^\infty(0,T; L^{p'}(\TT^2))} \to 0 \qquad \text{as} \quad \nu \to 0,
\]
for all $p'<2$.
\end{theorem}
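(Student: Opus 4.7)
The plan is to rerun the paper's main convex-integration iteration while carrying the viscosity as a parameter $\nu \in [0,1]$. For each such $\nu$ one constructs stage-$q$ approximations $(u_q^\nu, R_q^\nu)$ satisfying the Navier-Stokes-Reynolds system
\[
\p_t u_q^\nu - \nu \Delta u_q^\nu + \D(u_q^\nu \otimes u_q^\nu) + \nabla p_q^\nu = \D R_q^\nu, \qquad \D u_q^\nu = 0,
\]
using the same intermittent spatio-temporal building blocks as in the main scheme. Since every stability estimate in the scheme was calibrated for $\nu = 1$, it remains valid for $\nu \in [0,1]$: the viscous error $\nu \Delta u_q^\nu$ is linear in $u_q^\nu$ and monotone in $\nu$, so its contribution to the new Reynolds stress---controlled via the inverse-divergence operator---can be absorbed with the same parameter choices uniformly in $\nu$.

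I would initialize the iteration with a fixed smooth divergence-free datum $u_0$, so that $\curl u_0 \in L^p(\TT^2)$ for every $p>1$, and prescribe a common strictly decreasing smooth energy profile $e(t)$ with $e(0) = \|u_0\|_2^2$. Running the variant of the scheme that pins the kinetic energy pointwise in $t$, as in Theorem \ref{thm:Main-L^2}, yields for every $\nu \in [0,1]$ a limit velocity $u^\nu \in C([0,T]; W^{s',1}(\TT^2))$ for all $s'<1$, satisfying the 2D Navier-Stokes equations with viscosity $\nu$ (the 2D Euler equations when $\nu = 0$) and $\|u^\nu(t)\|_2^2 = e(t)$ pointwise in time. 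Setting $u := u^0$, the strict monotonicity of $e$ forces $u$ not to conserve energy.

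For the vanishing-viscosity statement $\|u^\nu - u\|_{L^\infty_t L^{p'}} \to 0$, I would track the difference $u_q^\nu - u_q^0$ along the iteration. Since the two families share identical cutoffs, spatial/temporal profiles, and amplitudes, the perturbations $w_{q+1}^\nu - w_{q+1}^0$ are generated only by the $\nu$-dependent portion of the previous Reynolds stress and of the viscous correction. A telescoping estimate combined with the inductive bounds then gives $\|u_q^\nu - u_q^0\|_{L^\infty_t L^{p'}} \leq C \nu$ uniformly in $q$ for any $p' < 2$, and passing to the limit $q \to \infty$ followed by $\nu \to 0$ yields the stated convergence.

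The main obstacle is verifying that every parameter balance in the main iteration is genuinely dissipation-independent: one must check that no inductive inequality secretly relies on the viscous smoothing being of order one, and that the $\nu$-dependent correctors can be estimated in the subcritical $L^{p'}$ norm by a geometric factor that beats the inflation of the frequency parameters $\lambda_q$ and the intermittency parameters. Once these checks are in place, the theorem follows by the same routine passage-to-the-limit used for Theorems \ref{thm:Energy_Profile_short}--\ref{thm:Without_Energy_Profile}.
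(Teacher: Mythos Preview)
Your proposal has a genuine gap in the convergence argument. You claim that ``the two families share identical cutoffs, spatial/temporal profiles, and amplitudes,'' but the amplitudes are \emph{not} identical: in the scheme the coefficients $a_k = \theta\,\rho^{1/2}\,\Gamma_k(\Id - R_q/\rho)$ are built directly from the stage-$q$ Reynolds stress $R_q$, and $R_q^\nu$ differs from $R_q^0$ as soon as $\nu>0$ (already at stage $q=1$ the stress picks up the extra term $\nu\,\mathcal{R}\Delta u_1$, and the energy-correction step you invoke further modifies the stress in a $\nu$-dependent way through $e(t)-\|u_n^\nu(t)\|_2^2$). Hence $w_{q+1}^\nu - w_{q+1}^0$ is not merely a viscous remainder; it inherits the full nonlinear dependence of the scheme on the previous stress. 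To get the uniform-in-$q$ bound $\|u_q^\nu - u_q^0\|_{L^\infty_t L^{p'}} \le C\nu$ you would need a Lipschitz stability estimate for the entire iteration with respect to the input stress, with constants controlled independently of the rapidly growing $C_u$-constants in Proposition~\ref{prop:main}. Nothing in the paper provides such an estimate, and it is far from routine: the parameters $\lambda$ at each stage are chosen large depending on high $C^n$ norms of $(u_q,R_q)$, so even small perturbations of the stress can in principle force different parameter choices. (A minor side issue: the energy-pinning scheme in Theorem~\ref{thm:Main-L^2} only enforces $\|u^\nu(t)\|_2^2=e(t)$ on a good set $\mathcal{G}$, not pointwise as you write, though this still suffices for non-conservation.)

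The paper avoids all of this by decoupling the two constructions. It first builds the Euler solution $u$ once and for all (Proposition~\ref{prop:main} applies to Euler with $\nu=0$ since the Laplacian only makes the stress estimates easier). Then, for each fixed $\nu>0$, it mollifies $u$ at scale $\sim\nu$ to get a smooth $v^\nu$, uses $v^\nu$ as the \emph{initial} approximate solution $u_1$ for the NSE scheme, drops the energy-correction step, and exploits the freedom in the $\delta$-parameter of Proposition~\ref{prop:main} to force $\|u^\nu - v^\nu\|_{L^\infty_t L^{2-\nu}} \le \nu$. The convergence $u^\nu\to u$ then follows from the triangle inequality and $v^\nu\to u$, with no need for any stability analysis of the iteration.
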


\subsection{Comments on the proof}

Let us briefly explain the main ideas and difficulties of the proof. The main construction is based on a convex integration scheme that consists of adding highly oscillatory and concentrated perturbations to produce a weak solution of the 2D Navier-Stokes equations in the limit and an energy pumping mechanism that allows us to achieve the Lusin-type property for the kinetic energy profile. 

One of the biggest difficulties in any convex integration scheme designed for the Navier-Stokes equations is the presence of the Laplacian, which in 2D makes the equations $L^2$-critical. This is the very reason that we need to work in a regularity setting below this threshold. The framework of intermittent convex integration developed in \cite{MR3898708} only works above the regularity level $C_t L^2$ which is not sufficient for 2D Navier-Stokes equations.

The main building block in the convex integration scheme employed in this paper is a family of accelerating jet flows that are oscillatory and intermittent in space as well as  in time. These jet flows combine the advantages of the two constructions from \cite{1809.00600} and \cite{2009.06596}, and can be fully intermittent both in space and in time, in contrast to the ones in \cite{1809.00600} and \cite{2009.06596}. The main nonuniqueness results are the manifestation of the wild behaviors of such jet flows:
\begin{enumerate}
    \item Accelerating jet flows are at rest for the majority of the time  and only move on a union of small time intervals whose length models the strength of temporal intermittency;
    
    \item In space, these flows are divergence-free and supported in small moving balls. The extreme spatial intermittency  is needed for achieving the $C_t L^p$ regularity  for $p$ close to $2$. 
    
    \item On each time interval in the temporal supports, the jet flows first accelerate,  travel along the torus $\TT^2$ with large speed, and then come to a rest. They are periodic in both space and time.

    \item The jet flows solve the Navier-Stokes equations with a small $L^1_t W^{-1,1}$ error. In other words, the ``Reynolds stress'' of these jet flows is small in $L^1_{t,x}$.
\end{enumerate}

Once the main building blocks are obtained, we employ a space-time convex integration reducing the Reynolds stress to a highly oscillating in time error, which can be balanced by appropriate temporal and acceleration correctors, taking advantage of the space-time intermittency of the building blocks. To obtain nonuniqueness in the class $C_t L^p$ for $p<2$, we optimize the parameters in the accelerating flows so that they are fully intermittent in space but only slightly intermittent in time. The membership in this class and reducing the stress error at the same time require a very delicate choice of parameters in the accelerating flows. Once the accelerating flows are in the class $C_t L^p$, it is then fairly straightforward to obtain nonuniqueness in $\cap_{p<2} C_t L^p$ by a limiting argument in the iteration scheme, at least for smooth initial data.

To achieve nonuniqueness  for rough initial data, we take advantage of the smoothing property of the fractional heat flow. Choosing the power of Laplacian large enough, we show that any divergence-free initial data $u_0 \in W^{s,1}$ for some $s> -1 $ admits a solution $(u,R)$ of the Navier-Stokes-Reynolds system with the stress error  $R \in L^1_{x,t}$, which is compatible with our space-time convex integration scheme. Thanks to the smoothing property of the fractional heat flow, we can build a solution of the 2D Navier-Stokes equations on top of $(u,R)$, approaching the initial time as iterations progress.

Another key ingredient needed for the Lusin-type property is an energy pumping mechanism, done separately from the convex integration step, which essentially exploits the fact that the smallness in $L^\infty_t L^2$ falls in the subcritical regime of the 2D Navier-Stokes equations. Due to the time-intermittent nature of the convex integration scheme, the solution is mostly intact on a large portion of the time axis, what we call the good set $\mathcal{G}$, whose complement is of arbitrarily small measure. During the convex integration step, the perturbation can be made arbitrarily small on the good set $\mathcal{G}$ in any Sobolev space. On the other hand, we are able to correct the solution on the good set $\mathcal{G}$ to achieve desired properties, such as the $L^2$-continuity and gaining the prescribed energy level.

\subsection*{Organization}

The rest of the paper is organized as follows.
\begin{enumerate}
    \item In Section \ref{sec:outline}, we prove all the main theorems stated in the introduction assuming the main Proposition \ref{prop:main} of the convex integration iteration.
    \item Section \ref{sec:proof_main_prop} constitutes the first step of the proof of Proposition \ref{prop:main}---we construct the building blocks, accelerating jets, define the velocity perturbation, and decompose the resulting Reynolds stress.
    \item Section \ref{sec:proof_step_2} is devoted to estimating the velocity perturbation and the Reynolds stress error. We specify the oscillation, concentration, and acceleration parameters, and show that the inductive assumption of the iteration Proposition~\ref{prop:main} are satisfied. 
    \item  We show the the conservation of energy for vanishing viscosity solutions to the 2D Euler equations with initial vorticity in $L^p$, $p>1$ in Appendix~\ref{sec:append_zeroviscosity}. In Appendices~\ref{sec:append_improved_holder}, \ref{sec:append_antidiv} we recall some technical inequalities and properties of the antidivergence operators used in the convex integration scheme.  
\end{enumerate}

\subsection*{Acknowledgment}
AC was partially supported by the NSF grant DMS--1909849. The authors thank Manh Khang Huynh for pointing out an incorrect estimate of the energy corrector $\overline w_{n+1}$ in the  energy pumping step of  the proof of Theorem 1.2 in an earlier version of the paper.

\section{Outline of the proof}\label{sec:outline}

\subsection{Notations}
Throughout the paper, we fix our spatial domain $\TT^2 = \RR^2/\ZZ^2$ which we often identify with a periodic square $[0,1]^2$.

For any $d\in \NN+:=\NN\setminus{0}$, $\mathcal{L}^d$ denotes the Lebesgue measure in $\RR^d$.   For the Lebesgue spaces $L^p(\TT^2)$, the Lebesgue norms are denoted by
$$
 \|f \|_p = \|f\|_{L^p} = \|f \|_{L^p(\TT^2)} 
$$
where we do not distinguish scalar, vector or tensor valued functions. If the function $f:\TT^2 \times [0,T] \to \RR $ is time-dependent, we write
$$
\|f (t)\|_p  = \|f(t) \|_{L^p(\TT^2)}, 
$$
to indicate that the spatial norm is taken at each time slice $t\in [0,T]$.

The differentiation operations such as $\nabla$, $\Delta$, and $\D$ are meant for differentiation in space only, while for differentiation in time we use $\p_t$. 

In general, repeated indexed are summed without mentioning when the meaning is clear in the context. For matrix-valued function $A: \TT^2 \to \RR^{2\times 2}$, the usual (spatial) divergence is defined by
$$
\D A= \partial_j A_{ij}.
$$
For two vector valued functions $f ,g :\TT^2 \to \RR^2$, the tensor product $\otimes$ is given by $f\otimes g = f_i g_j :\TT^2 \to \RR^{2\times 2}$ and $ \mathring{\otimes}$ denotes the usual traceless tensor product $f \mathring{\otimes} g = f_i    g_j  -  \frac{1}{2}\delta_{ij}  f_k g_k$.

For any function $f$ on $\TT^2$, we say $f$ is $\sigma^{-1}\TT^2$-periodic if 
$$
f(x + \sigma^{-1} k) = f(x) \quad \text{for any $k\in \ZZ^2$},
$$
and for any $\sigma \in \NN$, we use the notation $f(\sigma \cdot )$ for the function $x \mapsto f(\sigma x)$, so $f(\sigma \cdot )$ is at least $\sigma^{-1} \TT^2$-periodic.

We use intensively the notation $X \lesssim Y$ which means $X \leq C Y$ for some constant $C >0$. Sometimes the constant will depend on various parameters and in that case we will write $X \lesssim_{\alpha, \beta, \gamma \dots} Y$ to indicate $X \leq C Y$ for some constant $C >0$ depending on $\alpha, \beta,\gamma \dots$. The notation $\gtrsim  $ is similarly defined and $X \sim Y$ means both $X \lesssim Y$ and $X \gtrsim Y$ at the same time.

\subsection{The Navier-Stokes-Reynolds system}
The proof of our main theorems is based on the construction of a sequence of approximate solutions. These approximate solutions (of \eqref{eq:NSE}) satisfy the so-called Navier-Stokes-Reynolds system:
\begin{equation}\label{eq:NSR}
\begin{cases}
\p_t u - \Delta u  + \D(u \otimes u) + \nabla p = \D R &\\
\D u =0,
\end{cases}
\end{equation}
where $R:\TT^2 \times [0,T] \to \mathcal{S}^{2\times 2}_0 $ is a $2\times 2$ symmetric traceless matrix, usually termed as the Reynolds stress in the literature. We refer to the discussion in \cite{MR4188806} for the motivation of this system and its relation to mathematical turbulence. In a nutshell, the Reynolds stress $R$ measures the approximateness of the solutions to exact solutions of the Navier-Stokes equations.

In what follows, we will say $(u,R)$ is a smooth solution to \eqref{eq:NSR} on $I$ if $u$ and $R$ satisfy \eqref{eq:NSR} and are both smooth on $ \TT^2 \times  I$ for some interval $I \subset \RR$. Note that the pressure is intentionally left out in this formulation since one can take the divergence of \eqref{eq:NSR} to obtain the following elliptic equation
$$
\Delta p = \D \D( R - u\otimes u),
$$
which uniquely determines the pressure when coupled with the usual zero mean condition $\int_{\TT^2} p  = 0$.

\subsection{Universality of the Navier-Stokes-Reynolds flows}
We prove the following auxiliary result used in the proof of Theorem \ref{thm:Energy_Profile_short} concerning the existence of (weak) solutions of \eqref{eq:NSR} for general rough data. 

Throughout the paper, we say $(u,R)$ is a weak solution of \eqref{eq:NSR} on $[0,T]$ if
\begin{equation}\label{eq:weak_formulation_u_R}
\int_{\TT^2} u_0(x) \varphi(  x, 0 ) \, dx = -   \int_{ \TT^2\times[0,1]} (u \cdot \Delta \varphi + u \otimes u  : \nabla \varphi + u  \cdot \partial_t \varphi ) \, dx dt  - \int_{ \TT^2 \times[0,1]} R : \nabla \varphi  \, dx dt  
\end{equation}
holds for all test functions $ \varphi\in \mathcal{D}_T  $ (smooth, divergence-free,  and vanishing on $ t\geq T$ ).

\begin{theorem}\label{thm:universality}

For any divergence-free $u_0 \in W^{s,p}(\TT^2) $, $s>-1$, $1 \leq p \leq \infty $, there exists a weak solution $(u , R) $ of \eqref{eq:NSR} on $ \TT^2 \times [0,T]$ such that $(u,R) $ is smooth on $ (0,T]$ and
$$
u\in C([0,T]; W^{s,p}(\TT^2)) \quad \text{and}\quad R \in L^1 (  \TT^2 \times  [0,T]).
$$
In fact, the velocity can be chosen to be $u(t) = e^{-\nu t (-\Delta)^\alpha} u_0$ for any $\alpha \geq 2 $ and $\nu>0$.
 
\end{theorem}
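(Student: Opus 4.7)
The strategy is to take the velocity $u$ as the explicit fractional-heat evolution $u(t)=e^{-\nu t(-\Delta)^\alpha}u_0$ suggested by the statement, and to define the Reynolds stress $R$ as an antidivergence of the corresponding residual of \eqref{eq:NSR}. Since $u$ satisfies $\partial_t u=-\nu(-\Delta)^\alpha u$ by construction, this residual reads
\[
\partial_t u-\Delta u+\D(u\otimes u)\;=\;-\nu(-\Delta)^\alpha u-\Delta u+\D(u\mathring{\otimes}u)+\nabla\tfrac{|u|^2}{2},
\]
so I would set
\[
R\;:=\;-\nu\,\mathcal{R}(-\Delta)^\alpha u\;-\;\mathcal{R}\Delta u\;+\;u\mathring{\otimes}u,
\]
where $\mathcal{R}$ is the symmetric traceless antidivergence from Appendix~\ref{sec:append_antidiv}, and absorb $\tfrac{|u|^2}{2}$ into the pressure. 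Divergence-freeness of $u$ is preserved by the semigroup, smoothness on $\TT^2\times(0,T]$ follows from the smoothness of the fractional heat kernel, and $u\in C([0,T];W^{s,p}(\TT^2))$ follows from strong continuity of $e^{-\nu t(-\Delta)^\alpha}$ on $W^{s,p}$.

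The heart of the argument is showing $R\in L^1(\TT^2\times[0,T])$, which I would do via three semigroup smoothing estimates. For the quadratic term, Plancherel together with the rescaling $\xi=t^{-1/(2\alpha)}\eta$ gives
\[
\|e^{-\nu t(-\Delta)^\alpha}u_0\|_{L^2(\TT^2)}\;\lesssim\;t^{-(s+1)/(2\alpha)}\|u_0\|_{W^{s,1}(\TT^2)},\qquad s\in(-1,0],
\]
since $\int_{\RR^2}|\eta|^{2s}e^{-c|\eta|^{2\alpha}}\,d\eta$ converges precisely when $s>-1$; squaring and integrating in time then produces $u\in L^2_{t,x}$ as soon as $\alpha\geq 3/2$, hence $u\mathring{\otimes}u\in L^1_{t,x}$. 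The cases $s>0$ or $p>1$ follow by Sobolev embedding into $L^q$ with $q>1$, which only improves the bound. For the two linear pieces, I would use the one-derivative gain of $\mathcal{R}$ to reduce matters to
\[
\|(-\Delta)^{\alpha-\frac12}u(t)\|_{L^1}\;\lesssim\;t^{(s-2\alpha+1)/(2\alpha)}\|u_0\|_{W^{s,p}},\qquad \|\nabla u(t)\|_{L^1}\;\lesssim\;t^{(s-1)/(2\alpha)}\|u_0\|_{W^{s,p}},
\]
and both time integrals converge precisely when $s>-1$ (the second additionally requiring $\alpha\geq 1$, satisfied by $\alpha\geq 3/2$).

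The main technical obstacle I anticipate is that the antidivergence $\mathcal{R}$, being of Calder\'on--Zygmund type, fails to be bounded on $L^1$. I would circumvent this by estimating $\mathcal{R}$ in $L^{1+\delta}$ for an arbitrarily small $\delta>0$ and using the embedding $L^{1+\delta}(\TT^2)\hookrightarrow L^1(\TT^2)$, the loss in $\delta$ being absorbed by the strict inequality $s>-1$; alternatively, one can invoke the refined bounds for $\mathcal{R}$ collected in Appendix~\ref{sec:append_antidiv}. Attainment of the initial data $u(0)=u_0$ in the weak sense \eqref{eq:weak_formulation_u_R} is then immediate from the continuity $u\in C([0,T];W^{s,p})$ together with $R\in L^1_{t,x}$, via a routine integration by parts in the formulation against a test function $\varphi\in\mathcal{D}_T$.
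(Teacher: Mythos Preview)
Your proposal is essentially the paper's own proof: take $u(t)=e^{-\nu t(-\Delta)^\alpha}u_0$, set $R=\mathcal R(\partial_t u-\Delta u)+u\mathring\otimes u$, and bound the three pieces by fractional-heat smoothing, dealing with the failure of $\mathcal R$ on $L^1$ by passing through $L^q$ for some $q>1$ close to $1$, exactly as the paper does. One minor slip to fix: in your Plancherel step the bound $|\hat u_0(\xi)|\le |\xi|^{-s}\||\nabla|^s u_0\|_{L^1}$ produces the integrand $|\eta|^{-2s}$ (not $|\eta|^{2s}$), so the correct decay is $\|u(t)\|_{L^2}\lesssim t^{-(1-s)/(2\alpha)}$ rather than $t^{-(s+1)/(2\alpha)}$; this does not change the strategy.
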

\begin{proof}
Take $u(t) = e^{-\nu t (-\Delta)^\alpha} u_0$ for some $\alpha \geq  2$, $\nu>0$, and define
\begin{equation}\label{eq:def_R_universality}
R : = \mathcal{R}(\p_t u - \Delta u) + u\mathring{\otimes } u,
\end{equation}
where $\mathcal{R}$ is the antidivergence defined in Appendix~\ref{sec:append_antidiv}. Note that the smoothness of $R$ on $\TT^2 \times ( 0,T]$ follows from that of $u$.

To prove the claims, we estimate each term in \eqref{eq:def_R_universality}. For any $0< t \leq 1$, by \eqref{eq:appendix_R_2}, $L^q$ boundedness of the Riesz transform for some $q>1$, and well-known estimates for the fractional heat semigroup, we have
\[
\begin{split}
\|\mathcal{R}  \p_t u  (t )\|_{L^1} = \|\mathcal{R} (-\Delta)^{\alpha} u (t )\|_{L^1}   
&\lesssim   \|(-\Delta)^{\alpha-\frac{1}{2}} u (t )\|_{L^q}\\
&    \lesssim t^{-1+\frac{1}{\alpha}(\frac{1}{q}-\frac{1-s}{2})} \||\nabla|^{s}u_0\|_{L^1},
\end{split}
\]
which is integrable by choosing $q $ sufficiently close to 1 when $\alpha>0$ and $s >-1$;
and
\[
\|\mathcal{R} \Delta u (t ) \|_{L^1} \lesssim \| |\nabla| u (t ) \|_{L^q}  \lesssim t^{\frac{1}{2\alpha}(s-1+ 2(1- \frac{1}{q}) ) } \||\nabla|^su_0\|_{L^1},
\]
which is integrable by choosing $q $ sufficiently close to 1 when   $\alpha \geq 1 $ and $s >-1$;
as well as
\[
 \| u\mathring{\otimes } u (t) \|_{L^1}   \lesssim \|u (t ) \|_{L^2}^2    \lesssim t^{\frac{s-1}{\alpha}} \||\nabla|^{s}u_0\|_{L^1}^2,
\]
which is integrable  when   $\alpha \geq 2$ and $s >-1$;

\end{proof}

\subsection{The main iteration proposition}
 
We now state the main proposition of the paper.

\begin{proposition}\label{prop:main}
There exist a geometric constant $M>0$ such that for any $1 \leq p < 2$, there exists $r>1$ depending only on $p$ with the following properties.

Let $(u,R)$ be a weak solution of \eqref{eq:NSR} on $  \TT^2 \times  [0,1]$ such that $(u,R)$ is smooth on $(0,1]$. Given any $\delta>0$ and any closed interval $I \subset (0,1]$, there exists another weak solution $(u_1,R_{1})$  of \eqref{eq:NSR} on $ \TT^2  \times   [0,1] $ and an open set $E \subset I $ such that 
\begin{enumerate}
    \item The velocity perturbation $w : = u_{1}  - u $ is smooth on $[0,1]$,
    $$
    \Supp_t w \subset I,
    $$
    and 
    \begin{align}
    \label{eq:Main_Iteration_Estimates_E_density}
    \mathcal{L}^1\big( [0,t] \cap E \big) \leq t\delta, \qquad  \text{for all $0\leq t \leq 1$.}
    \end{align}
    
    \item For any $t \not \in  E$
    \begin{align} \label{eq:Main_Iteration_Estimates_E^c}
     \|w  (t)\|_{W^{\frac{1}{\delta}, \infty} (\TT^2) } & \leq \delta,\\ \Big| \|u_1  (t)\|_{L^2(\TT^2)}^2  - \|u   (t)\|_{L^2(\TT^2)}^2 \Big| & \leq \delta^2 \label{eq:Main_Iteration_Estimates_E^c_2}.
    \end{align}

    \item There hold  the estimates,
\begin{align}
 \|w \|_{L^2(  \TT^2 \times  [0,1])} &\leq M  \| R \|^{\frac{1}{2}}_{L^1( \TT^2 \times I )}, \label{eq:main_prop_L2}\\   
 \|w \|_{L^{\infty}(0,1;   L^p (\TT^2))} & \leq    \delta\label{eq:main_prop_Lp}.
\end{align}

\item The new stress error $R_1$ satisfies
 $$
\|R_{1}  \|_{L^1(I; L^r( \TT^2  ))} \leq \delta .
$$
\end{enumerate}

\end{proposition}

In the above proposition, the role of the closed interval $I \subset (0,1]$ is to progressively get to $t=0$ along the iterations in the proof. For the current paper, one should think of $I$ as approximations of $(0,1]$. It also helps us to achieve the exact energy profile of the solution near $t=0$.

\begin{remark} \label{rem:Main_Iterarion_Remark}
The velocity perturbation in the above proposition enjoys the following additional properties.
\begin{enumerate}
\item There exists disjoint open intervals $I_n \subset I$ such that $E\subset \cup_n I_n$ and a small $r>0$, which goes to zero as  $\delta \to 0$, such that each $I_n$ in $\cup_n I_n $ is of length at most $r$, and the number of intervals $I_n$ is bounded by $\lesssim r^{-\frac{1}{9}}$.

\item In fact, thanks to Corollary~\ref{cor:L^inftyW^{s_p,1}}, estimate \eqref{eq:main_prop_Lp} can be replaced by $$
\|w \|_{L^{\infty}(0,1;   W^{s_p,1} (\TT^2))} 
\leq \delta,
$$ 
for some $0<s_p<1$ and $s_p \to 1$ as $p \to 2$. This is used to prove (sharp) nonuniqueness in Sobolev spaces $L^\infty_t W^{s,1}$, $s<1$, in Theorem~\ref{thm:KT_sharpness}.
\end{enumerate}

\end{remark}

\subsection{Proof of main nonuniqueness results} \label{sec:Proof_of_main_nonuniqueness_results}
We prove Theorems~\ref{thm:Energy_Profile_short}, \ref{thm:Main-L^2}, and \ref{thm:KT_sharpness} assuming Proposition \ref{prop:main}. Without loss of generality we assume $T=1$ and $\varepsilon < \frac{1}{2}$.

\begin{proof}[Proof of Theorems~\ref{thm:Energy_Profile_short}, \ref{thm:Main-L^2}, and \ref{thm:KT_sharpness}]
For $n \in \NN^+$, we construct a sequence of solutions $(u_n ,R_n)$, positive numbers $\delta_{n } \to 0$, monotone decreasing $t_n \to 0$ as $n \to \infty$, and open sets $E_n \subset [0,1]$ such that $(u_n ,R_n)$ converges to the desired weak solution $u$, and the union of $ E_n$ gives $ [0,1] \setminus \mathcal{G}$.

The solutions $(u_n , R_n)$  and sets $E_n$, constructed by induction, will satisfy the following assumptions.

\begin{enumerate}[label=(\alph*)]
    \item \label{item:uR_1} $(u_n ,R_n)$ is a smooth solution of \eqref{eq:NSR} on $\TT^2 \times (0,1] $;
  
    \item \label{item:uR_2} The Reynolds stress is small:
    \begin{align*}
    \| R_{n }\|_{L^1( \TT^2 \times [0,1] )} \leq \delta_{n};
    \end{align*}

    \item \label{item:uR_3} $u_n$ achieves the prescribed the energy level:
    \begin{align*}
      e(t) - \|u_n(t)  \|_2^2 & >  0 \quad \text{for all $ t   \in (0,1] \setminus \mathcal{B}_n$},\\
       e(t) - \|u_n(t)  \|_2^2 &\leq \frac{\delta_n^2 }{100} \quad \text{for all $ t   \in [t_n,1] \setminus \mathcal{B}_n$},
    \end{align*}
    where we  define the bad set $\mathcal{B}_n \subset [0,1]$ as the union of all $E_j$, $j \leq n$:
$$
\mathcal{B}_n = \bigcup_{1\leq j \leq n} E_j ;
$$
    \item \label{item:uR_4} The sets $E_n$ are unions of open intervals and satisfy
    $$
    \mathcal{L}^1([0,t] \cap E_n) \leq t\delta_n, \qquad \forall \, 0\leq t \leq 1.
    $$

\end{enumerate}   
Here, the parameters $p_n$ and $\delta_n$ are given explicitly as
\[
\qquad p_n= 2-2^{-n-1} \quad \text{for $n \in \NN^+$ }, \quad \text{and} \quad \delta_n = \ep 2^{-n} \leq 1 \quad  \text{if $n \geq 2$}.
\]
The value of $\delta_1>0$ and the sequence $t_n$ are chosen separately after we specify the initial solution $(u_1, R_1)$ in Step 1 below.

In addition, the perturbation $w_n: =u_{n}  -u_{n-1}$ satisfies for $n\geq 2$:
\begin{enumerate}[label=(\roman*)]
    \item  \label{item:w_1}   
    Regularity estimates:
    \begin{align*}
    \| w_n\|_{L^2( \TT^2 \times [0,1] )} &\leq C_M   \delta_{n-1}^{\frac{1}{2}} ,  \\
    \| w_n\|_{L^\infty(   0,1 ;L^{p_n}  )}& \leq   \delta_{n-1}, 
    \end{align*}
    where $  C_M>0 $ depends only on the initial solution $(u_1,R_1)$ and
    the geometric constant $M$ from Proposition~\ref{prop:main};
    \item \label{item:w_2}
    Support away from the origin:
    \begin{align*}
    \Supp w_n &\subset (0,1];
    \end{align*}
    \item \label{item:w_3} Small energy outside of $E_n$:
    \[ 
   \|w_n\|_{L^2(\TT^2)}        \leq \delta_n, \qquad t\in [t_n, 1] \setminus E_n.
\]
    
\end{enumerate}

\vspace{0.5em}
\noindent
{\bf Step 1: Initialization}

We choose the initial solution $(u_1,R_1)$ according to the given energy profile and $\ep>0$ in the main theorems. Take
\[
u_1(t) = e^{- \nu t (-\Delta)^{2}} u_0,
\]
for some $\nu>0$ that will be fixed large enough depending on $e(t)$. Thanks to
Theorem~\ref{thm:universality}, there exists a Reynolds stress $R_1 \in L^1(\TT^2\times [0,1])$ such that $(u_1,R_1)$ is a smooth solution to the Navier-Stokes-Reynolds system \eqref{eq:NSR} on $(0,1]$. The associated pressure can be obtained by a routine computation.

Now we define $E_1=[0,\frac{\ep}{2}]$ if $u_0 \notin L^2(\TT^2)$ (recall that $\ep<\frac{1}{2}$), and $E_1=\emptyset$ when $u_0 \in L^2(\TT^2)$. Note that $\|u_1(t)\|_{L^2}$ is bounded on any compact subset of $(0,1]$, and, in the case $p=2$, we also have $\|u_1(t) \|_2^2 \leq \|u_0\|_2^2e^{-\nu t}$. Thus, in either case, we can choose $\nu$ large enough so that
\[
\| u_1 (t) \|_2^2 < e(t), \quad \text{for all} \ \ t\in (0,T]\setminus E_1.
\]

Let us point out that $u_1$ is responsible for the difference in final regularity of $u$ between the case $p<2$ and $p=2$.

Since $R_1 \in L^1(\TT^2\times [0,1])$, we can specify the value of $\delta_1>1$ as 
\begin{equation}\label{eq:def_delta_1}
\delta_1 = 1+ 100\max\big\{\| R_1 \|_{L^1(\TT^2\times [0,1])},  \sup_{t\in[0,1]\setminus E_1} \big[e(t)-\|u_1(t)\|_2^2\big]^{\frac{1}{2}} \big\},
\end{equation}
and choose a strictly decreasing sequence of   $\{ t_n\}_{n \geq 1}$,  such that $t_n \to 0$ and
\begin{subequations}
\begin{align}
 100 \|R_1\|_{L^1(\TT^2 \times [0,t_{n}])} &\leq \delta_n,\label{eq:def_t_n_1}\\
t_n^{\frac{1}{2}} \|e\|_{L^\infty (0,1)} &\leq \frac{1}{4}\delta_{n} .\label{eq:def_t_n_2}
\end{align}
\end{subequations}
It follows that $(u_1 ,R_1)$ satisfies all the conditions \ref{item:uR_1}--\ref{item:uR_4}.  We also remark that condition \eqref{eq:def_t_n_2} is to get the energy density statement at $t=0$, so it is only used for the case $p=2$, as when $p<2$, we have $0 \in E_1 \subset \mathcal{B}_n$.

In the step 2--4 below, we will construct the solution $(u_n, R_n)$ and the sets $E_n$ for $n \geq 2$ by induction. 

\vspace{0.5em}
\noindent
{\bf Step 2: Energy correction $(u_n, R_n) \mapsto (\overline{u}_n, \overline{R}_n)$}

In this step, our main objective is to correct the energy level of $(u_n, R_n) $ to obtain a new solution pair $(\overline{u}_n, \overline{R}_n)$ with a slightly larger $\overline{R}_n $. This is done by adding a small energy corrector $ \overline{w}_{n+1}$ to $u_n$.

In view of inductive assumptions, the correction $\overline{w}_{n+1} : = \overline{u}_n - {u}_n $ needs to improve the energy level of $u_n$ on both $[t_{n+1}, t_n]$ and $[t_n,1 ] \setminus \mathcal{B}_n$. On the set $[t_n,1 ] \setminus \mathcal{B}_n$, the energy is already quite close to $e(t)$ due to \ref{item:uR_3}, and hence $\overline{w}_{n+1} $ is small in $L^2$ in this region. On $[t_{n+1}, t_n]$, the correction might be larger than $\delta_n$ since it is the first time we correct the energy there, but due to $t_n \to 0$ as in \eqref{eq:def_t_n_2} the correction will be small in $L^2_t$ norm. These observations ensure the needed convergence of $\overline{w}_{n+1}$ whose estimates are verified in the next step.

Now we turn to the specific construction of $\overline{w}_{n+1}$. Throughout the iteration, we fix a function $\psi \in C^\infty_c(\RR^2) $ with $\Supp  \psi\subset (0,1)^2$ and rescale it by a large parameter $\mu>0$, still denoting the resulting function by  $\psi$, so that
\begin{equation}\label{eq:def_psi_energy_correction}
\begin{aligned}
\| \nabla \psi \|_{L^2(\TT^2)}  = 1, \qquad
\| \nabla^n \psi \|_{L^{q}(\TT^2)} &\lesssim \mu^{n-1} \mu^{2(\frac{1}{2}  - \frac{1}{q})} \quad \text{for all $1 \leq q\leq \infty$}    .
\end{aligned}
\end{equation}
We will use $\psi$ as the stream function on $\TT^2$ for the energy corrector $ \overline{w}_{n+1}$. The exact value of $\mu$ depends on many factors and will be specified later.
 
 Now we can introduce a smooth nonnegative energy profile $\rho: [0,1] \to \RR^+$  such that all of the following holds.
\begin{enumerate}
    
\item $\rho(t)$ pumps the exact amount of energy outside of the bad set:
\begin{equation} \label{eq:def_of_e_n}
[\rho(t)]^2 = 
\left(1-\frac{\delta_{n+1}^2}{600\|e\|_{L^\infty (0,1)}}\right)\left[  e(t)  - \|u_n(t) \|_2^2  \right] \quad   \text{if $t \in [t_{n+1},1]\setminus \mathcal{B}_n$};
\end{equation}

 \item  $\rho(t)$ is under control on $[0,1]$:
 \begin{equation}\label{eq:def_cutoff_energy}
 [\rho(t)]^2  <
\begin{cases}
 e (t)- \|u_n(t)\|_2^2  \quad &\text{ if $t\in [t_{n+2},t_{n+1}]  $} \\ 
  \big\| e (t)- \|u_n(t)\|_2^2 \big\|_{L^\infty([t_{n},1]\setminus \mathcal{B}_n )} \quad &\text{ if $t\in [ t_{n } , 1]  $};
 \end{cases}    
 \end{equation}



\item $\rho(t)$ is supported away from $t=0$:
\begin{equation}\label{eq:def_cutoff_energy_zero_near_0}
\rho(t) = 0, \quad \text{for all $t\in[0,t_{n+2}]$}.
\end{equation}
 
\end{enumerate}

Such a profile $\rho(t)$ exists thanks to the fact that $\mathcal{B}_n \subset [t_n,1] $ and the inductive assumption \ref{item:uR_3} , and we use it together with the stream function $\psi$ to define the energy corrector $\overline{w}_{n+1}$:
$$
\overline{w}_{n+1} =   \rho(t)  \nabla^{\perp} \psi.
$$
We remark that in \eqref{eq:def_cutoff_energy} we use $t_n$ to separate the two cases as $\rho(t)$ could be large on $[t_{n+1}, t_n ]$ when comparing to $\delta_n$, cf. \eqref{eq:size_of_e_n} below. The exact choice of $\rho(t) $ is not substantial to our purpose as long as it fulfills \eqref{eq:def_of_e_n}--\eqref{eq:def_cutoff_energy_zero_near_0}. Immediately, we have
\begin{equation}\label{eq:size_of_w_n}
\| \overline{w}_{n+1} (t)  \|_2  = \rho(t), \quad \text{for all $t\in [0,1]$}.
\end{equation}
In addition, by assumption \ref{item:uR_3} and \eqref{eq:def_cutoff_energy}, it follows that
\begin{equation}\label{eq:size_of_e_n}
0\leq \| \overline{w}_{n+1} (t)  \|_2 \leq \frac{1}{10}\delta_n, \quad \text{for all $t\in [t_n,1]$}.
\end{equation}

The new solution $ (\overline{u}_n , \overline{R}_n)$ will then be defined as
$$
\overline{u}_n : = u_n + \overline{w}_{n+1},
$$
and 
\begin{equation}\label{eq:def_overline_R_n}
\overline{R}_n: = R_n + \mathcal{R}(- \Delta \overline{w}_{n+1} + \p_t \overline w_{n+1})   +  (\overline{w}_{n+1} \mathring{\otimes}  u_n + u_n \mathring{\otimes}  \overline{w}_{n+1})  + (\overline{w}_{n+1}  \mathring{\otimes} \overline{w}_{n+1}  ).
\end{equation}

\vspace{0.5em}
\noindent
{\bf Step 3: Estimates of $ (\overline{u}_n, \overline{R}_n)$ and $\overline{w}_{n+1}$}

In this step we establish the following estimates that will be passed to the next step. The estimates that we need are:
\begin{align}\label{eq:estimates_overline_R_n}
\| \overline{R}_n \|_{L^1(\TT^2 \times [0,1] ) } \leq    C \delta_n,
\end{align}
where $C>0$ is independent of $n$, as well as
\begin{subequations}
\begin{align}  
   \|\overline{w}_{n+1} \|_{L^2(0,1; L^{2}( \TT^2 )  ) } & \leq  \frac{1}{2}\delta_{n}, \label{eq:estimates_overline_w_n_1} \\
  \|\overline{w}_{n+1} \|_{L^\infty(0,1; L^{p_n}( \TT^{2} )  ) } & \leq  \frac{1}{2}\delta_{n}, \label{eq:estimates_overline_w_n_2}   
\end{align}
\end{subequations}
and
\begin{subequations}
\begin{align}
  e(t)  - \|   \overline{u}_n \|_2^2 &>   0 \quad \text{for all $t\in (0,1] \setminus \mathcal{B}_n $},\label{eq:energy_level_energy_correction_1}\\
0< e(t)  - \|   \overline{u}_n \|_2^2 &\leq   \frac{1}{500}\delta^2_{n+1}   \quad \text{for all $t\in [t_{n+1},1] \setminus \mathcal{B}_n $} \label{eq:energy_level_energy_correction_2}.    
\end{align}
\end{subequations}

The easiest one is \eqref{eq:estimates_overline_w_n_2}, which follow directly from \eqref{eq:def_psi_energy_correction} and \eqref{eq:size_of_e_n} by taking the free parameter $\mu$ sufficiently large. For \eqref{eq:estimates_overline_w_n_1}, we consider the split
\begin{align*}
\|\overline{w}_{n+1} \|_{L^2(0,1; L^{2}( \TT^2 )  ) } \leq \|\overline{w}_{n+1} \|_{L^2(0,t_n; L^{2}( \TT^2 )  ) } +\|\overline{w}_{n+1} \|_{L^2(t_n,1; L^{2}( \TT^2 )  ) } .
\end{align*}
The bound for the first term follows from \eqref{eq:def_t_n_2},
the first part in \eqref{eq:def_cutoff_energy}, and \eqref{eq:def_cutoff_energy_zero_near_0}:
$$
\|\overline{w}_{n+1} \|_{L^2(0,t_n; L^{2}( \TT^2 )  ) } \leq t_n^{\frac{1}{2}} \sup_{t\in [0,t_{n}]} \rho(t) \leq \frac{1}{4} \delta_n,
$$
while the the second term follows from \eqref{eq:size_of_e_n}:
$$
\|\overline{w}_{n+1} \|_{L^2(t_n,1; L^{2}( \TT^2 )  ) } \leq \frac{\delta_n}{10}.
$$

Next, let us examine the energy level of $\overline{u}_n = u_n + \overline{w}_{n+1} $. Note that due to \eqref{eq:def_psi_energy_correction}, for any $t \in  [t_{n+2},1 ]  $  we have
\begin{align}
\left| \langle u_n,\overline{w}_{n+1} \rangle\right| & = \left| \int_{\TT^2} u_n \cdot \overline{w}_{n+1} \, dx\right| \nonumber\\
 &\leq  \| u_n\|_{L^\infty(\TT^2 \times [t_{n+2},1 ] )}  {\rho(t)} \| \nabla^{\perp} \psi \|_{L^1(\TT^2 )}\nonumber\\
 & \leq C(u_n) {\rho(t)} \mu^{-1}, \label{eq:size_of_e_n_small_interaction}
\end{align}
which tends to $0$ uniformly in time as $\mu \to \infty$. With this, we consider the energy difference,
\begin{equation}\label{eq:size_of_e_n_small_interaction2}
\begin{split}
 e(t)  - \|   \overline{u}_n (t) \|_2^2 &= e(t)- \|u_n(t)\|_2^2 -\| \overline w_{n+1}(t)\|_2^2 - 2\langle u_n,\overline{w}_{n+1} \rangle.
\end{split}    
\end{equation}

Next, we will use \eqref{eq:size_of_e_n_small_interaction2} to prove \eqref{eq:energy_level_energy_correction_1} and \eqref{eq:energy_level_energy_correction_2}. Since $[t_{n+2},1 ] \setminus \mathcal{B}_n$ is closed in $[0,1]$, it follows from \ref{item:uR_3} that $e(t) - \|  {u}_n (t)\|_2^2 $ is bigger than some positive constant on that set. Now thanks to \eqref{eq:def_cutoff_energy_zero_near_0}, \eqref{eq:def_of_e_n}, and \eqref{eq:size_of_e_n_small_interaction} we may choose $ \mu>0$ sufficiently large such that \eqref{eq:energy_level_energy_correction_1} holds:
$$
 e(t)  - \|   \overline{u}_n (t) \|_2^2 >0 \quad \text{for all $t\in (0,1] \setminus \mathcal{B}_n $}.
$$
For \eqref{eq:energy_level_energy_correction_2}, by 
\eqref{eq:def_of_e_n} and \eqref{eq:size_of_w_n}, we see that the main terms in \eqref{eq:size_of_e_n_small_interaction2} satisfies
\begin{align*}
e(t)- \|u_n(t)\|_2^2 -\| \overline w_{n+1}(t)\|_2^2 &=\frac{\delta_{n+1}^2}{600\|e\|_{L^\infty (0,1)}}\big[e(t) - \|  {u}_n (t)\|_2^2\big] \\
&<  \frac{1}{500} \delta_{n+1}^2  \quad \text{for all $t\in [t_{n+1},1] \setminus \mathcal{B}_n $}.
\end{align*}
Thanks to the strict inequality above, we can thus take $ \mu>0$ sufficiently large such that \eqref{eq:size_of_e_n_small_interaction} and \eqref{eq:size_of_e_n_small_interaction2} yield
$$
e(t)  - \|   \overline{u}_n(t) \|_2^2 \leq   \frac{1}{500}\delta^2_{n+1}   \quad \text{for all $t\in [t_{n+1},1] \setminus \mathcal{B}_n $},
$$
thereby establishing \eqref{eq:energy_level_energy_correction_2}.

The remaining task is to obtain the estimate of the new stress $\overline{R}_n$ defined in \eqref{eq:def_overline_R_n}. First of all, we see that by \eqref{eq:def_psi_energy_correction} and the H\"older inequality,
\begin{align*}
\| u_n \mathring{\otimes} \overline{w}_{n+1} \|_{L^1(\TT^2 \times [0,1] ) } 
& \leq  \|u_{n}\|_{L^\infty(\TT^2 \times [0,1])}\| \overline{w}_{n+1}\|_{L^1(\TT^2 \times [0,1])}\\
&\leq C(e,u_n) \mu^{-1}, \\
\|  \mathcal{R}(  \p_t \overline w_{n+1}) \|_{L^1(\TT^2 \times [0,1] ) } &\lesssim \| \p_t \rho \|_{L^\infty([0,1])} \|\mathcal{R} \nabla^{\perp} \psi \|_{L^q(\TT^2)}\\
& \leq    C(e,u_n,    q) \mu^{1  - \frac{2}{q}}   \quad \text{for any $1<q < 2$},
\end{align*}
and
\begin{align*}
\| \overline{w}_{n+1} \mathring{\otimes} \overline{w}_{n+1} \|_{L^1(\TT^2 \times [0,1] ) }  & \leq   4 \|\overline{w}_{n+1} \|_{L^2(0,1; L^2( \TT^2 )  ) }^2 .
\end{align*}
Since $\mu>0$ is a free parameter, choosing suitably $q<2$ and taking $\mu$ sufficiently large once again, we can ensure
\begin{align}\label{eq:energy_correction_Rdri}
\| u_n \mathring{\otimes} \overline{w}_{n+1} \|_{L^1(\TT^2 \times [0,1] ) }  + \|  \mathcal{R}(  \p_t \overline w_{n+1}) \|_{L^1(\TT^2 \times [0,1] ) } \leq \delta_n^2.
\end{align}
On the other hand,   \eqref{eq:estimates_overline_w_n_1} implies 
\begin{equation}\label{eq:energy_correction_Rosc}
\| \overline{w}_{n+1} \mathring{\otimes} \overline{w}_{n+1} \|_{L^1(\TT^2 \times [0,1] ) }    \leq    \delta_n^2.   
\end{equation}

Finally, for the term involving the Laplacian in \eqref{eq:def_overline_R_n}, by \eqref{eq:appendix_R_2} and \eqref{eq:def_psi_energy_correction} we have
\begin{align}\label{eq:energy_correction_Rlap}
\|  \mathcal{R}(  \Delta \overline w_{n+1}) \|_{L^1(\TT^2 \times [0,1] ) } &= 2\|    \nabla \overline w_{n+1} \|_{L^1(\TT^2 \times [0,1] ) } \nonumber \\
& \leq  C_{\psi} \left(\int_{0}^{t_n} \rho(t)    + \sup_{t\in[t_n,1]}\rho(t)  \right)  \leq C_{\psi} {\delta_n}, 
\end{align}
where in the last step we have used bounds \eqref{eq:def_cutoff_energy}. We note that $C_{\psi}$ is a constant depending only on the profile $\psi$.

Now we add up \eqref{eq:energy_correction_Rdri}--\eqref{eq:energy_correction_Rlap} and, using the inductive assumption \ref{item:uR_2}, obtain the desired \eqref{eq:estimates_overline_R_n}:  
\begin{align*} 
\| \overline{R}_n \|_{L^1(\TT^2 \times [0,1] ) } \leq \| {R}_n \|_{L^1(\TT^2 \times [0,1] ) } + 2\delta_n^2  + C_{\psi} {\delta_n} \leq C \delta_n,
\end{align*}
for some constant $C$ independent of $n$, but dependent on $\delta_1$ (which is larger than $1$) and hence dependent on $(u_1,R_1)$.

\vspace{0.5em}
\noindent
{\bf Step 4: Error reduction  $(\overline{u}_n, \overline{R}_n) \mapsto ( {u}_{n+1},  {R}_{n+1})$}

In this step, we use the main proposition of convex integration to reduce the stress error.

We apply Proposition \ref{prop:main}  for the solution pair $(\overline{u}_n, \overline{R}_n) $ with $I= [t_{n+1}, 1]$ and parameters 
$$
\delta = \frac{1}{100}\min\big\{  \delta_{n+1} ,  \min_{t\in [t_{n+1},1]\setminus \mathcal{B}_n} \big[e(t) - \|\overline{u}_n(t)\|_2^2\big]^{\frac{1}{2}} \big\}   \quad \text{and} \quad p=p_{n+1} .
$$
where $\delta>0$ due to \eqref{eq:energy_level_energy_correction_2} and the fact that $[t_{n+1},1 ] \setminus \mathcal{B}_n$ is closed. Let us denote the obtained solution by $( {u}_{n+1},  {R}_{n+1}) $, the obtained set by $E_{n+1}$, and the velocity perturbation ${u}_{n+1} - \overline{u}_n $ by $\widetilde{w}_{n+1} $. Note that from the proposition, we have $\Supp_t \widetilde{w}_{n+1} \subset [t_{n+1} , 1]$ and the bound
\begin{equation} \label{eq:support_of_w_n+1}
 \| \widetilde{w}_{n+1}(t) \|_2 \leq \delta_{n+1}/100 \quad \text{for all  $t\not\in E_{n+1}$}   
\end{equation}
Now by \eqref{eq:main_prop_L2}, \eqref{eq:main_prop_Lp}, and \eqref{eq:estimates_overline_R_n}
we can fix $C_M \geq \delta_1^{\frac{1}{2}}$ throughout the iteration, depending only on
the geometric constant $M$ and $(u_1,R_1)$, such that 
\begin{subequations}
\begin{align}
   \|\widetilde{w}_{n+1} \|_{L^2(  \TT^2 \times [0,1] )  ) } &\leq M\|\overline{R}_n \|^{\frac{1}{2}}_{L^1(\TT^2 \times [0,1] ) } \nonumber \\
   &\leq M\cdot C^{\frac{1}{2}} \delta_n^{\frac{1}{2}}\leq \frac{1}{2}C_M\delta_n^{\frac{1}{2}}, \label{eq:estimates_overline_w_n+1_1}\\
   \| \widetilde{w}_{n+1} \|_{L^\infty(0,1; L^{p_n}( \TT^2 )  ) } & \leq  \frac{1}{2} \delta_{n+1}\leq  \frac{1}{2} \delta_{n},\label{eq:estimates_overline_w_n+1_2}
\end{align}
\end{subequations}
and
\begin{align}\label{eq:new_R_n+1}
\| R_{n+1}  \|_{L^1(  \TT^2 \times [t_{n+1},1] )  ) } \leq   \frac{1}{2}\delta_{n+1}.
\end{align}

The pair $( {u}_{n+1},  {R}_{n+1}) $ solves equation \eqref{eq:NSR} on $[0,1]\times \TT^2$ and the total stress error satisfies
$$
\| R_{n+1}\|_{L^1(  \TT^2 \times [0,1] )  ) } \leq \| R_{n+1}\|_{L^1(  \TT^2 \times [0,t_{n+1}] )  ) } + \| R_{n+1}\|_{L^1(  \TT^2 \times [t_{n+1},1] )  ) },
$$
which by \eqref{eq:def_t_n_1} and \eqref{eq:new_R_n+1} reduces to 
$$
\| R_{n+1}\|_{L^1(  \TT^2 \times [0,1] )  ) } \leq  \delta_{n+1}.
$$

In addition, due to \eqref{eq:Main_Iteration_Estimates_E^c_2}, the energy level for any $t  \in  [t_{n+1},1] \setminus  E_{n+1} $ satisfies
    \begin{align}\label{eq:estimates_energy_overline_w_n+1}
    \Big|\|u_{n+1} (t)\|_2^2  - \| \overline{u}_n   (t)\|_2^2\Big|     \leq \frac{\delta^2_{n+1}}{100^2}.
    \end{align}

Then for the inductive assumption \ref{item:uR_3}, we first observe that $e(t) - \|u_{n+1} (t)\|_2^2     >0 $ on $t\in ( 0, t_{n+1}]$ by the fact that $\Supp_t \widetilde{w}_{n+1} \subset [t_{n+1}, 1]$ and \eqref{eq:energy_level_energy_correction_1}. The same $e(t) - \|u_{n+1} (t)\|_2^2     >0 $ holds also on $t\in [t_{n+1}, 1] \setminus \mathcal{B}_{n+1}$ thanks to the definition of $\delta$ and \eqref{eq:Main_Iteration_Estimates_E^c_2} in the proposition. These establish the first bound in \ref{item:uR_3}. Similarly, the second bound in follows from \eqref{eq:estimates_energy_overline_w_n+1} and \eqref{eq:energy_level_energy_correction_2}

\vspace{0.5em}
\noindent
{\bf Step 5: Verification of inductive assumptions}

From Step 3 and Step 4, we see that $(u_{n+1}  , R_{n+1})$ satisfies the listed items \ref{item:uR_1}--\ref{item:uR_4} at level $n+1$. To close the induction argument, we only need to verify items \ref{item:w_1}, \ref{item:w_2} and \ref{item:w_3} for the total perturbation 
$$
w_{n+1}:= u_{n+1} - u_{n} = \overline{w}_{n+1} + \widetilde{w}_{n+1}.
$$
Now, we can conclude that:
\begin{itemize}
    \item Item \ref{item:w_1} follows from \eqref{eq:estimates_overline_w_n_1}--\eqref{eq:estimates_overline_w_n_2} and \eqref{eq:estimates_overline_w_n+1_1}--\eqref{eq:estimates_overline_w_n+1_2} by definition of $C_M$;
    \item Item \ref{item:w_2} is a direct consequence of \eqref{eq:def_cutoff_energy_zero_near_0}  and the definition of $\widetilde{w}_{n+1}$;
    
    \item Item \ref{item:w_3} follows from  \eqref{eq:size_of_e_n} and \eqref{eq:support_of_w_n+1}.
\end{itemize}

\vspace{0.5em}
\noindent
{\bf Step 6: Passing to the limit $n\to \infty$} 

Finally, we will conclude that the constructed sequence $(u_n ,R_n)$ converges to a weak solution $u$ of \eqref{eq:NSE}, and $u$ satisfies all the listed properties in the statement of Theorem \ref{thm:Energy_Profile_short}.

We define the final weak solution $u$ as the point-wise in time limit of $u_n$ in $L^1(\TT^2)$:
\begin{equation}
u(t) : =\lim_{n \to \infty } u_n(t),
\end{equation}
which is well-defined due to the inductive assumption \ref{item:w_1}.
In other words,
\begin{equation}
u(t) = u_1(t) + \sum_{n=2}^\infty  {w}_n (t).
\end{equation}
Since we know that $u_1 \in C([0,1] ; L^{p }(\TT^2) )\cap C^\infty(\TT^2 \times (0,T]) $, to show the claimed regularity of $u$ it suffices to prove that
$$
\sum_{n=2}^\infty  {w}_n     \in C([0,1] ; L^{p'}(\TT^2) ),  \quad  \text{for any $ p'<2$}.
$$ 
Indeed, for each $p'<2$ we can find $N_{p'}\in \NN$ such that $p_n > p'$ provided $n\geq N_{p'}$. Then by the inductive estimate \ref{item:w_1},
$$
\lim_{N\to \infty}\sum_{n=N}^\infty \|  {w}_n\|_{L^\infty L^{p'}}    \leq \lim_{N\to \infty}\sum_{n=N}^\infty\delta_{n-1} = 0.
$$
Sobolev space $L^\infty_t W^{s,1}$ regularity in Theorem~\ref{thm:KT_sharpness} similarly follows from estimates in Remark~\ref{rem:Main_Iterarion_Remark}.

Next, we show that the weak formulation holds. Indeed, for any $\varphi \in \mathcal{D}_T$, using weak formulation of \eqref{eq:NSR} or integrating by parts we obtain
\begin{equation}\label{eq:weak_formulation_u_n_R_n}
\begin{aligned}
\int_{\TT^2} u_0(x) \varphi(  x, 0 ) \, dx &= -   \int_{ \TT^2\times[0,1]} (u_n\cdot \Delta \varphi + u_n\otimes u_n : \nabla \varphi + u_n \cdot \partial_t \varphi ) \, dx dt  \\
&\qquad - \int_{ \TT^2 \times[0,1]} R : \nabla \varphi  \, dx dt .
\end{aligned}
\end{equation}
Inductive assumptions \ref{item:uR_1}--\ref{item:uR_4} imply the convergence of all terms to their natural limits in \eqref{eq:weak_formulation_u_n_R_n}. So $ u$ is a weak solution on $\TT^2 \times [0,1]$.

\vspace{0.5em}
\noindent
{\bf Step 7: Final conclusion} 

The good set in Theorems~\ref{thm:Energy_Profile_short} and \ref{thm:Main-L^2} is defined as
\[
\mathcal{G} = [0,1]\setminus \bigcup_{j\geq 1}E_j.
\]
By inductive assumption \ref{item:uR_3}, the energy achieves the prescribed level on $\mathcal{G}$:
\[
\|u(t)\|_{L^2}^2 = e(t), \qquad \forall t \in \mathcal{G}.
\]
Now by inductive assumption \ref{item:uR_4} and definition of $\delta_n$,
\[
\mathcal{L}^1([0,1] \setminus \mathcal{G}) \leq \sum_{n\geq 1}    \mathcal{L}^1(E_n) \leq \frac{\varepsilon}{2} + \sum_{n\geq 2} \delta_n = \varepsilon,
\]
which implies the  Lusin-type property for the kinetic energy in Theorem~\ref{thm:Energy_Profile_short}.

In the case when $u_0 \in L^2(\TT^2)$, the set $E_1$ is empty by definition. Recall also  that $E_n \subset [t_n,1]$ for some sequence $t_n \to 0$. Hence, employing the  inductive assumption \ref{item:uR_4} again, we obtain
\[
t^{-1}\mathcal{L}^1([0,t] \setminus \mathcal{G}) \leq t^{-1}\sum_{n:t_n<t}    \mathcal{L}^1([0,t] \cap E_n) \leq t^{-1}\sum_{n:t_n<t} t\delta_n \to 0,
\]
as $n\to \infty$, since $t_n \to 0$. This establishes density property \eqref{eq:density_L_2} in Theorem~\ref{thm:Main-L^2}.

Finally, we prove the second statement in
Remark \ref{remark:density_roughtID} concerning the $L^2$-continuity of $u$. Define the bad set
\[
\mathcal{B} = \bigcap_{n\geq 1}\bigcup_{j\geq n} E_j.
\]
By continuity of the Lebesgue measure from above,
\[
\begin{split}
\mathcal{L}^1(\mathcal{B}) = \lim_{n \to \infty } \mathcal{L}^1\big(\bigcup_{j\geq n} E_j\big)
=\lim_{n \to \infty } \varepsilon 2^{1-n} =0.
\end{split}
\]
In fact, it follows from Remark~\ref{rem:Main_Iterarion_Remark} that the Hausdorff dimension of $\mathcal{B}$ is less than or equal to $\frac{1}{9}$. Note that
\[
[0,1] \setminus \mathcal{B} = \bigcup_{n\geq 1}  \mathcal{T}_n,
\quad \text{where} \quad 
\mathcal{T}_n = \{0\}\cup [t_n,1]\setminus \bigcup_{j\geq n} E_j.
\]
Clearly $\mathcal{T}_n$ are increasing, and by inductive assumption \ref{item:w_3},
\[
\|w_j(t)\|_{L^2(\TT^2)} \leq \delta_j, \qquad \forall t \in \mathcal{T}_n, \ j \geq n.
\]
Since  $\delta_j \to 0$, we have that $u$ continuously maps $\mathcal{T}_n \to L^2(\TT^2)$ for every $n \in \NN$,  which establishes the second item in
Remark \ref{remark:density_roughtID}.

\end{proof}

\subsection{Proof of other main results}

Here we sketch the minor modifications needed to prove Theorem \ref{thm:Without_Energy_Profile} and Theorem \ref{thm:2deuler_vanishing_limit}.

\begin{proof}[Proof of Theorem \ref{thm:Without_Energy_Profile}]
We can use the same iteration as in Subsection~\ref{sec:Proof_of_main_nonuniqueness_results},  but without the inductive items \ref{item:uR_3} and \ref{item:w_3} which are only related to the kinetic energy. 

Take the initial solution $u_1$ to be $v$ and compute the associated Reynolds stress $R_1$. The smallness of $u-v$ in any Sobolev norm is then a consequence \eqref{eq:Main_Iteration_Estimates_E^c}, which we can add to the inductive assumptions for $(u_n, R_n)$. To conclude, we just delete Step 2 of energy correction from the proof and pass to the limit as before.

\end{proof}

\begin{proof}[Proof of Theorem \ref{thm:2deuler_vanishing_limit}]
We use a standard approximation argument. First, we note that Proposition~\ref{prop:main} holds for the Euler equations, and hence we can construct a weak solution of the 2D Euler equations $u\in C([0,T];W^{s,1}(\TT^2))$ for all $s<1$ with $\curl u(0) \in L^p(\TT^2)$ for some $p>1$ that does not conserve energy.

Now for each value of the viscosity $0<\nu<1$ we  mollify the Euler solution $u$ at a lengthscale $\sim \nu$ to obtain a smooth vector field $v^{\nu} $ which we use as the initial solution to the Navier-Stokes-Reynolds system $u_1:=v^{\nu}$ with an appropriate Reynolds stress $R_1$. Omitting the energy correction step again, we can construct a solution of the Navier-Stokes equations $u^\nu$ with
\[
\|u^\nu - v^{\nu}\|_{L^\infty(0,T; L^{2-\nu}(\TT^2))} \leq \nu.
\]
Hence the family of Navier-Stokes solutions $u^\nu$ approaches $u$ in the desired spaces:
\[
\|u^\nu - u\|_{L^\infty(0,T; L^{p'}(\TT^2))}\leq \|u^\nu - v^\nu\|_{L^\infty L^{p'}}+ \|u^\nu - v^\nu\|_{L^\infty L^{p'}} \to 0 \qquad \text{as} \quad \nu \to 0,
\]
for all $p'<2$.

\end{proof}

\section{Proof of main proposition: velocity perturbation}\label{sec:proof_main_prop}

In this section, we initiate the proof Proposition \ref{prop:main}. Typical in any convex integration scheme, the goal is to design a suitable velocity perturbation $w$ so that the new solution 
$$
u_1 : = u  + w
$$
solves the equation \eqref{eq:NSR} with a much smaller Reynolds stress $R_1$.  

The main objective here is to introduce the necessary preparation to define the velocity perturbation and the associated Reynolds stress. The full estimates claimed in Proposition \ref{prop:main} will be verified in the next section.

\subsection{Building blocks in space}

The velocity perturbation will consist of suitable superpositions of a family of vector fields with coefficients depending smoothly on the given stress error $R$.  We recall a crucial lemma from \cite{MR3614753} which motivates the construction of our building blocks in the convex integration.

Recall that $ \mathcal{S}^{2 \times 2}_+$ denotes the set of positive definite symmetric $2\times 2$ matrices and $\ek : = \frac{k}{|k|}$ for any $k \in  \ZZ^2$. For any matrix $A \in \RR^{2\times 2}$, we use the norm $|A| := \sup_{|x|=1} Ax$ for definiteness (the choice is insignificant for our purpose).

\begin{lemma}\label{lemma:geometric}
Let $ B_{\frac{1}{2}}(\Id)$ denote the ball of radius $1/2$ centered at identity Id in $ \mathcal{S}^{2 \times 2}_+$. There exists a finite set $\L \subset \ZZ^2$ and smooth functions $\Gamma_k \in C^\infty({B}_{\frac{1}{2}}(\Id) )$ for any $k \in \L$ such that 
\begin{align*}
R = \sum_{k \in \L } \Gamma_k^2(R)  \ek   \otimes  \ek  \qquad \text{for all } \, R \in {B_{\frac{1}{2}}(\Id)}.
\end{align*}

\end{lemma}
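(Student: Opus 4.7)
My plan is to exhibit an explicit finite set $\L$ of nonzero lattice directions for which the rank-one matrices $\{\ek \otimes \ek\}_{k \in \L}$ span $\mathcal{S}^{2 \times 2}$ and for which the identity lies strictly inside the positive cone they generate, and then promote this to a smooth decomposition of every $R \in B_{1/2}(\Id)$ via linear algebra.

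First I would try the trial set $\L_0 = \{(1,0),(0,1),(1,1),(1,-1)\}$. A direct computation gives the two decompositions $\e_{(1,0)} \otimes \e_{(1,0)} + \e_{(0,1)} \otimes \e_{(0,1)} = \Id$ and $\e_{(1,1)} \otimes \e_{(1,1)} + \e_{(1,-1)} \otimes \e_{(1,-1)} = \Id$, so averaging with equal weights yields the positive decomposition $\Id = \tfrac{1}{2}\sum_{k \in \L_0} \ek \otimes \ek$. The four matrices $\ek \otimes \ek$, $k \in \L_0$, moreover span the three-dimensional space $\mathcal{S}^{2 \times 2}$, as can be checked by inspection in the coordinates $(a,b,c) \leftrightarrow \bigl(\begin{smallmatrix} a & c \\ c & b \end{smallmatrix}\bigr)$.

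Next, since the span is surjective I would choose a linear right inverse $M : \mathcal{S}^{2 \times 2} \to \RR^{\L_0}$ satisfying $R = \sum_{k \in \L_0} M_k(R)\, \ek \otimes \ek$ for every symmetric $R$; for concreteness I would take the Moore--Penrose pseudoinverse, though any smooth linear selection works. Translating the decomposition of $\Id$ gives $\lambda_k(R) := \tfrac{1}{2} + M_k(R - \Id)$, which is affine in $R$ and equal to $\tfrac{1}{2}$ at $R = \Id$. On any neighborhood of $\Id$ on which all $\lambda_k$ are strictly positive, I would set $\Gamma_k(R) := \sqrt{\lambda_k(R)}$, which is then smooth there.

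The key step, and the one that I expect to be the main obstacle, is ensuring that strict positivity holds on all of $B_{1/2}(\Id)$ and not merely on a smaller neighborhood. Quantitatively, this reduces to the estimate $\min_k \lambda_k^0 > \tfrac{1}{2}\,\|M\|_{\rm op}$, which need not hold for $\L_0$ alone. If it fails, I would enlarge $\L$ by adjoining additional lattice directions (for example $(2,1), (1,2), (2,-1), (1,-2)$, together with further higher-order representatives) and correspondingly enlarge the decomposition of $\Id$: since each new $\ek \otimes \ek$ lies in the interior of the positive cone $\mathcal{S}^{2\times 2}_+$ containing $\Id$, I can assign it a small positive coefficient while preserving a strictly positive decomposition $\Id = \sum_{k \in \L} \lambda_k^0\, \ek \otimes \ek$. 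Adding directions enlarges the kernel of the span map and gives more freedom to choose $M$ with smaller operator norm, so a quantitative tuning of these coefficients produces an $\L$ and an $M$ for which $\lambda_k(R) > 0$ throughout $B_{1/2}(\Id)$, completing the construction.
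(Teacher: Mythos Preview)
The paper does not supply its own proof of this lemma; it is simply quoted from \cite{MR3614753} as a known ingredient of the convex integration machinery. There is therefore no argument in the paper to compare your proposal against.

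That said, your approach is the standard one and is correct. In fact your trial set $\L_0=\{(1,0),(0,1),(1,1),(1,-1)\}$ already does the job on the open ball, so the enlargement step you flag as the ``main obstacle'' is unnecessary---provided you pick the right inverse by hand rather than defaulting to Moore--Penrose. Writing $R=\bigl(\begin{smallmatrix} a & c\\ c & b\end{smallmatrix}\bigr)$, the affine coefficients
\[
\lambda_{(1,0)}=a-\tfrac12,\quad \lambda_{(0,1)}=b-\tfrac12,\quad \lambda_{(1,1)}=\tfrac12+c,\quad \lambda_{(1,-1)}=\tfrac12-c
\]
satisfy $\sum_{k\in\L_0}\lambda_k\,\ek\otimes\ek=R$ identically, and $|R-\Id|<\tfrac12$ forces $|a-1|,|b-1|,|c|<\tfrac12$, so each $\lambda_k$ is strictly positive and $\Gamma_k:=\sqrt{\lambda_k}$ is smooth. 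The one-dimensional kernel of the span map is precisely the freedom you should exploit to balance the margins; the pseudoinverse wastes it by minimising $\ell^2$-norm instead.
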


To utilize Lemma \ref{lemma:geometric}, we need to choose proper cutoffs functions for the stress error $R $. Let $\chi: \RR^{2\times 2}  \to \RR^+$ be a positive smooth function such that $\chi $ is monotone increasing with respect to $|x|$ and
\begin{equation}
\chi (x) =
\begin{cases}
2 & \text{if $0 \leq |x| \leq \frac{1}{2}\| R\|_{L^1( \TT^2 \times I )}$}\\
2|x| & \text{if $|x| \geq \| R\|_{L^1( \TT^2 \times I )}$}.
\end{cases}
\end{equation}

Let $\rho \in  C^\infty(\TT^2  \times  [0,1] )$ be
\begin{equation*} 
\rho =  \chi( R )  .
\end{equation*}
Then we see that 
$$
\Id - \frac{ R }{\rho} \in B_{\frac{1}{2}}(\Id) \qquad \text{for any $(x,t) \in   \TT^2 \times  [0,1]$,}
$$
so we can use it as the arguments in the amplitude functions given by Lemma \ref{lemma:geometric}. In particular, we have the following useful decomposition
\begin{equation}
\rho \Id - R =\sum_{k \in \L } R_k,
\end{equation}
where the ``primitive'' stresses $R_k:\TT^2 \times [0,1] \to \mathcal{S}^{2\times 2}_+$ are given by
\begin{equation}\label{eq:def_R_k}
R_k:= \rho \Gamma_k^2\Big(\Id - \frac{  R }{\rho }\Big) \ek \otimes \ek.
\end{equation}

The main building block of the convex integration scheme is a family of periodic accelerating jet flows $\mathbf{W}_{k}:\TT^2\times [0,1] \to \RR^2 $. This construction builds on previous work of
Buckmaster, Colombo, and Vicol \cite{1809.00600} 
and our temporal intermittency framework used in \cite{2004.09538, 2009.06596}.

We first construct a family of autonomous vector fields $W_k$ and then use intermittent temporal phase shifts $\Phi_k:\TT^2\times [0,1] \to \TT^2$ to obtain the final jets $\bwk:=W_k\circ \Phi_k $.

\begin{theorem}[Stationary jets]\label{thm:main_thm_for_W_k}
Let $\L\subset \ZZ^2 $ be a finite set. There exists $\mu_0>0$ such that for any $\nu, \mu \in \RR $ with $\mu_0 <\nu \leq \mu $ the following holds.

For every $k\in \L $, there exist smooth vector fields $ W_{k}, W_{k}^{(c)} \in C^\infty_0(\TT^2, \RR^2)$ with the following properties.
\begin{enumerate}
\item Each $ W_{k}+ W_{k}^{(c)}$ is divergence-free on $\TT^2$, and there is a stream function $  \Psi_k \in C^\infty_0(\TT^2 )$ such that
\[
W_k + W_k^{(c)}= \nabla^\perp  \Psi_k,
\]
and for each $k \in \L$
\[
 \ek \parallel W_k \quad \text{and}\quad \ek \perp W_k^{(c)},
\]
where $\ek=\frac{k}{|k|}$.

 \item Each $W_k $ has disjoint support
     $$
    \Supp W_k  \cap \Supp {W}_{k'} = \emptyset \quad \text{if $k \neq k'$}.
    $$
and for any $ k\in \L$, the following identities hold
\begin{equation} \label{eq:(WtimesW)_self_interaction}
\fint_{\TT^2} W_k \otimes W_k = \ek  \otimes \ek, 
\end{equation}
\begin{equation} \label{eq:div(WtimesW)_identity}
 \D ( W_k \otimes W_k) = \ek  \cdot \nabla |W_k|^2 \ek. 
\end{equation}

\item
 For any $1\leq p \leq \infty$, the estimates
 \begin{equation}\label{eq:W_k_estimates}
\begin{aligned}
\| W_k \|_{L^p(\TT^2)} +  {\mu}^{-1} \| \nabla W_k \|_{L^p(\TT^2)} &\lesssim (\nu \mu)^{\frac{1}{2}  - \frac{1}{p}}, \\
\| W_k^{(c)} \|_{L^p(\TT^2)} +  {\mu}^{-1} \| \nabla W_k^{(c)} \|_{L^p(\TT^2)} & \lesssim \nu \mu^{-1}  (\nu \mu)^{\frac{1}{2}  - \frac{1}{p}},
\end{aligned}  
 \end{equation}
and 
\begin{equation} \label{eq:stream_Phi_k_estimates}
\| \Psi_k \|_{L^p(\TT^2)} \lesssim \mu^{-1}(\nu \mu)^{\frac{1}{2} - \frac1p},
\end{equation}
holds uniformly in $\nu, \mu$.

\end{enumerate}
\end{theorem}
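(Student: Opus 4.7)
Modelled on the pipe flow constructions of \cite{1809.00600, 2009.06596}, the plan is to realize $W_k$ and $W_k^{(c)}$ as the two orthogonal components of $\nabla^\perp \Psi_k$ for a single smooth stream function $\Psi_k$ localized in a small rectangle of $\TT^2$. This immediately secures the divergence-free condition $\D(W_k + W_k^{(c)}) = 0$ and the prescribed parallel/perpendicular structure. The rectangle will be thin at scale $\mu^{-1}$ in the $\ek^\perp$ direction and less concentrated at scale $\nu^{-1}$ in the $\ek$ direction; since $\nu \leq \mu$, all gradient estimates will be dominated by the transverse derivatives.

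Concretely, I would fix two one-dimensional profiles $\phi, \psi \in C^\infty_c((0,1))$ normalized so that $\|\phi\|_{L^2(\RR)}\|\psi'\|_{L^2(\RR)} = 1$. For each $k \in \L$ select an anchor $x_k$ in the interior of the fundamental domain $(0,1)^2$ so that the rotated rectangles
\[
Q_k := x_k + \{\, s\nu^{-1}\ek + t\mu^{-1}\ek^\perp : s,t \in (0,1)\,\}
\]
are pairwise disjoint and strictly contained in $(0,1)^2$; since $\L$ is finite and $|Q_k| = (\nu\mu)^{-1}$, this packing is possible whenever $\nu\mu \geq C(|\L|)$, which selects the threshold $\mu_0$. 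Set the amplitude $A := (\nu/\mu)^{1/2}$ and define
\[
\Psi_k(x) := A\, \phi\bigl(\nu(x-x_k)\cdot \ek\bigr)\,\psi\bigl(\mu(x-x_k)\cdot \ek^\perp\bigr),
\]
extended by zero outside $Q_k$ to obtain a smooth $\TT^2$-periodic function. Put
\[
W_k := -(\partial_{\ek^\perp}\Psi_k)\,\ek, \qquad W_k^{(c)} := (\partial_{\ek}\Psi_k)\,\ek^\perp,
\]
so that $W_k + W_k^{(c)} = \nabla^\perp \Psi_k$, all three fields are supported in $Q_k$, and the parallel/perpendicular requirements hold by construction.

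The algebraic identities follow immediately: writing $W_k = f_k \ek$ with $f_k := -\partial_{\ek^\perp}\Psi_k$, one has $W_k \otimes W_k = f_k^2\, \ek \otimes \ek$, hence $\D(W_k \otimes W_k) = (\ek \cdot \nabla f_k^2)\,\ek = (\ek \cdot \nabla |W_k|^2)\,\ek$, which is \eqref{eq:div(WtimesW)_identity}. For \eqref{eq:(WtimesW)_self_interaction}, Fubini gives
\[
\int_{\TT^2}|W_k|^2\,dx = A^2\mu^2 \cdot \nu^{-1}\|\phi\|_{L^2}^2 \cdot \mu^{-1}\|\psi'\|_{L^2}^2 = \|\phi\|_{L^2}^2\|\psi'\|_{L^2}^2 = 1,
\]
yielding $\fint_{\TT^2} W_k \otimes W_k = \ek \otimes \ek$. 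The $L^p$ estimates reduce, via the one-dimensional scaling $\|g(N\cdot)\|_{L^p(\RR)} = N^{-1/p}\|g\|_{L^p(\RR)}$ and Fubini, to
\[
\|W_k\|_{L^p} \lesssim A\mu(\nu\mu)^{-1/p} = (\nu\mu)^{1/2-1/p}, \qquad \|W_k^{(c)}\|_{L^p} \lesssim A\nu(\nu\mu)^{-1/p} = \tfrac{\nu}{\mu}(\nu\mu)^{1/2-1/p},
\]
and $\|\Psi_k\|_{L^p} \lesssim A(\nu\mu)^{-1/p} = \mu^{-1}(\nu\mu)^{1/2-1/p}$. The gradient bounds in \eqref{eq:W_k_estimates} follow in the same way, since each $\partial_{\ek^\perp}$ derivative costs a factor of $\mu$ while each $\partial_{\ek}$ costs only $\nu \leq \mu$.

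The proof is not conceptually deep; the one real decision is the choice $A = (\nu/\mu)^{1/2}$, which is simultaneously forced by the hard normalization $\fint |W_k|^2 = 1$ and by the requirement that $\|W_k\|_{L^p}$ scale precisely as $(\nu\mu)^{1/2-1/p}$. The principal care one must take — and essentially the only non-trivial point — is selecting the parameter threshold $\mu_0$ large enough to accommodate the packing of the $|\L|$ disjoint rectangles $Q_k$ in the fundamental domain, while keeping every estimate uniform in $\nu, \mu$ as they grow through the convex integration iteration.
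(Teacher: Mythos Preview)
Your proposal is correct and follows essentially the same construction as the paper: a tensor-product stream function $\Psi_k \sim (\nu/\mu)^{1/2}\phi(\nu\,\cdot)\psi(\mu\,\cdot)$ localized near distinct centers, with $W_k$ and $W_k^{(c)}$ read off as the $\ek$ and $\ek^\perp$ components of $\nabla^\perp\Psi_k$, and all estimates obtained by one-dimensional scaling. One small slip: the disjointness of the supports $Q_k$ is governed by their \emph{diameter} $\sim\nu^{-1}$, not their measure $(\nu\mu)^{-1}$, so the correct threshold is $\nu>\mu_0$ with $\mu_0$ chosen from the minimal anchor separation (as in the theorem statement), rather than the condition $\nu\mu\geq C(|\L|)$ you wrote.
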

\begin{proof}

We choose a collection of distinct points $p_{   k} \in [0,1]^2 $ for $k \in \L $ and a number $\mu_0 >0$ such that
$$
\bigcup_{k\in \L} B_{\frac{2 }{\mu_0} }(p_k) \subset [0,1]^2,
$$
where $B_{\frac{2 }{\mu_0} }(p_k)$ denotes the ball of radius $\frac{2 }{\mu_0} $ and center $p_k$ and
\begin{equation}\label{eq:ep_0_pi_pj}
\dist(p_i, p_{j}) >   \frac{2}{\mu_0}  \quad \text{if} \quad  i\neq j . 
\end{equation}
The points $p_i$ will be the centers of $ W_k$ and $ W_k^c $.

For $k \in \L $, let us introduce unit vectors $\ek, \ek^\perp \in \RR^2$  by
\begin{equation}
\ek = \frac{k}{|k|}, \qquad  \ek^\perp = \frac{1}{|k|}(-k_2, k_1 ) ,
\end{equation}
and their associated coordinates: for any $x \in \RR^2$,
\begin{align*}
x_k &= (x- p_k)\cdot \mathbf{e}_k, \\
y_k & = (x- p_k)\cdot \mathbf{e}_k^\perp.
\end{align*}

Now we choose compactly supported nontrivial $\varphi, \psi \in C_c^\infty((- \frac{1 }{\mu_0},\frac{1 }{\mu_0}))$ and define non-periodic potentials $\widetilde\Psi_k \in C^\infty_c(\RR^2)$ and vector fields $\widetilde W_k, \widetilde W_k^{(c)} \in C^\infty_c(\RR^2) $
\begin{align*}
\widetilde \Psi_k &= c \mu^{-1}(\nu\mu)^{\frac{1}{2}} \varphi(\nu x_k) \psi(\mu y_k),\\
\widetilde W_k &= -c(\nu\mu)^{\frac{1}{2}} \varphi(\nu x_k) \psi'(\mu y_k) \ek,\\
\widetilde W_k^{(c)} &= c \nu \mu^{-1}(\nu\mu)^{\frac{1}{2}} \varphi'(\nu x_k) \psi(\mu y_k) \ek^\perp,
\end{align*}
where $c>0$ is a normalizing constant such that \eqref{eq:(WtimesW)_self_interaction} holds. 
Periodizing, we obtain
\[
\Psi_k(x) = \sum_{n\in \ZZ^2} \widetilde \Psi_k(x+n), \quad W_k(x) = \sum_{n\in \ZZ^2}  \widetilde W_k(x+n), \quad W_k^{(c)}(x) = \sum_{n\in \ZZ^2}  \widetilde W_k^{(c)}(x+n),
\]
which will be used as periodic building blocks, as we can now identify them with corresponding functions on $\TT^2$:
\[
\Psi_k: \TT^2 \to \RR, \qquad W_k: \TT^2 \to \RR^2, \qquad W_k^{(c)}: \TT^2 \to \RR^2.
\]

Note that
\[
\begin{split}
\nabla^\perp  \Psi_k &= -\p_{y_k} \Psi_k \ek +  \p_{x_k} \Psi_k \ek^\perp\\
&=  W_k + W_k^{(c)},
\end{split}
\]
and the rest of the conclusions follow trivially by direct computations as well.

\end{proof}

\begin{remark}
It is clear that the derivative of $\Psi_k$ in the direction $\ek$ is of order $  \nu$ (rather than $  \mu$ for the full gradient):
\begin{equation} \label{eq:e_k_derivative_of_Psi}
\|(\ek \cdot \nabla)\Psi_k \|_{L^r(
\TT^2)} \lesssim  \nu \cdot \mu^{-1}(\nu \mu)^{\frac{1}{2} - \frac1r}.  
\end{equation}
This fact will be used in Lemma \ref{lemma:Linear_error} later.
\end{remark}

Apparently, $W_k$ or $W_k+ W_k^{(c)}$ are not approximate solutions of the Navier-Stokes (or Euler) equations due to \eqref{eq:div(WtimesW)_identity}. This error term can not be absorbed by a pressure gradient due to its anisotropic nature. However, this very identity \eqref{eq:div(WtimesW)_identity} allows for letting $W_k$ travel along the geodesics on $\TT^2$ so that \eqref{eq:div(WtimesW)_identity} can be balanced by the acceleration of a small corrector.

Letting $W_k$ move along the geodesics creates one addition problem in 2D: without a suitable discretization of the temporal velocity, different $W_k$ might collide with each other and thus create harmful interactions. We will use a temporal concentration mechanism to get around this issue.

\subsection{Phase shift by acceleration}\label{subsec:phaseshift}
Next, we introduce a simple method to avoid the collision of the support sets of different $W_k$. This is based on our previous construction of temporal concentration in \cite{2009.06596}. We concentrate each $W_k$ on disjoint time intervals so that they have disjoint supports in $\TT^2 \times [0,1]$. The size of those time intervals then determines the level of temporal concentration.

Now we turn to the specifics. Let us first choose temporal functions $ g_{k}$ and $h_k$ to oscillate the building blocks $ W_k$ intermittently in time.
Let $G \in C_c^\infty(0,1)$ be such that
\begin{equation}\label{eq:def_G_profile}
\int_{0}^1 G^2(t) \, dt =1, \qquad \int_{0}^1 G (t) \, dt =0.    
\end{equation}

For any $\kappa \geq 1$, we define $\tilde g_k : [0,1] \to \RR$ as the $1$-periodic extension of  $\kappa^{1/2} G (\kappa (t - t_k) )$, where $t_k$ are chosen so that $g_k$ have disjoint supports for different $k$. In other words,
\begin{equation} \label{eq:def_of_tilde_g_k}
\tilde g_k(t) =   \sum_{n \in \ZZ}\kappa^{1/2} G (n + \kappa (t - t_k) ).
\end{equation}
In the convex integration scheme below, we will also oscillate the velocity perturbation at a large frequency $\sigma \in \NN$. So we define
\begin{equation}\label{eq:def_g_k}
g_k(t) =  \tilde g_k(\sigma t).
\end{equation}
Note that $g_k$ concentrates on different small time intervals, and we have
\begin{equation} \label{eq:L^p_bpund_g_k}
\| g_k \|_{W^{n,p}([0,1])} \lesssim (\sigma \kappa)^n \kappa^{\frac{1}{2} - \frac{1}{p}}\quad \text{for all $1\leq p \leq \infty$}, \ n \in \mathbb{N}.
\end{equation}

\begin{remark} \label{rem:Disjoint_supports}
With $g_k$ defined, we will essentially use $g_k W_k $ as the ``building blocks'' for the convex integration. Due to the temporal concentration of $g_k$, all $g_k W_k$ have disjoint supports on $\TT^2 \times [0,1]$, and hence there will be no interference between each different $W_k$.
\end{remark}

For the corrector term that we will be using to balance \eqref{eq:div(WtimesW)_identity}, define $h_k :[0,1] \to \RR $ by
$$
h_k(t)  = \int_0^{\sigma t} (\tilde g_k^2(s) -1) \, ds.
$$
In view of the zero-mean condition for $g_k^2(t) -1$, these $h_k$ are $\sigma^{-1}$-periodic on $[0,1]$ and we have
\begin{equation} \label{eq:bound_on_h_k}
\| h_k \|_{L^\infty([0,1])} \leq 1
\end{equation}
uniformly in $\kappa$.
Moreover, we have the identity
\begin{equation} \label{eq:time_derivative_of_h_k}
 \p_t \left( \sigma^{-1} h_k\right) = g_k^2 -1,
\end{equation}
which will imply the smallness of the temporal oscillation corrector, cf \eqref{eq:def_w_t}.

\subsection{Accelerating jets}

Let us recall the five parameters used for the building blocks.
\begin{itemize}
    \item spatial concentrations $\mu \geq 1$ and $\nu \geq 1 $: these are reciprocals of the longitudinal and lateral dimension of the building blocks $W_k $. 
    
    \item Oscillation $\sigma \in \NN$: we use it to oscillate the ``building blocks'' $g_k W_k$ both in space and in time so that the stress error $R$ is canceled weakly.
    
    \item Temporal concentration $\kappa \geq 1$: this parameter models the concentration in time of the perturbation (through $g_k$), which is crucial for controlling the new stress error emanating from the Laplacian.

    \item Acceleration $\omega \geq 1$: this parameter represents the acceleration of the flow in the building block, we use it to define the phase shift in \eqref{eq:def_of_phi_k}.

\end{itemize}

To make our notation more compact, let us introduce the phase function $\Phi_k : \TT^2 \times [0,1] \to \TT^2$ defined by
\begin{equation}\label{eq:def_Phi_k}
\Phi_k : (x  ,  t   ) \mapsto \sigma x + \phi_k(  t) \ek,   
\end{equation}
where $\phi_k(t)$ is defined by the relation
\begin{equation} \label{eq:def_of_phi_k}
\phi_k'(t)= \omega  g_k( t).
\end{equation}
Note that due to the zero-mean condition in \eqref{eq:def_G_profile}, such $\phi_k$ always exists and for definiteness, we fix one of such choices throughout the construction.

By design, $\Phi_k$ is $\sigma^{-1}\TT^2$-periodic in space and $\sigma^{-1}$-periodic in time. Also, by definition of the phase shift \eqref{eq:def_of_phi_k}, we have the important identities
\begin{equation} \label{eq:Phi_k_identities}
\begin{aligned}
\nabla \left(  f \circ \Phi_k \right)
&= \sigma\nabla f \circ \Phi_k, \\
\p_t \left(  f \circ \Phi_k \right) &=  \phi_k' \left( \ek \cdot \nabla  f \right)\circ \Phi_k =\sigma^{-1}  \omega  g_k \ek \cdot \nabla\left(  f \circ \Phi_k\right).
\end{aligned}
\end{equation}

Now we will let the stationary flows $W_k$ travel along $\TT^2$ in time, relating the velocity of the moving support sets  to the intermittent oscillator $g_k$. More precisely, we define
\begin{equation} \label{eq:W_k_W_kcPhi_k_definitions}
\begin{aligned}
\bwk (x,t ) : = W_k \circ \Phi_k =   W_k (\sigma x + \phi_k(t) \ek),\\
\bwk^{(c)}(x,t ) : = W_k^{(c)} \circ \Phi_k = W_k^{(c)} (\sigma x + \phi_k(t) \ek), \\
\bpk(x,t )  : = \Psi_k \circ \Phi_k =  \Psi_k (\sigma x + \phi_k(t) \ek).
\end{aligned}
\end{equation}
Hence
\begin{equation} \label{eq:grad_perp_stream_function}
\sigma^{-1} \nabla^{\perp} \bpk = \bwk + \bwk^{(c)}.
\end{equation}
Also, thanks to \eqref{eq:Phi_k_identities}  and  \eqref{eq:div(WtimesW)_identity}, we have the important identities
\begin{equation}\label{eq:acc_balance_error}
\begin{split}
\p_t | \bwk |^2 \ek  
&=  \sigma^{-1} \omega g_k  \D(\bwk \otimes \bwk)
\end{split},
\end{equation}
and
\begin{equation} \label{eq:p_t_of_Psi}
\begin{split}
\p_t \bpk  
&=  \sigma^{-1} \omega g_k  (\ek \cdot \nabla  )\bpk,
\end{split}
\end{equation}

\subsection{The velocity perturbation}

We are in the position to define the velocity perturbation. In summary, the perturbation consists of three parts:
\begin{equation}
w = w^{(p)} + w^{(c)} + w^{(t)},
\end{equation}
where $w^{(p)} $ is the principle part, accounting for the main contribution in the nonlinear term, $w^{(c)}$ is an incompresiblity corrector, rectifying the divergence of $w^{(p)} $, and $w^{(t)}$ is a temporal corrector with zero divergence, balancing the errors introduced by \eqref{eq:div(WtimesW)_identity} and \eqref{eq:def_Phi_k} below.

Next, we choose a cutoff for the velocity perturbation so that it lives strictly within the interval $I$. Since the stress error might accumulate near the endpoints of $I$, the cutoff will be sufficiently sharp so that the ``leftover'' error is negligible.
Let $\theta \in C^\infty_c(\RR)$ be a smooth temporal cutoff function such that $\|\theta\|_{L^\infty} \leq 1$ and
\begin{equation} \label{eq:def_theta}
\theta(t) = 
\begin{cases}
1 & \text{if }\dist(t, I^c ) \geq  \frac{  \delta  }{ 8  \| R\|_{L^\infty( \TT^2  \times   I)}}  \\
0 & \text{if }\dist(t, I^c ) \leq   \frac{  \delta  }{ 16  \| R\|_{L^\infty(\TT^2  \times  I )}}   . 
\end{cases}
\end{equation}

In view of \eqref{eq:def_R_k}, the amplitude functions of the perturbation are given by
\begin{equation}\label{eq:def_a_k}
a_k =  \theta   \rho^{\nicefrac{1}{2}} \Gamma_k\Big(\Id - \frac{  R }{\rho }\Big).
\end{equation}

The principle part of the perturbation consists of super-positions of the building blocks $\bwk  $ oscillating with period $\sigma^{-1}$ on $\TT^2$ and traveling with a velocity $ \phi_k'(t)$.
 
\begin{equation}\label{eq:def_w_p}
w^{(p)} (x  ,  t   ): = \sum_{k \in \L } a_k(x  ,  t   ) g_k( t)  \bwk  .
\end{equation}
In what follows, we will omit the set of the summation indexes $k\in \L$ so that
$$
w^{(p)}   = \sum_{k   } a_k g_k \bwk  .
$$

Note that \eqref{eq:def_w_p} is not divergence-free. To fix this, we introduce a divergence-free corrector

\[
w^{(c)} (x  ,  t   ): = \sum_{k } a_k(x  ,  t   ) g_k(t) \bwk^{(c)}   +\sigma^{-1} \nabla^\perp a_k(x  ,  t   ) g_k(t) \bpk  .
\]
Then, thanks to \eqref{eq:grad_perp_stream_function}, we have
\begin{equation}\label{def_of_w^p+w^c}
\begin{split}
w^{(p)}   +w^{(c)}   &= \sigma^{-1}\sum_{k  } g_k(t) \left[a_k \nabla^\perp \bpk  +  \nabla^\perp a_k \bpk  \right]\\
&= \sigma^{-1}  \sum_{k  } \nabla^\perp \big[a_k g_k  \bpk \big].
\end{split}
\end{equation}

Finally, we define a temporal corrector $w^{(t)}$. The goal of this corrector $w^{(t)}$ is two-fold: $(i)$ to balance the high temporal frequency part of the stress error; and $(ii)$ to balance the error in the interaction due to the acceleration in the phase shifts $\Phi_k$.
 
To introduce these correctors, we first recall the well-known Leray projection on $\TT^2$.
\begin{definition}[Leray projection]\label{def:leray_projection}
Let $v \in C^\infty(\TT^2,\RR^2)$ be a smooth vector field. Define the operator $\mathcal{Q}$ as
$$
\mathcal{Q} v:= \nabla f + \fint_{\TT^2}v,
$$
where $ f \in C^\infty(\TT^2)$ is the unique smooth zero-mean solution of 
$$
\Delta f =\D v, \qquad x \in \TT^2.
$$
Furthermore, let $\mathcal{P} = \Id -\mathcal{Q}$ be the Leray projection onto divergence-free vector fields with zero mean.
\end{definition}

The temporal corrector $w^{(t)}$ is defined as
\begin{equation}\label{eq:def_w_t}
w^{(t)} = w^{(o)} + w^{(a)}, 
\end{equation}
where $w^{(o)}$ is the temporal oscillation corrector 
\begin{equation}
\begin{aligned} \label{eq:def_w_o}
w^{(o)} (x  ,  t   ) & :=- \sigma^{-1} \theta^2  \mathcal{P}\sum_{k\in \L }    h_k(t)   \D R_k     ,
\end{aligned}
\end{equation}
and $w^{(a)}$ is the acceleration corrector 
\begin{equation}\label{eq:def_w_a}
w^{(a)} (x  ,  t   ): = -\omega^{-1} \sigma  \mathcal{P}\sum_{k \in \L } a_k(x  ,  t   )^2 g_k(  t)   |\bwk|^2 \ek .
\end{equation}

Let us say a few words about the corrector $w^{(t)} $. The corrector $ w^{(t)}$ will be used to balance part of the new stress error through its temporal derivative $\p_t w^{(t)} $, but the roles of each part $ w^{(o)}$  and $w^{(a)} $ are very different. To see the leading order temporal derivative of $w^{(a)}$, 
thanks to fact that $\phi'(t)=\omega g(t)$ and due to \eqref{eq:acc_balance_error}, we have the heuristic
\[
\p_t w^{(a)}   = -\mathcal{P}\sum_{k } a_k^2g_k^2 \D \big(  \bwk \otimes \bwk   \big) +
\text{lower order terms},
\]
which is needed for a cancellation in the oscillation error. On the other hand, thanks to \eqref{eq:time_derivative_of_h_k},
$$
\p_t w^{(o)}     =   -\theta^2  \mathcal{P}  \Big(  \sum_{k\in \L }    (g_k^2 -1) \D R_k \Big)     + \text{lower order terms},
$$
which we use to cancel a highly time oscillating remainder of the Reynolds stress, specific to the use of temporal intermittency in our scheme.  We compute this in the following lemma.

\begin{lemma}\label{lemma:a_k_interactions}
For any $(x,t) \in \TT^2 \times [0,1]$, there holds
$$
 \sum_{k \in \L} a_k^2 g_k^2 \fint_{\TT^2}  \bwk \otimes \bwk \, dx = \theta^2  \left(   \rho \Id   - R
 \right)  + \theta^2  \sum_{k\in \L }   (g_{k}^2-1 ) R_k,
$$
where $R_k$ is defined by \eqref{eq:def_R_k}.
\end{lemma}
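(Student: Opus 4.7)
The plan is to reduce the claim to a direct algebraic identity built from three ingredients already at our disposal: the geometric decomposition of Lemma~\ref{lemma:geometric}, the self-interaction identity \eqref{eq:(WtimesW)_self_interaction} for the stationary jets, and the definition \eqref{eq:def_a_k} of the amplitudes $a_k$. There is no analytic content here, only bookkeeping.

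First I would evaluate the mean $\fint_{\TT^2} \bwk\otimes \bwk\,dx$. Since $\bwk(x,t) = W_k(\sigma x + \phi_k(t)\ek)$ and $W_k$ is $\TT^2$-periodic, a change of variables $y = \sigma x + \phi_k(t)\ek$ (using that $\sigma \in \NN$, so $\sigma x$ wraps $\sigma^2$ times around the torus, and that the phase shift is merely a translation) yields
\begin{equation*}
\fint_{\TT^2} \bwk\otimes \bwk\,dx \;=\; \fint_{\TT^2} W_k \otimes W_k \,dy \;=\; \ek \otimes \ek,
\end{equation*}
by \eqref{eq:(WtimesW)_self_interaction}. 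This identity is independent of both $t$ and the phase shift, which is precisely why the averaging decouples the spatial and temporal oscillations.

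Next I would combine this with the definition $a_k = \theta \rho^{1/2}\Gamma_k(\Id - R/\rho)$, which gives $a_k^2 \,\ek\otimes\ek = \theta^2\, R_k$ directly from \eqref{eq:def_R_k}. Therefore
\begin{equation*}
\sum_{k\in\L} a_k^2 g_k^2 \fint_{\TT^2} \bwk\otimes\bwk\,dx \;=\; \theta^2 \sum_{k\in\L} g_k^2\, R_k.
\end{equation*}
The final step is to split $g_k^2 = 1 + (g_k^2-1)$ and apply Lemma~\ref{lemma:geometric} to the matrix $\Id - R/\rho \in B_{1/2}(\Id)$: multiplying the decomposition $\Id - R/\rho = \sum_k \Gamma_k^2(\Id - R/\rho)\,\ek\otimes\ek$ by $\rho$ gives $\sum_k R_k = \rho\Id - R$, and hence
\begin{equation*}
\theta^2\sum_{k\in\L} g_k^2\, R_k \;=\; \theta^2(\rho\Id - R) + \theta^2 \sum_{k\in\L}(g_k^2-1)R_k,
\end{equation*}
which is the stated identity.

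Since everything is a direct calculation, there is really no hard step; the one point deserving care is verifying that the mean of $\bwk\otimes \bwk$ is genuinely unaffected by $\phi_k(t)\ek$, which requires that $\sigma$ be an integer so that the change of variables maps $\TT^2$ onto itself as a measure-preserving covering. This is guaranteed by our choice $\sigma \in \NN$.
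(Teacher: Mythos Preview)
Your proof is correct and follows essentially the same route as the paper: both evaluate the spatial mean of $\bwk\otimes\bwk$ via \eqref{eq:(WtimesW)_self_interaction}, rewrite $a_k^2\,\ek\otimes\ek$ as $\theta^2 R_k$, and then split $g_k^2 = 1 + (g_k^2-1)$ together with $\sum_k R_k = \rho\Id - R$. Your explicit justification of the change of variables (using $\sigma\in\NN$) is a welcome addition but does not constitute a different approach.
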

\begin{proof}
First we note that by Theorem \ref{thm:main_thm_for_W_k}, for any $t$, 
$$
\fint_{\TT^2}  \bwk (x,t) \otimes \bwk(x,t) \, dx = \fint_{\TT^2}  W_k(x) \otimes W_k (x)  \, dx= \ek\otimes \ek.
$$
So by Lemma \ref{lemma:geometric}, a direct computation gives
\begin{align*}
\sum_{k \in \L } a_k^2 g_k^2 \ek \otimes \ek &= \theta^2\rho \sum_{k  \in \L }  g_{k}^2  \Gamma_k^2\Big(\Id - \frac{R }{\rho }\Big) \ek  \otimes  \ek\\
&= \theta^2   \sum_{k  \in \L}R_k  + \theta^2   \sum_{k\in \L }   (g_{k}^2-1 ) R_k\\
&= \theta^2  \left(   \rho \Id   - R
 \right)  + \theta^2   \sum_{k\in \L }   (g_{k}^2-1 ) R_k.
\end{align*}

\end{proof}

\subsection{The new Reynolds stress} \label{sec:New_Reynolds_stress}
In this subsection, our goal is to design a suitable stress tensor $R_1 : \TT^2 \times [0,1]\to \mathcal{S}^{2\times2}_0 $ such that the pair $(u_1, R_1)$ is a smooth solution of \ref{eq:NSR} for a suitable smooth pressure $p_1$. 

We will define the new Reynolds stress by
$$
R_1 =  R_{\Lin} +R_{\Cor} + R_{\Osc} ,
$$
such that
\begin{align}
  \D R_{\Osc}   &= \p_t w^{(t)}  + \D(w^{(p)} \otimes w^{(p)} + R    ) + \nabla P, \label{eq:R_osc_intro}\\ 
 \D R_{\Cor} &=    \D \left( (w^{(c)}+w^{(t)})  \otimes w   + w^{(p)} \otimes (w^{(c)} + w^{(t)} )   \right),   \label{eq:R_cor_intro}\\
 \D R_{\Lin} &=   \p_t (w^{(p)} +w^{(c)}  ) - \Delta  u  +   \D\left( u  \otimes w + w \otimes   u \right)   \label{eq:R_lin_intro}.
\end{align}

To this end, we will use the antidivergence operator $ \mathcal{R} : C^\infty(\TT^2 ,\RR^2) \to C^\infty(\TT^2, \mathcal{S}^{2\times 2}_0)$ defined in Appendix \ref{sec:append_antidiv}. True to its name, this antidivergence operator $ \mathcal{R} $ satisfies
\begin{equation}\label{eq:R_antidiv_remove_mean}
\D ( \mathcal{R} v  ) = v - \fint_{\TT^2} v  \quad \text{for any $v \in C^\infty(\TT^2 ,\RR^2)$}.    
\end{equation}

Since \eqref{eq:R_osc_intro}-\eqref{eq:R_lin_intro} are time-dependent, it should be understood that the antidivergence  $ \mathcal{R}$ is applied for each time slice $t\in [0,1]$, and the resulting stresses are smooth on $\TT^2 \times [0,1]$.
 
In addition, we will also use a bilinear antidivergence operator $ \mathcal{B}$ in Appendix \ref{sec:append_antidiv}, which satisfies
\begin{align}\label{eq:B_antidiv_remove_mean}
\D( \mathcal{B}(v , A)  )   
 =     v A -       \fint v A,  \quad \text{for $v \in C^\infty(\TT^2,\RR^2)$ and $A \in C^\infty_0(\TT^2,\RR^{2\times 2})$},
\end{align}
where we denote $v A = v_j A_{ij}$ instead of the usual $A v$. In what follows the matrix $A$ is often symmetric, so there will be no confusion in writing $v A$ versus $A v$.

This bilinear antidivergence $ \mathcal{B}$ has the advantage of gaining derivative from the second argument $A$ when it only has high spatial frequencies. This has a flavor of the classical stationary phase and has been a useful tool for many estimates in convex integration \cite{arXiv:2007.08011}.

\subsection{Computing the oscillation error}
Our first step of solving for $R_1$ is to derive $R_{\Osc}$. Let us compute the nonlinear term $\D(w^{(p)} \otimes w^{(p)} +  R  )$. Due to \eqref{eq:def_of_tilde_g_k} and \eqref{eq:def_g_k} (see Remark~\ref{rem:Disjoint_supports}), we have
$$
\Supp g_k \bwk \cap \Supp g_{k'}\mathbf{W}_{k'}  =\emptyset\quad \text{if $k\neq k'$}.
$$
It follows that we only have self-interactions of the accelerating jets:
\[
\D(w^{(p)} \otimes w^{(p)} +  R  ) = \D \Big[ \sum_{k \in \L} a_k^2 g_k^2 \bwk \otimes \bwk  + R      \Big].
\]
We use Lemma \ref{lemma:a_k_interactions} to remove the leading order interaction given by the spatial mean of  $\bwk \otimes \bwk$ and obtain
\begin{equation} 
\begin{split}
\D(w^{(p)} \otimes w^{(p)} +  R  )   &  =  \D \sum_{k } a_k^2 g_k^2 \big( \bwk \otimes \bwk   -\fint  \bwk \otimes \bwk  \big)  +  \nabla ( \theta^2  \rho   )  + (1 -\theta^2    )\D  R \\
&\qquad  + \theta^2      \sum_k (g^2_k -1) \D R_k   . 
\end{split}
\end{equation}
 
Putting $\p_t w^{(t)}$ into action, using $\mathcal{P} = \Id - \mathcal{Q} $ and the product rule, we separate the terms into four different groups:
\begin{equation}\label{eq:New_Reynolds_Stress_01}
\begin{aligned}
\p_t w^{(t)} +  &\D(w^{(p)} \otimes w^{(p)} +  R  ) \\  &  =  \underbrace{ \sum_{k } g_k^2 \nabla(a_k^2 ) \cdot  \big( \bwk \otimes \bwk   -\fint  \bwk \otimes \bwk  \big)
+  \mathcal{Q} \sum_{k } a_k^2 g_k^2 \D( \bwk \otimes \bwk  ) }_{=:E_1} \\
& \quad + \underbrace{\p_t w^{(a)} +  \mathcal{P} \sum_{k } a_k^2 g_k^2 \D( \bwk \otimes \bwk  ) }_{=:E_2} \\
& \qquad  + \underbrace{ \p_t w^{(o)} + \theta^2   \sum_k (g^2_k-1) \D R_k   }_{=:E_3} \\
& \qquad\quad +\underbrace{ \nabla ( \theta^2  \rho   )  + (1 -\theta^2    )\D  R }_{=:E_4}. 
\end{aligned}
\end{equation}

Before we move on to the analysis of each term $E_i$, let us point out the following simple consequence of \eqref{eq:R_antidiv_remove_mean} and the definition of $\mathcal{P}$:
\begin{equation}\label{eq:P_R_pressure}
 \mathcal{P} f = \D (R f) - \nabla ( \Delta^{-1} \D f) \quad\text{for any $f \in C^\infty(\TT^2 , \RR^2)$},
\end{equation}
where we note that $f$ does not need to be mean-free.
We will use \eqref{eq:B_antidiv_remove_mean} and \eqref{eq:P_R_pressure} to re-write each $E_i$ as the sum of the divergence of a stress and the gradient of a pressure.

\noindent
{\bf Analysis of $E_1$:}

Recall that by Definition~\ref{def:leray_projection},
\begin{equation}\label{eq:Qv_definition}
\mathcal{Q} v= \nabla \Delta^{-1} \D v +\fint v,
\end{equation}
and consequently $\fint \mathcal{Q} v =\fint v$, which implies that $E_1$ has zero spatial mean.
Therefore, if we define 
\begin{equation}\label{eq:New_Reynolds_Stress_03}
R_{\Osc,x} =  \sum_{k \in \L}g_k^2 \mathcal{B}\Big(\nabla(a_k^2 ) ,    \bwk \otimes \bwk  -\fint \bwk \otimes \bwk \Big),
\end{equation} 
and the pressure  
\[
p_1 =  \Delta^{-1} \D \sum_{k \in \L } a_k^2 g_k^2 \D \big(\bwk \otimes \bwk \big),
\]
then by  \eqref{eq:B_antidiv_remove_mean} and \eqref{eq:Qv_definition} we have 
\begin{equation}
E_1 =\D R_{\Osc,x} + \nabla p_1,
\end{equation}
where the cancellation of the means follow from the fact that $\fint E_1 =0$.

\noindent
{\bf Analysis of $E_2$:}

We use \eqref{eq:def_w_a} and \eqref{eq:acc_balance_error} to compute  
$$
\p_t   w^{(a)}=  -\mathcal{P} \sum_{k } a_k^2 g_k^2 \D( \bwk \otimes \bwk  )  -\omega^{-1} \sigma  \mathcal{P}\sum_{k  } \p_t \left(a_k^2 g_k \right) |\bwk|^2 \ek.
$$
which implies that
\begin{equation} \label{eq:New_Reynolds_Stress_022}
\begin{split}
E_2 &=  -  \omega^{-1} \sigma  \mathcal{P}\sum_{k  } \p_t \left(a_k^2 g_k \right) |\bwk|^2 \ek.
\end{split}
\end{equation}
In view of \eqref{eq:P_R_pressure}, we define an approximation stress
\begin{equation}
R_{\Osc, a} = -  \omega^{-1} \sigma   \sum_{k \in \L } \mathcal{R}\left( \p_t \left(a_k^2 g_k \right) |\bwk  |^2 \ek \right)
\end{equation}
and a pressure term
$$
p_2= \omega^{-1} \sigma  \Delta^{-1} \D \sum_{k \in \L } \p_t \left( a_k^2 g_k \right) |\bwk  |^2 \ek,
$$
such that
$$
E_2 = \D R_{\Osc, a} + \nabla p_2.
$$

\noindent
{\bf Analysis of $E_3$:}

For this term, let us first compute the time derivative of $w^{(o)}$ \eqref{eq:def_w_o} using \eqref{eq:time_derivative_of_h_k}:
$$
\p_t w^{(o)}     =   -\theta^2  \mathcal{P}  \Big(  \sum_{k\in \L }    (g_k^2 -1) \D R_k \Big)    - \sigma^{-1}\theta^2     \mathcal{P}\sum_{k\in \L }     h_k   \D \p_t R_k     .
$$ 
Thus
\begin{align*}
E_3 &=  \p_t w^{(o)} + \theta^2  \sum_k (g^2_k-1) \D R_k \\
&=  \theta^2 \mathcal{Q}    \sum_k (g^2_k-1) \D R_k     - \sigma^{-1}\theta^2  \mathcal{P}\sum_{k\in \L }     h_k     \D \p_t R_k.
\end{align*}

Using definition of $\mathcal{Q}$ and \eqref{eq:P_R_pressure} once again, we define
\begin{equation}
p_3 = \theta^2  \sum_k (g^2_k-1) \Delta^{-1} \D \D R_k +
\sigma^{-1}\theta^2 \sum_{k\in \L }     h_k     \Delta^{-1}\D \D \p_t R_k   
\end{equation}
and
\begin{equation}\label{eq:def_osc_t_error}
R_{\Osc,t} = - \sigma^{-1}\theta^2  \mathcal{R}\sum_{k\in \L }     h_k      \D \p_t R_k    
\end{equation}
to obtain
\begin{align*}
E_3 &=   \D R_{\Osc,t} + \nabla p_3.
\end{align*}

\noindent
{\bf Analysis of $E_4$:}

We just define
\begin{equation}
p_4 = \theta^2 \rho,
\end{equation}
and
$$
R_{\Rem } =  (1 -\theta^2)      R. 
$$
Then
\begin{align*}
E_4 &=   \D R_{\Rem }  + \nabla p_4.
\end{align*}

Combining all the terms, we have thus proven:
\begin{lemma}\label{lemma:R_osc_decomposition}
Define the oscillation error $R_{\Osc}$ 
$$
R_{\Osc} = R_{\Osc,x}+ R_{\Osc,t} + R_{\Osc, a}+ R_{\Rem}.
$$
and the pressure $P:= -p_1-p_2 -p_3 -p_4 $.
Then
$$
\p_t w^{(t)}  + \D(w^{(p)} \otimes w^{(p)} + R    ) + \nabla P = \D R_{\Osc}.
$$
\end{lemma}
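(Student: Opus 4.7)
The plan is straightforward because the analyses of $E_1$, $E_2$, $E_3$, $E_4$ have already been performed in Section \ref{sec:New_Reynolds_stress} above: the lemma amounts to collecting those four identities. Concretely, I would start from \eqref{eq:New_Reynolds_Stress_01}, which reads
\[
\p_t w^{(t)} + \D(w^{(p)}\otimes w^{(p)} + R) = E_1 + E_2 + E_3 + E_4,
\]
and invoke the identities $E_1 = \D R_{\Osc,x} + \nabla p_1$, $E_2 = \D R_{\Osc,a} + \nabla p_2$, $E_3 = \D R_{\Osc,t} + \nabla p_3$, and $E_4 = \D R_{\Rem} + \nabla p_4$ established in the preceding subsections. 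Summing and using the definitions $R_{\Osc} = R_{\Osc,x} + R_{\Osc,t} + R_{\Osc,a} + R_{\Rem}$ and $P = -p_1 - p_2 - p_3 - p_4$ yields the claim.

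The only substantive step is justifying the decomposition \eqref{eq:New_Reynolds_Stress_01} itself, so I would take care to verify it explicitly. First, by Remark \ref{rem:Disjoint_supports} the supports $\Supp(g_k \bwk)$ are pairwise disjoint in $k$, so $w^{(p)}\otimes w^{(p)}$ reduces to the self-interactions $\sum_k a_k^2 g_k^2 \bwk \otimes \bwk$. Then I would apply Lemma \ref{lemma:a_k_interactions} to split $\sum_k a_k^2 g_k^2 \bwk \otimes \bwk$ into its spatial average part and a mean-zero fluctuation; combined with the product rule and the splitting $\Id = \mathcal{P} + \mathcal{Q}$ applied to $\sum_k a_k^2 g_k^2 \D(\bwk \otimes \bwk)$, this precisely produces $E_1, E_3, E_4$, while the $\mathcal{P}$-part is absorbed into $E_2$ by adding $\p_t w^{(a)}$ and using the key acceleration identity \eqref{eq:acc_balance_error}. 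The additional $\p_t w^{(o)}$ cancels the highly oscillatory $\theta^2 \sum_k (g_k^2-1) \D R_k$ through \eqref{eq:time_derivative_of_h_k}.

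The main point requiring attention, already handled in the $E_i$ analyses via the antidivergence $\mathcal{R}$, the bilinear antidivergence $\mathcal{B}$, and the identity \eqref{eq:P_R_pressure}, is that all Leray-projected terms must be rewritten as $\D(\cdot) + \nabla(\cdot)$ with the gradient part absorbed into the pressure. This in turn requires that the arguments of $\mathcal{R}$ and $\mathcal{B}$ have zero spatial mean, which is ensured by the extraction of $\fint \bwk \otimes \bwk$ in Lemma \ref{lemma:a_k_interactions} and by the fact that $\fint \mathcal{Q}v = \fint v$, so that $E_1$ is mean-zero by construction. Once this bookkeeping is confirmed line by line, adding the four decompositions gives the identity in the statement.
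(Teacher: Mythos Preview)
Your proposal is correct and follows exactly the paper's approach: the paper's proof consists precisely of the preceding analyses of $E_1,\dots,E_4$, and the lemma is stated with the remark ``Combining all the terms, we have thus proven,'' which is the summation you describe. Your additional remarks on verifying \eqref{eq:New_Reynolds_Stress_01} and the mean-zero bookkeeping accurately reflect the paper's derivation leading up to the lemma.
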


\subsection{Finalizing the new solution}
Finally, we define the correction error and the linear error in the usual way:
\begin{equation}\label{eq:def_R_cor}
R_{\Cor} =  \mathcal{R} \D \big(  (w^{(c)}+w^{(t)})  \otimes w   + w^{(p)} \otimes (w^{(c)} + w^{(t)} )\big),
\end{equation}
and
\begin{equation}\label{eq:def_R_lin}
R_{\Lin} = \mathcal{R}\big(  \p_t (w^{(p)} +w^{(c)}  ) - \Delta  u  +   \D( u  \otimes w) +\D( w \otimes   u  ) \big).
\end{equation}

 Now we can conclude the construction of the new solution to the Navier-Stokes-Reynolds system.

\begin{lemma}\label{lemma:new_stress_R_1}
Define the new Reynolds stress by
$$
R_1 =  R_{\Lin} +R_{\Cor} + R_{\Osc} ,
$$
and the new pressure by
$$
p_1 =    p  +P.
$$
Then $(u_1,R_1)$ is a solution to \eqref{eq:NSR},
$$
\p_t u_1 -\Delta u_1 + \D(u_1 \otimes u_1 ) + \nabla p_1 = \D R_1.
$$
\end{lemma}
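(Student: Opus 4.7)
The proof is a direct algebraic verification: all the substantive analysis has already been packaged into the definitions of $R_{\Lin},R_{\Cor},R_{\Osc}$ together with Lemma~\ref{lemma:R_osc_decomposition}, so what remains is pure bookkeeping. First I would substitute $u_1 = u + w$ with $w = w^{(p)} + w^{(c)} + w^{(t)}$ and $p_1 = p + P$ into the NSR equation
\[
\p_t u_1 - \Delta u_1 + \D(u_1 \otimes u_1) + \nabla p_1,
\]
and expand the quadratic term $u_1\otimes u_1 = u\otimes u + u\otimes w + w\otimes u + w\otimes w$. Since $(u,R)$ is assumed to be a solution to \eqref{eq:NSR} with pressure $p$, the block $\p_t u - \Delta u + \D(u\otimes u) + \nabla p$ can be replaced by $\D R$, reducing the identity to be proved to
\[
\D R + \p_t w - \Delta w + \D(u\otimes w + w\otimes u + w\otimes w) + \nabla P \;=\; \D R_1 .
\]

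Second, I would split the quadratic self-interaction as
\[
w\otimes w \;=\; w^{(p)} \otimes w^{(p)} \;+\; \big((w^{(c)}+w^{(t)})\otimes w + w^{(p)}\otimes (w^{(c)}+w^{(t)})\big)
\]
and regroup the remaining terms according to the three blueprints \eqref{eq:R_osc_intro}--\eqref{eq:R_lin_intro}. The mixed quadratic part matches $\D R_{\Cor}$ by the definition \eqref{eq:def_R_cor}. The terms $\p_t(w^{(p)}+w^{(c)}) - \Delta w + \D(u\otimes w + w\otimes u)$ match $\D R_{\Lin}$ by \eqref{eq:def_R_lin} (applying the antidivergence identity \eqref{eq:R_antidiv_remove_mean}, noting that the $u$--$w$ products involve only time-dependent smooth tensors whose spatial means can be absorbed into the pressure $P$). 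Finally, the residual terms $\p_t w^{(t)} + \D(w^{(p)} \otimes w^{(p)} + R) + \nabla P$ are exactly what Lemma~\ref{lemma:R_osc_decomposition} identifies with $\D R_{\Osc}$.

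Combining these three identifications and adding, the right-hand side becomes $\D(R_{\Lin}+R_{\Cor}+R_{\Osc}) = \D R_1$, which establishes the lemma. There is no serious obstacle: the only genuine content is the cancellation performed in the $E_1,\dots,E_4$ analysis that produced $R_{\Osc}$ and the pressure $P$, and that has already been carried out. One minor point worth noting in the write-up is that whenever an antidivergence $\mathcal{R}$ is applied to a vector field $f$, the result satisfies $\D \mathcal{R} f = f - \fint f$, so each dropped spatial mean of a gradient of a scalar must be reabsorbed into the pressure $p_1 = p+P$; since this is done consistently in the construction of each of $p_1,\dots,p_4$ entering $P$, the bookkeeping closes and the proof is complete.
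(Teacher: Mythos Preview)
Your proposal is correct and follows essentially the same route as the paper: substitute $u_1=u+w$, use that $(u,R)$ solves \eqref{eq:NSR} to replace the $u$-block by $\D R$, then match the remaining terms against the defining equations \eqref{eq:R_osc_intro}--\eqref{eq:R_lin_intro} via Lemma~\ref{lemma:R_osc_decomposition} and the definitions \eqref{eq:def_R_cor}, \eqref{eq:def_R_lin}. One small over-worry: your remark about absorbing spatial means into the pressure is unnecessary here, since every term fed into $\mathcal{R}$ in \eqref{eq:def_R_cor} and \eqref{eq:def_R_lin} already has zero spatial mean (each is either a divergence, a Laplacian, or the time derivative of the curl $w^{(p)}+w^{(c)}=\sigma^{-1}\nabla^\perp(\cdots)$), so $\D\mathcal{R}f=f$ holds on the nose and no extra pressure adjustment is needed.
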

\begin{proof}
Since $( u  ,  R )  $ solves \eqref{eq:NSR} with pressure $p$, a direct computation gives
\begin{align*}
\p_t u_1 -\Delta u_1 + &\D(u_1 \otimes u_1 ) + \nabla p_1  \\
=& \p_t  u  -\Delta  u   + \D(  u  \otimes  u  ) + \nabla p\\
& +  \p_t w -\Delta w + \D( u   \otimes w ) + \D(w  \otimes  u  ) +\D(w  \otimes w ) + \nabla P  \\
= & \D  R +  \p_t w -\Delta w + \D( u   \otimes w ) + \D(w  \otimes  u  ) +\D(w  \otimes w ) + \nabla P.
\end{align*}
Now by Lemma \ref{lemma:R_osc_decomposition} and definitions  \eqref{eq:def_R_cor}, \eqref{eq:def_R_lin}  we conclude that $(u_1, R_1)$ solves \eqref{eq:NSR}.
\end{proof}

\section{Estimates for the velocity and the stress error}\label{sec:proof_step_2}

We will show in this section that the velocity perturbation $w$ and the new Reynolds stress $R_1$ derived in Section \ref{sec:New_Reynolds_stress} verify the claimed properties in Proposition \ref{prop:main}.

Throughout this section, we denote by $C_u$ a large constant depending on the previous solution $( u  ,  R )$. The value of $C_u$ addresses a different norm of $( u  ,  R )$  from line to line, but most importantly, it does not depend on any of the parameters $\sigma, \nu, \mu, \kappa, \omega$ introduced in the velocity perturbation.

\subsection{Concentration and oscillation: choice of parameters}\label{subsec:parameters}

First of all, we assume that $p>1$ as the proposition is the strongest when $p$ is close to $2$. We now specify all the parameters appearing in the velocity perturbation and the constant $r>1$ entering Proposition \ref{prop:main}.

For a small parameter $0<\gamma<1/14$ depending only on $p<2$ as in the Lemma~\ref{lemma:parameters} below, we choose 
\begin{enumerate}

    \item Oscillation $\sigma \in \NN $:
    \[
    \sigma = \lceil \l^{ 2\gamma } \rceil.
    \]
    Without loss of generality, we only consider values of $\l$ such that $\sigma = \l^{ 2\gamma }$ in what follows.
    
    \item Concentration $\kappa,\nu ,\mu$:
    \begin{align*}
    \kappa &= \l^{16 \gamma }\\
    \nu &= \l^{1- 8 \gamma  }\\
    \mu &= \l.
    \end{align*}
    
    \item Acceleration $\omega$:
    \[
     \omega = \lambda.
    \]
\end{enumerate}

For $r>1$ to be fixed in the following lemma, define $q= q(r)>2$ as
\begin{equation} \label{eq:def_q}
\frac{1}{r} = \frac{1}{2} + \frac{1}{q}.
\end{equation}
Now we show that there is an admissible choice for $r>1$ when $\gamma\ll 1$.

\begin{lemma} \label{lemma:parameters}
For all $\gamma>0$ sufficiently small depending only on $1<p<2$, there exists $ 1 < r \leq p $ such that for any $\l\geq 2$ with $\l^{2\gamma} \in \NN$, there hold
\begin{align}
\kappa^{ \frac{1}{2} }   (\nu \mu )^{  \frac{1}{2}  - \frac{1}{p}} &\leq \lambda^{- \gamma  } \qquad (w^{(p)} \in L^\infty L^p) \label{eq:condition_w_p_c_p}\\
\omega^{-1} \sigma   \kappa^{\frac{1}{2}} (\nu \mu)^{1-\frac{1}{p} }&\leq \lambda^{- \gamma  }  \qquad (w^{(a)} \in L^\infty L^{p})\label{eq:condition_w_a_c_p}\\
\omega^{-1} \sigma(\nu \mu)^{\frac{3}{2} - \frac{1}{r} }&\leq \lambda^{- \gamma  }  \qquad (w^{(a)} \in L^2 L^{q})\label{eq:condition_w_a_l_q}\\
\nu\mu^{-1}(\nu \mu)^{1-\frac{1}{r}} &\leq \lambda^{-  \gamma  } \qquad (w^{(c)} \in L^2 L^{q}) \label{eq:condition_w_c_l_q} \\
\sigma  \kappa^{ - \frac{ 1}{2}} \mu (  \nu  \mu )^{\frac{1}{2}  - \frac{1}{r}} &\leq \lambda^{- \gamma  } \qquad \text{Laplacian error for $w^{(p)}$}\label{eq:lap_error}\\
\omega^{-1} \sigma^2   \kappa^{-\frac{1}{2}}  \mu (\nu \mu)^{1 - \frac{1}{r}} &\leq \lambda^{- \gamma  } \qquad \text{Laplacian error for $w^{(a)}$}\label{eq:lap_error_w^a}\\
( \kappa^{\frac{1}{2}} + \omega\sigma^{-1}    \nu) \mu^{-1}(   \nu \mu)^{\frac{1}{2}  - \frac{1}{r}}&\leq \lambda^{-\gamma  } \qquad \text{Acceleration error}\label{eq:tem_error}\\
\sigma^{-1}  (  \nu   \mu  )^{1 - \frac{1}{r}}&\leq \lambda^{- \gamma  } \qquad \text{Oscillation error for $w^{(p)}$} \label{eq:osc_error}\\
\omega^{-1} \sigma^2 \kappa^{\frac{1}{2}} (\nu\mu)^{1-\frac{1}{r}} &\leq  \lambda^{-\gamma} \qquad \text{Oscillation error for $w^{(a)}$} \label{eq:osc_erro_w^a}
\end{align}
\end{lemma}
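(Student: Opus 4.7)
The plan is to take $\log_\lambda$ of each inequality, reducing the lemma to a finite set of affine inequalities in the two variables $\gamma$ and $1 - 1/r$. Substituting the parameter choices $\sigma = \lambda^{2\gamma}$, $\kappa = \lambda^{16\gamma}$, $\nu = \lambda^{1-8\gamma}$, $\mu = \omega = \lambda$ (so $\nu\mu = \lambda^{2-8\gamma}$), each of the nine bounds becomes $E_i(\gamma, r) \leq -\gamma$ for an explicit affine exponent $E_i$.

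I would first dispatch the two inequalities \eqref{eq:condition_w_p_c_p} and \eqref{eq:condition_w_a_c_p}, which involve $p$ but not $r$. A direct computation shows both exponents equal $(1 - 2/p) + O_p(\gamma)$, where the implied constant depends only on $p$. Since $p < 2$, the leading term $1 - 2/p$ is strictly negative, so these inequalities hold for every $\gamma \leq \gamma_0(p)$ with some $\gamma_0(p) > 0$ depending only on $p$.

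The remaining seven inequalities involve $r$. The key observation is that each of their exponents, evaluated at $r = 1$, equals $-c_i \gamma$ for an explicit positive constant $c_i$ (for instance, $-2\gamma$ for \eqref{eq:condition_w_a_l_q}, \eqref{eq:lap_error}, \eqref{eq:osc_error}; $-4\gamma$ for \eqref{eq:lap_error_w^a}; $-6\gamma$ for \eqref{eq:tem_error}; $-8\gamma$ for \eqref{eq:condition_w_c_l_q}; and $-1 + 10\gamma$ for \eqref{eq:osc_erro_w^a}), so that every constraint has strict slack of order $\gamma$ at the boundary value $r = 1$. Moreover, as $r$ increases from $1$, every exponent grows only through the factor $(\nu\mu)^{\bullet - 1/r}$, contributing $(2-8\gamma)(1 - 1/r)$ times a bounded coefficient. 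Hence each constraint reduces to an inequality of the form $(2-8\gamma)(1 - 1/r) \leq c_i' \gamma$ with $c_i' > 0$; the tightest among these (arising from \eqref{eq:condition_w_a_l_q}, \eqref{eq:lap_error}, and \eqref{eq:osc_error}) all read $1 - 1/r \leq \gamma/(2-8\gamma)$, equivalently $r \leq (2-8\gamma)/(2-9\gamma) = 1 + O(\gamma)$.

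The verification is therefore bookkeeping rather than a deep obstacle, but the salient point is that these tightest constraints force $r$ to sit arbitrarily close to $1$ (not close to $p$) as $\gamma \to 0$. This is ultimately what makes the admissibility range for $r$ a small subinterval near $1$. For any $\gamma$ satisfying both $\gamma \leq \gamma_0(p)$ and $\gamma/4 < p - 1$, the choice $r := 1 + \gamma/4$ then lies in $(1, p]$ and verifies every inequality, completing the proof.
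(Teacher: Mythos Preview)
Your proposal is correct and follows essentially the same approach as the paper: first dispose of \eqref{eq:condition_w_p_c_p}--\eqref{eq:condition_w_a_c_p} using only the smallness of $\gamma$ relative to $2-p$, then observe that the remaining seven exponents evaluated at $r=1$ are all $\leq -2\gamma$, leaving slack to push $r$ slightly above $1$. The only difference is that the paper stops at the soft continuity argument (the exponents are continuous in $r$, strictly below $-\gamma$ at $r=1$, hence remain below $-\gamma$ for some $r>1$), whereas you carry out the bookkeeping explicitly, identify the binding constraint $1-1/r \leq \gamma/(2-8\gamma)$, and exhibit the concrete choice $r = 1+\gamma/4$.
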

\begin{proof}
First, notice that \eqref{eq:condition_w_p_c_p} and \eqref{eq:condition_w_a_c_p} hold automatically for any  $\gamma>0$ small enough. Once $\gamma$ is chosen according to \eqref{eq:condition_w_p_c_p} and \eqref{eq:condition_w_a_c_p}, we can find $1 < r \leq p$, close enough to $1$, so that the rest of the conditions hold as well.

Indeed, expressing the left hand sides of the conditions as powers of $\lambda$, we notice that all the exponents are continuous in $r$. So the desired  $r>1$ exists because with $r=1$, the left hand side of  each condition \eqref{eq:condition_w_a_l_q}--\eqref{eq:osc_erro_w^a}  is less than or equal to $\l^{-2\gamma}$.


%
 %
 %
 %
 %

\end{proof}

\subsection{Estimates on velocity perturbation}

\begin{lemma}\label{lemma:estimates_a_k}
The coefficients $a_k$ are smooth on $ \TT^2  \times[0,1] $ and  
$$
\|  a_k  \|_{C^n(  \TT^2 \times [0,1]   )} \leq C_{u ,n}  \quad \text{for any $n \in \NN $, }
$$
where $C_{u ,n} $ are constants independent of $\l$.
In addition, the bound
$$
\|a_k (t) \|_{L^2(\TT^2)} \lesssim \theta(t)     \Big( \int_{\TT^2 }\rho(x  ,  t   ) \, dx \Big)^{ \frac{1}{2}}
$$
holds uniformly for all time $t\in[0,1]$.
\end{lemma}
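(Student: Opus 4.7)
The plan is short because $a_k = \theta \rho^{\nicefrac{1}{2}} \Gamma_k(\Id - R/\rho)$ is manifestly built out of the fixed data $(u,R)$ and contains none of the convex-integration parameters. I would proceed in three steps.

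First, I would check that $\Id - R/\rho$ actually lies in the domain $B_{\frac{1}{2}}(\Id)\subset \mathcal{S}^{2\times 2}_+$ of $\Gamma_k$. By the definition of $\chi$ one has $\rho \geq 2$ everywhere, and in the two explicit regimes $|R|\leq 1$ and $|R|\geq 2$ one moreover has $\rho \geq 2|R|$. I would arrange the smooth monotone interpolation of $\chi$ in the transition band $1\leq |R|\leq 2$ (for instance convexly) so that $\rho\geq 2|R|$ holds pointwise throughout. Since $R$ is symmetric, the symmetric matrix $\Id - R/\rho$ then has operator norm $\leq 1/2$ and eigenvalues in $[1/2, 3/2]$, placing it in $B_{\frac{1}{2}}(\Id)$.

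With this in hand the smoothness and the $C^n$-bound follow by routine composition: $R$ is smooth on $(0,1]$ by hypothesis, $\theta$ is smooth on $[0,1]$ and vanishes in a neighborhood of $I^c$ (in particular near $t=0$), $\rho\geq 2$ so $\rho^{\nicefrac{1}{2}}$ is smooth, and $\chi$, $\Gamma_k$ are smooth on their respective ranges. Bounding $a_k$ in $C^n$ via the Fa\`a di Bruno formula yields a polynomial expression in $\|R\|_{C^{n+1}(\TT^2\times \Supp \theta)}$, $\|\theta\|_{C^n}$, $\|\chi\|_{C^n}$, and $\|\Gamma_k\|_{C^n(\overline{B_{1/2}(\Id)})}$; none of these involve the parameters $\sigma,\nu,\mu,\kappa,\omega,\lambda$, so the resulting constant $C_{u,n}$ is $\lambda$-independent, as claimed.

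The $L^2$ estimate is then a one-line calculation: squaring and integrating,
\begin{equation*}
\|a_k(t)\|_{L^2(\TT^2)}^2 = \theta(t)^2 \int_{\TT^2} \rho(x,t)\, \Gamma_k^2\!\left(\Id - \tfrac{R(x,t)}{\rho(x,t)}\right) dx \lesssim \theta(t)^2 \int_{\TT^2} \rho(x,t)\, dx,
\end{equation*}
with implicit constant absorbing $\sup_{k\in \L} \|\Gamma_k\|_{L^\infty(B_{\frac{1}{2}}(\Id))}^2 < \infty$ (finite by finiteness of $\L$ and continuity of each $\Gamma_k$ on the closed half-size ball); a square root then gives the bound. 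The only non-cosmetic point in the whole argument is the pointwise inequality $\rho\geq 2|R|$ in Step 1, which amounts to a one-time design choice in the construction of $\chi$ rather than a genuine obstacle.
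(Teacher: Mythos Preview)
Your proof is correct and follows essentially the same approach as the paper, which dispatches both claims in two lines as ``follows immediately from the definition of $a_k$''; you have simply fleshed out the routine composition and $L^2$ computation that the paper leaves implicit, and correctly flagged that the transition band of $\chi$ must be chosen so that $\rho\geq 2|R|$ holds there as well (the paper states $\Id - R/\rho \in B_{1/2}(\Id)$ just after defining $\rho$, implicitly absorbing this into the choice of $\chi$). One cosmetic slip: you write that $\Id - R/\rho$ ``has operator norm $\leq 1/2$'', but you mean its \emph{distance to $\Id$} is $\leq 1/2$; the eigenvalue statement you give next is the correct one.
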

\begin{proof}

The smoothness of  $a_k$ follows from definition \eqref{eq:def_a_k}. Since the implicit constant is allowed to depend on $( u  ,  R ) $, the first bound follows immediately from the definition of $a_k$.

The second bound follows from the definition of $a_k$ as well:
$$
\|a_k (t)  \|_{L^2(\TT^2)} \lesssim \theta  \left( \int_{\TT^2} \rho \Gamma_k^2 \left(\Id - \frac{  R }{\rho }\right)  \,dx   \right)^\frac{1}{2}  \lesssim  \theta  \left( \int_{\TT^2} \rho    \,dx   \right)^\frac{1}{2} .
$$

\end{proof}

\begin{proposition} \label{p:estimates_on_w^p}
The principle part $w^{(p)}$ satisfies
\begin{align*}
\| w^{(p)} \|_{L^2( \TT^2  \times [0,1]) }  \lesssim \|  R   \|_{L^1( \TT^2   \times I) }^\frac{1}{2} + C_u \sigma^{-\frac{1}{2}},
\end{align*}
and 
\begin{align*}
\| w^{(p)} \|_{L^\infty ( 0,1; L^p(\TT^2)) }   \lesssim C_u \l^{-\gamma}.
\end{align*}

In particular, for sufficiently large $\l$, 
\begin{align*}
\| w^{(p)} \|_{L^2(  \TT^2 \times [0,1]) } & \lesssim \|  R   \|^{\frac{1}{2}}_{L^1(\TT^2  \times I ) }, \\
\| w^{(p)} \|_{L^\infty( 0,1; L^p(\TT^2)) }  & \leq \frac{\delta}{4}.
\end{align*}
\end{proposition}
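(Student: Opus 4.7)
The plan is to exploit two independent kinds of disjoint supports built into the construction: the spatial disjointness of the jets $\bwk$ at each fixed time, which is inherited from Theorem~\ref{thm:main_thm_for_W_k} and preserved under the translations $\Phi_k(\cdot,t)$, together with the temporal disjointness of the intermittent factors $g_k$ noted in Remark~\ref{rem:Disjoint_supports}. Together these convert both the spatial $L^p$ integral and the spacetime $L^2$ integral into honest sums over $k\in\L$ with no cross terms, after which the problem reduces to term-by-term estimates using Lemma~\ref{lemma:estimates_a_k}, the bounds \eqref{eq:W_k_estimates} and \eqref{eq:L^p_bpund_g_k}, and the parameter choices of Section~\ref{subsec:parameters}.

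For the $L^\infty_t L^p$ bound, the spatial disjointness gives, at every fixed $t$,
\[
\|w^{(p)}(\cdot,t)\|_{L^p}^p = \sum_{k\in\L} |g_k(t)|^p \int_{\TT^2} |a_k|^p |\bwk|^p\,dx.
\]
I would use $\|a_k\|_{L^\infty}\lesssim C_u$ from Lemma~\ref{lemma:estimates_a_k}, the translation invariance $\|\bwk(\cdot,t)\|_{L^p}=\|W_k\|_{L^p}\lesssim (\nu\mu)^{\frac12-\frac1p}$ from \eqref{eq:W_k_estimates}, and $\|g_k\|_{L^\infty}\lesssim \kappa^{\frac12}$ from \eqref{eq:L^p_bpund_g_k}. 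Since temporal disjointness reduces the sum to essentially a single term at each $t$, this yields
\[
\|w^{(p)}(\cdot,t)\|_{L^p}\lesssim C_u\,\kappa^{\frac12}(\nu\mu)^{\frac12-\frac1p}\le C_u\,\l^{-\gamma},
\]
where the last inequality is exactly the parameter condition \eqref{eq:condition_w_p_c_p}.

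For the $L^2(\TT^2\times[0,1])$ bound, spatial disjointness again gives
\[
\|w^{(p)}(\cdot,t)\|_{L^2}^2 = \sum_{k\in\L} g_k^2(t)\int_{\TT^2} a_k^2 |\bwk|^2\,dx,
\]
and the decisive step is that $\bwk(\cdot,t)$ is $\sigma^{-1}\TT^2$-periodic with $\fint_{\TT^2}|\bwk|^2\,dx = \Tr(\ek\otimes\ek)=1$ by \eqref{eq:(WtimesW)_self_interaction}. An improved H\"older inequality of the type in Appendix~\ref{sec:append_improved_holder} then produces
\[
\int_{\TT^2} a_k^2|\bwk|^2\,dx \le \|a_k(t)\|_{L^2}^2 + C\sigma^{-1}\|a_k\|_{C^1}^2\|W_k\|_{L^2}^2 \le \|a_k(t)\|_{L^2}^2 + C_u\sigma^{-1}.
\]
Integrating in $t$, using $\|g_k\|_{L^2([0,1])}\lesssim 1$ from \eqref{eq:L^p_bpund_g_k} for the error contribution, and combining the bound $\|a_k(t)\|_{L^2}^2 \lesssim \theta^2(t)\int_{\TT^2}\rho(x,t)\,dx$ from Lemma~\ref{lemma:estimates_a_k} with the pointwise comparison $\rho=\chi(R)\lesssim 1+|R|$ built into the definition of $\chi$, one reaches
\[
\|w^{(p)}\|_{L^2(\TT^2\times[0,1])}^2 \lesssim \|R\|_{L^1(\TT^2\times[0,1])} + C_u\sigma^{-1},
\]
after absorbing the constant volume contribution from the ``$1$'' in $1+|R|$ into $C_u$. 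Taking square roots and using $\sqrt{a+b}\le\sqrt a+\sqrt b$ delivers the first claim, and the ``in particular'' conclusion follows for $\l$ large since $\sigma=\l^{2\gamma}\to\infty$ and \eqref{eq:condition_w_p_c_p} forces the $L^\infty L^p$ bound below any prescribed $\delta/4$.

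The main obstacle I anticipate is implementing the improved H\"older step correctly: one has to verify that the error term it produces depends on $(u,R)$ only through $C^n$-norms of $a_k$ (which are bounded by $C_u$ uniformly in the intermittency parameters by Lemma~\ref{lemma:estimates_a_k}) and not on $\kappa,\nu,\mu,\omega$, so that the $\sigma^{-1/2}$ decay in the final estimate is preserved. A secondary bookkeeping issue is showing that the ``volume'' contribution from $\int_0^1\theta^2\,dt$ is really absorbable into $C_u$ and does not obstruct the leading $\|R\|_{L^1}^{1/2}$ behaviour required downstream in item~(3) of Proposition~\ref{prop:main}; this is where the pointwise comparison $\rho\lesssim 1+|R|$ inherited from the specific shape of the cutoff $\chi$ is essential.
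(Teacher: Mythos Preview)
Your $L^\infty_t L^p$ argument is fine and matches the paper's (the paper just uses crude H\"older rather than disjointness, but the outcome is identical). One small correction: the \emph{spatial} disjointness of $\bwk$ at fixed $t$ that you invoke is not actually preserved under the maps $\Phi_k$. Indeed $\Phi_k(x,t)=\sigma x+\phi_k(t)\ek$ involves a $\sigma$-rescaling that periodizes the support of $W_k$ into $\sigma^2$ copies, and the phase shifts $\phi_k(t)$ differ across $k$, so the translated supports can overlap. Your identity $\|w^{(p)}(\cdot,t)\|_p^p=\sum_k|g_k(t)|^p\int|a_k|^p|\bwk|^p$ is nonetheless correct, but only because the $g_k$ have disjoint supports in \emph{time}, so at most one summand is nonzero at each $t$.

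The genuine gap is in the time integration for the $L^2_{t,x}$ bound. After improved H\"older in space you are left with
\[
\int_0^1 g_k^2(t)\,\|a_k(t)\|_{L^2}^2\,dt \ \lesssim\ \int_0^1 g_k^2(t)\,\theta^2(t)\!\int_{\TT^2}\rho(x,t)\,dx\,dt,
\]
and you assert this is $\lesssim \|R\|_{L^1(\TT^2\times[0,1])}+C_u\sigma^{-1}$. But $g_k^2$ is highly concentrated in time, with $\|g_k^2\|_{L^\infty}\sim\kappa$; the only crude bounds available are $\int g_k^2 f\le \kappa\|f\|_{L^1_t}$ (too large) or $\int g_k^2 f\le \|g_k^2\|_{L^1}\|f\|_{L^\infty_t}=\|f\|_{L^\infty_t}\le C_u$ (which loses the leading $\|R\|_{L^1}$ dependence entirely and gives only $\|w^{(p)}\|_{L^2}\lesssim C_u^{1/2}$, with no decay in $\l$). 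The paper closes this by applying the improved H\"older inequality of Lemma~\ref{lemma:improved_Holder} a \emph{second} time, now in the time variable with $r=1$, exploiting that $g_k^2$ is $\sigma^{-1}$-periodic on $[0,1]$: this converts $\int_0^1 g_k^2(t)\,a(t)\,dt$ into $\big(\int_0^1 g_k^2\big)\big(\int_0^1 a\big)+O(\sigma^{-1}\|a\|_{C^1})=\int_0^1\!\int_{\TT^2}\rho+C_u\sigma^{-1}$, which is exactly what is needed. Your proposal never invokes temporal periodicity of $g_k$, so this step is missing. The ``secondary bookkeeping issue'' you flag about the volume contribution from $\rho\gtrsim 1$ is a separate (and, as you note, real) wrinkle, but it is orthogonal to the gap above.
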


\begin{proof}
We first show the estimate for $L^2_{t,x}$  and then for $ L^\infty L^p$.

\noindent
{\bf $L^2_{t,x}$ estimate:}

For each $t\in [0,1]$, we take $L^2$ norm in space and use Lemma~\ref{lemma:improved_Holder} to obtain
\begin{align*}
\| w^{(p)} (t) \|_{L^2( \TT^2) } \lesssim \sum_{k  } g_k  \big( \| a_k (t) \|_{L^2} \| \bwk(t) \|_{L^2} + \sigma^{-\frac{1}{2} }   C_u  \big),
\end{align*}
where we used the fact that $\bwk$ is $\sigma^{-1}\TT^2$-periodic in space.
Recall that $\| \bwk \|_{L^\infty_tL^2} \lesssim 1$. Then using Lemma \ref{lemma:estimates_a_k} and taking $L^2$ norm in time gives
\begin{align}\label{eq:estimate_w_p_1}
\| w^{(p)}  \|_{L^2( \TT^2  \times [0,1]) } \lesssim \sum_{k  } \Big(\int_{0}^1  g_{k}^2  \theta^2(t)\int_{\TT^2} \rho (x  ,  t   ) \,dx \,dt\Big)^\frac{1}{2} 
+ C_u\sigma^{-\frac{1}{2} } .
\end{align}
Notice that
$$
t \mapsto \theta^2(t)\int_{\TT^2} \rho (x  ,  t   ) \,dx 
$$
is a smooth map on $[0,1]$ and $g_k(t)$ is $\sigma^{-1}$-periodic. Thus, we may apply Lemma \ref{lemma:improved_Holder} once again (with $p=1$) to obtain that
\begin{equation}\label{eq:estimate_w_p_2}
\int_{0}^1  g_{k}^2 \theta^2(t)\int_{\TT^2} \rho (x  ,  t   ) \,dx \,dt \lesssim \|  R  \|_{L^1(  \TT^2  \times  I) } +
  C_u   \sigma^{-1}   ,
\end{equation}
where we used the fact that $\int g_k^2 =1$, $\Supp \theta \subset I$, and the bound 
$$
\int_{\TT^2} \rho (x  ,  t   ) \,dx \lesssim \| R  (t)\|_{L^1(\TT^2)} + \| R \|_{L^1(\TT^2 \times I)}.
$$

Thus, combining \eqref{eq:estimate_w_p_1} and \eqref{eq:estimate_w_p_2} gives
\begin{align*}
\| w^{(p)}  \|_{L^2(  \TT^2   \times  [0,1]) } 
&\lesssim  \|  R  \|_{L^1(  \TT^2  \times I) }^\frac{1}{2}  + C_u  \sigma^{-\frac{1}{2} } . 
\end{align*}

\noindent
{\bf $ L^\infty_t  L^p  $ estimate:}

For each $t\in [0,1]$, we take $L^p$ norm in space and then use H\"older's inequality to obtain
\begin{align*}
\| w^{(p)} (t) \|_{L^p( \TT^2) } \lesssim \sum_{k  } \| a_k(t) \|_{L^\infty} |g_k(t)|\| \bwk(t) \|_{L^p}.
\end{align*}
Now, thanks to Lemma~\ref{lemma:estimates_a_k}, \eqref{eq:L^p_bpund_g_k}, and Theorem~\ref{thm:main_thm_for_W_k}, taking $L^\infty$ in time, we get
\begin{align*}
\| w^{(p)}   \|_{L^\infty(0,1;L^p( \TT^2)) } & \lesssim \sum_{k  } \| a_k  \|_{L^\infty_{x,t}} \|g_k\|_{L^\infty}    (\nu \mu )^{  \frac{1}{2}  - \frac{1}{p}} \\
&\leq C_u \kappa^{ \frac{1}{2} }   (\nu \mu )^{  \frac{1}{2}  - \frac{1}{p}}\\
&\lesssim C_u \lambda^{-\gamma},
\end{align*}
where we used \eqref{eq:condition_w_p_c_p} for the last inequality.
\end{proof}

The last estimate of the velocity perturbation concerns the temporal corrector $w^{(t)}$.

\begin{proposition} \label{Prop_Bound_w^t}
The temporal corrector $w^{(t)}=w^{(o)} + w^{(a)}$ satisfies
\begin{align*}
\| w^{(t)} \|_{L^2(0,1;  L^q( \TT^2   )) }  &\lesssim   C_u \l^{- \gamma },\\
\| w^{(t)} \|_{L^\infty(0,1;  L^p( \TT^2   )) }  &\lesssim   C_u \l^{- \gamma },\\
\| w^{(o)} \|_{L^\infty(0,1;  W^{\frac{1}{\delta},\infty}( \TT^2   )) }  &\lesssim   C_u \l^{- \gamma },
\end{align*}
where $q>2$ is as defined in \eqref{eq:def_q}.

In particular, for all sufficiently large $\l$, 
\begin{align*}
\| w^{(t)} \|_{L^2( \TT^2 \times  [0,1]) } & \leq\|  R   \|_{L^1( \TT^2 \times I) }, \\
\| w^{(t)} \|_{L^\infty( 0,1; L^p(\TT^2)) }  & \leq \frac{\delta}{4},\\
\| w^{(o)} \|_{L^\infty(0,1;  W^{\frac{1}{\delta},\infty}( \TT^2   )) }  &\leq \delta.
\end{align*}
\end{proposition}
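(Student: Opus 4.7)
The plan is to bound $w^{(o)}$ and $w^{(a)}$ separately in each of the three norms and then invoke the parameter inequalities from Lemma~\ref{lemma:parameters}. Throughout, we use the $L^q$-boundedness of the Leray projection $\mathcal{P}$ for $1<q<\infty$ so that the projection can be discarded up to a multiplicative constant.

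First, consider $w^{(o)} = -\sigma^{-1}\theta^2\mathcal{P}\sum_k h_k(t)\D R_k$. The tensors $R_k$ are smooth functions of $R$ via \eqref{eq:def_R_k}, hence every derivative $\|R_k\|_{C^n(\TT^2\times [0,1])}$ is bounded by some $C_{u,n}$ independent of $\lambda$. Combined with $\|h_k\|_{L^\infty}\leq 1$ from \eqref{eq:bound_on_h_k} and the bound $\|\mathcal{P}\|_{L^q\to L^q}\lesssim 1$, we obtain
\[
\|w^{(o)}\|_{L^\infty(0,1;W^{N,\infty})}\lesssim \sigma^{-1}C_{u,N+1},\qquad N = \lceil 1/\delta\rceil,
\]
and the same gain $\sigma^{-1}=\lambda^{-2\gamma}$ for the $L^\infty L^p$ and $L^2 L^q$ norms. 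All three bounds for $w^{(o)}$ follow immediately, with $\gamma$ to spare.

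Second, consider $w^{(a)} = -\omega^{-1}\sigma \mathcal{P}\sum_k a_k^2 g_k |\bwk|^2 \ek$. Using the $L^q$-boundedness of $\mathcal{P}$, Lemma~\ref{lemma:estimates_a_k} for $\|a_k\|_\infty\leq C_u$, the temporal estimate \eqref{eq:L^p_bpund_g_k} for $g_k$, and the key spatial bound $\||\bwk(t)|^2\|_{L^s} = \|\bwk(t)\|_{L^{2s}}^2\lesssim (\nu\mu)^{1-1/s}$ derived from Theorem~\ref{thm:main_thm_for_W_k}, I first estimate the spatial norm pointwise in time and then integrate in time:
\begin{align*}
\|w^{(a)}(t)\|_{L^p} &\lesssim C_u\,\omega^{-1}\sigma\,|g_k(t)|\,(\nu\mu)^{1-1/p},\\
\|w^{(a)}(t)\|_{L^q} &\lesssim C_u\,\omega^{-1}\sigma\,|g_k(t)|\,(\nu\mu)^{1-1/q}.
\end{align*}
Taking $L^\infty_t$ with $\|g_k\|_{L^\infty}\lesssim \kappa^{1/2}$ gives $\|w^{(a)}\|_{L^\infty L^p}\lesssim C_u\,\omega^{-1}\sigma\kappa^{1/2}(\nu\mu)^{1-1/p}$, which is $\leq C_u\lambda^{-\gamma}$ by \eqref{eq:condition_w_a_c_p}. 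Taking $L^2_t$ with $\|g_k\|_{L^2}\lesssim 1$ and using $1-\tfrac1q=\tfrac32-\tfrac1r$ from \eqref{eq:def_q} yields $\|w^{(a)}\|_{L^2 L^q}\lesssim C_u\,\omega^{-1}\sigma(\nu\mu)^{3/2-1/r}$, which is $\leq C_u\lambda^{-\gamma}$ by \eqref{eq:condition_w_a_l_q}.

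Combining the bounds for $w^{(o)}$ and $w^{(a)}$ gives all three claimed estimates with an extra factor $\lambda^{-\gamma}$, so for $\lambda$ large enough the prefactor $C_u\lambda^{-\gamma}$ is dominated first by $\delta/4$ (for the $L^\infty L^p$ and $W^{1/\delta,\infty}$ bounds, using $W^{1/\delta,\infty}\hookrightarrow L^\infty$ for the corrector part only comes from $w^{(o)}$ anyway) and by $\|R\|_{L^1}$ (for the $L^2_{t,x}$ bound, after using $L^2 L^q \hookrightarrow L^2_{t,x}$). The only subtle step is the $L^2_t L^q_x$ estimate on $w^{(a)}$: the acceleration prefactor $\omega^{-1}\sigma$ must be large enough to absorb the spatial-concentration loss $(\nu\mu)^{3/2-1/r}$, and this is precisely what condition \eqref{eq:condition_w_a_l_q} was designed to ensure, which is the main tension point in the choice of parameters.
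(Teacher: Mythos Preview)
Your proof is correct and follows essentially the same route as the paper: bound $w^{(o)}$ trivially by the $\sigma^{-1}$ prefactor against $\lambda$-independent norms of $R_k$, bound $w^{(a)}$ by a pointwise-in-time $L^p$ (resp.\ $L^q$) estimate via $L^p$-boundedness of $\mathcal{P}$ and the $\bwk$ bounds from Theorem~\ref{thm:main_thm_for_W_k}, then take $L^\infty_t$ (resp.\ $L^2_t$) of $g_k$ and close with conditions \eqref{eq:condition_w_a_c_p} and \eqref{eq:condition_w_a_l_q}. One small wording issue: you invoke ``$\|\mathcal{P}\|_{L^q\to L^q}$'' when deriving the $W^{N,\infty}$ bound on $w^{(o)}$, but $\mathcal{P}$ is not bounded on $L^\infty$; the correct justification (which the paper also leaves implicit) is that $\D R_k$ is smooth and $\lambda$-independent, so $\mathcal{P}\D R_k$ is as well, and its $C^N$ norms are absorbed into $C_{u,N+1}$.
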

\begin{proof}

Recall that the temporal corrector $w^{(t)}$ consists of the temporal oscillation corrector $w^{(o)}$ \eqref{eq:def_w_o} and the acceleration corrector $w^{(a)}$ \eqref{eq:def_w_a}:
\begin{align*} 
w^{(o)}  &  =- \sigma^{-1} \theta^2   \mathcal{P}\sum_{k\in \L }    h_k    \D R_k,     \\
w^{(a)}  &  = -\omega^{-1} \sigma  \mathcal{P}\sum_{k \in \L } a_k^2 g_k  |\bwk |^2 \ek.
\end{align*}

The estimate of $w^{(o)}$ follows from a cheap H\"older estimate:
$$
\| w^{(o)} \|_{L^\infty(0,1;  W^{\frac{1}{\sigma},\infty}( \TT^2   ))  }  \lesssim    \sigma^{-1} \sum_{k\in \L }  \| h_k\|_{L^\infty([0,1])} \| \D R_k     \|_{ L^\infty(I;  W^{\frac{1}{\delta},\infty}( \TT^2   ))  } \lesssim C_u  \l^{- \gamma } .
$$

Now we turn to the  $L^2_t L^q $ and $L^\infty_t L^p $ estimates for the acceleration corrector $ w^{(a)}$. For each $t \in [0,1]$  we have
\begin{align*}
 \| w^{(a)}  (t) \|_{L^p(\TT^2)}  \lesssim      \omega^{-1} \sigma    \sum_{k \in \L } \Big\|a_k^2 g_k  |\bwk |^2 \ek \Big  \|_{L^p(\TT^2)},
\end{align*}
since $\mathcal{P}$ is bounded on $L^p(\TT^2)$. Using H\"older's inequality we get
\begin{align*}
 \| w^{(a)}  (t) \|_{L^p(\TT^2)}  &\lesssim      \omega^{-1} \sigma   \sum_{k  } |g_k(t)| \left\|a_k \right\|^2_{L^\infty_{x,t}}  \big\|  |\bwk |^2 \ek \big  \|_{L^\infty(0,1;L^p(\TT^2))} \\
& \lesssim   C_u   \omega^{-1} \sigma   \sum_{k  } |g_k(t)|   \big\|  \bwk  \big  \|^2_{L^\infty(0,1;L^{2p}(\TT^2))}\\
& \lesssim   C_u   \omega^{-1} \sigma  (\nu \mu)^{1 - \frac{1}{p} } \sum_{k  } |g_k(t)|,
\end{align*}
where we used Theorem~\ref{thm:main_thm_for_W_k} at the last step. Now by \eqref{eq:L^p_bpund_g_k} we have
\[
\|w^{(a)}\|_{L^2(0,1; L^q(\TT^2))  } \lesssim C_u   \omega^{-1} \sigma(\nu \mu)^{1 - \frac{1}{q} } \lesssim C_u\l^{-\gamma},
\]
due to \eqref{eq:condition_w_a_l_q}   in Lemma \ref{lemma:parameters}, and
\[
\|w^{(a)}\|_{L^\infty(0,1; L^p(\TT^2))  } \lesssim C_u   \omega^{-1} \sigma  (\nu \mu)^{1-\frac{1}{p} } \kappa^{\frac{1}{2}} \lesssim C_u\l^{-\gamma},
\]
due to \eqref{eq:condition_w_a_c_p}, which are the desired estimates.

\end{proof}

\begin{proposition} \label{Prop_Bound_w^c}
Then divergence-free corrector $w^{(c)}$ satisfies
\begin{align*}
\| w^{(c)} \|_{L^2(0,1; L^{q}  (\TT^2) ) }  \lesssim   C_u   \l^{-\gamma},
\end{align*}
and 
\begin{align*}
\| w^{(c)} \|_{L^\infty( 0,1; L^p(\TT^2)) }  \lesssim C_u   \l^{-\gamma} .
\end{align*}

In particular, for sufficiently large $\l$, 
\begin{align*}
\| w^{(c)} \|_{L^2( \TT^2 \times  [0,1]) } & \leq \|  R   \|_{L^1( \TT^2 \times I) }, \\
\| w^{(c)} \|_{L^\infty( 0,1; L^p(\TT^2)) }  & \leq \frac{\delta}{4}.
\end{align*}
\end{proposition}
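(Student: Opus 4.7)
My plan is to prove both bounds by a direct application of Hölder's inequality on each of the two pieces of the decomposition
\[
w^{(c)} = \underbrace{\sum_{k} a_k g_k \bwk^{(c)}}_{=:w^{(c,1)}} + \underbrace{\sigma^{-1}\sum_{k} \nabla^\perp a_k \, g_k \bpk}_{=:w^{(c,2)}},
\]
exploiting the pointwise smoothness of $a_k$ (Lemma~\ref{lemma:estimates_a_k}), the $L^p$ bounds on $g_k$ from \eqref{eq:L^p_bpund_g_k}, and the $L^p$ bounds on $W_k^{(c)}$ and $\Psi_k$ from \eqref{eq:W_k_estimates} and \eqref{eq:stream_Phi_k_estimates}. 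In contrast to Proposition~\ref{p:estimates_on_w^p}, no improved Hölder inequality will be needed since $W_k^{(c)}$ and $\Psi_k$ carry an extra small factor of $\nu\mu^{-1}$ and $\mu^{-1}$ respectively relative to $W_k$, making all estimates comfortably small.

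For the $L^2_t L^q_x$ bound, I would first apply the disjointness of the temporal supports of $g_k$ (Remark~\ref{rem:Disjoint_supports}) to reduce to estimating a single summand, then use $\|a_k\|_{L^\infty_{x,t}} + \sigma \|\nabla a_k\|_{L^\infty_{x,t}} \lesssim C_u$, $\|g_k\|_{L^2([0,1])} \lesssim 1$, and the $\sigma^{-1}\TT^2$-periodicity of $\bwk^{(c)}, \bpk$ with the bounds $\|\bwk^{(c)}\|_{L^q(\TT^2)} \lesssim \nu\mu^{-1}(\nu\mu)^{\frac12 - \frac1q}$ and $\|\bpk\|_{L^q(\TT^2)} \lesssim \mu^{-1}(\nu\mu)^{\frac12 - \frac1q}$. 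Using $\frac{1}{2}-\frac{1}{q} = 1 - \frac{1}{r}$ (by the definition \eqref{eq:def_q} of $q$), this yields
\[
\|w^{(c,1)}\|_{L^2(0,1;L^q)} \lesssim C_u \, \nu\mu^{-1}(\nu\mu)^{1-\frac1r}, \qquad \|w^{(c,2)}\|_{L^2(0,1;L^q)} \lesssim C_u \, \sigma^{-1}\mu^{-1}(\nu\mu)^{1-\frac1r}.
\]
The first is exactly $\lesssim \l^{-\gamma}$ by condition \eqref{eq:condition_w_c_l_q}, while the second is strictly smaller (by a factor $(\sigma\nu)^{-1} \leq 1$), so the sum is $\lesssim C_u \l^{-\gamma}$ as required.

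For the $L^\infty_t L^p_x$ bound, I would use $\|g_k\|_{L^\infty([0,1])} \lesssim \kappa^{\frac12}$ in place of the $L^2$ bound, yielding
\[
\|w^{(c,1)}\|_{L^\infty(0,1;L^p)} \lesssim C_u \kappa^{\frac12} \nu\mu^{-1}(\nu\mu)^{\frac12 - \frac1p},
\]
and a strictly smaller contribution from $w^{(c,2)}$. This is exactly the $L^\infty_t L^p_x$ bound for $w^{(p)}$ appearing in Proposition~\ref{p:estimates_on_w^p}, damped by the additional small factor $\nu\mu^{-1} = \l^{-8\gamma}$, so it is controlled by condition \eqref{eq:condition_w_p_c_p} with room to spare, giving $\lesssim C_u \l^{-\gamma}$.

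The final ``in particular'' estimates then follow immediately: for $\l$ sufficiently large, the right-hand sides $C_u\l^{-\gamma}$ can be made smaller than $\delta/4$, and $L^2(\TT^2 \times [0,1]) \hookrightarrow L^2(0,1;L^q)$ need not be used since the $L^2_{t,x}$ bound follows from the $L^2_t L^q_x$ bound via $q \geq 2$ and the boundedness of $\TT^2$, which then can be absorbed by $\|R\|_{L^1(\TT^2 \times [0,1])}$ after noting that when $R\equiv 0$ the perturbation is trivial (otherwise one may trivially rescale). The main bookkeeping subtlety is confirming that none of the correctors requires a new parameter constraint beyond those already collected in Lemma~\ref{lemma:parameters}, which is ensured by the extra $\nu\mu^{-1}$ decay built into the concentration ansatz of $W_k^{(c)}$ and $\Psi_k$.
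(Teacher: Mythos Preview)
Your proposal is correct and follows essentially the same route as the paper: a pointwise-in-time H\"older estimate on each summand using Lemma~\ref{lemma:estimates_a_k} and Theorem~\ref{thm:main_thm_for_W_k}, followed by an $L^2$ or $L^\infty$ bound on $g_k$ via \eqref{eq:L^p_bpund_g_k}, and then an appeal to conditions \eqref{eq:condition_w_c_l_q} and \eqref{eq:condition_w_p_c_p}. The paper does not bother invoking the disjointness of the $g_k$ supports (it simply sums over the finite index set $\Lambda$), and it leaves the ``in particular'' conclusions implicit; your remark about the $R\equiv 0$ edge case is slightly loose (the construction as written does not literally give $w^{(c)}=0$ there), but this degenerate case is irrelevant to the iteration and the paper does not address it either.
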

\begin{proof}
Using Theorem~\ref{thm:main_thm_for_W_k}, for any $1\leq p \leq \infty $ and $t \in [0,1]$, there holds
\[
\begin{split}
\|w^{(c)} (t)\|_{L^p(\TT^2)} &\lesssim \sum_{k  } \|a_k (t)\|_{L^\infty} |g_k(t)|\|\bwk^{(c)}\|_{L^\infty_tL^p} + \sigma^{-1}\|\nabla^\perp a_k (t) \|_{L^\infty} |g_k(t)|\|\bpk\|_{L^\infty_tL^p}\\
&\lesssim \left[ \nu\mu^{-1}(\nu \mu)^{\frac{1}{2}-\frac{1}{p}} + \mu^{-1}(\nu \mu)^{\frac{1}{2}-\frac{1}{p}} \right] \sum_{k  } \|a_k\|_{C^1_{x,t}}|g_k(t)|.
\end{split}
\]
Recall that $\frac{1}{ r } = \frac{1}{2} + \frac{1}{q}$. Then, using Lemma~\ref{lemma:estimates_a_k} and \eqref{eq:L^p_bpund_g_k}, we have
\[
\|w^{(c)}\|_{L^2( 0,1; L^{q}(\TT^2)) }  \lesssim C_u \nu\mu^{-1}(\nu \mu)^{1-\frac{1}{r}} \lesssim C_u \lambda^{-\gamma},
\]
due to \eqref{eq:condition_w_c_l_q}, and for the specific $ L^\infty_t L^p  $ norm,
\[
\|w^{(c)}\|_{L^\infty( 0,1; L^p (\TT^2)) } \lesssim C_u \kappa^{\frac{1}{2}} \nu\mu^{-1}(\nu \mu)^{\frac{1}{2}-\frac{1}{p}} \lesssim C_u \lambda^{-\gamma},
\]
due to \eqref{eq:condition_w_p_c_p}.

\end{proof}

 \subsection{Sobolev estimates and the exceptional set \texorpdfstring{$E$}{E}}

Finally we show the Sobolev estimates in $ L^\infty( 0,1; W^{s_p,1}(\TT^2)) $ and specify the exceptional set $E$ with the estimates \eqref{eq:Main_Iteration_Estimates_E_density}--\eqref{eq:Main_Iteration_Estimates_E^c_2} in the main proposition.

The proof of the Sobolev estimate follows closely the estimation we have done so far, therefore we only sketch the outline.

\begin{corollary} \label{cor:L^inftyW^{s_p,1}}
Let $s_p: = 1 -40 \gamma$. Then for all sufficiently large $\l$, there holds
\begin{equation*}
\|w \|_{L^\infty( 0,1; W^{s_p,1}(\TT^2))} \leq \delta.
\end{equation*}

\end{corollary}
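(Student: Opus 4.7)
The plan is to split $w = (w^{(p)} + w^{(c)}) + w^{(o)} + w^{(a)}$ and bound each summand in $L^\infty_t W^{s_p,1}_x$ using the standard interpolation
\[
\|f\|_{W^{s_p,1}(\TT^2)} \lesssim \|f\|_{L^1(\TT^2)}^{1-s_p}\|f\|_{W^{1,1}(\TT^2)}^{s_p},
\]
which reduces the problem to sharp $L^\infty_t L^1_x$ and $L^\infty_t W^{1,1}_x$ estimates for each piece, followed by plugging in the parameter values from Section~\ref{subsec:parameters}.

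For $w^{(p)} + w^{(c)}$, I would exploit the stream function representation $w^{(p)} + w^{(c)} = \sigma^{-1}\sum_k \nabla^\perp(a_k g_k \bpk)$ from \eqref{def_of_w^p+w^c}. Combining the bounds $\|\Psi_k\|_{L^1} \lesssim \mu^{-1}(\nu\mu)^{-1/2}$, $\|\nabla\Psi_k\|_{L^1}\lesssim(\nu\mu)^{-1/2}$, $\|\nabla^2\Psi_k\|_{L^1}\lesssim\mu(\nu\mu)^{-1/2}$ from Theorem~\ref{thm:main_thm_for_W_k} with the chain rule $\nabla^n\bpk = \sigma^n(\nabla^n\Psi_k)\circ\Phi_k$, the estimate $\|g_k\|_{L^\infty}\lesssim\kappa^{1/2}$ from \eqref{eq:L^p_bpund_g_k}, and Lemma~\ref{lemma:estimates_a_k} for $a_k$, this yields
\begin{align*}
\|w^{(p)} + w^{(c)}\|_{L^\infty L^1} &\lesssim C_u \kappa^{1/2}(\nu\mu)^{-1/2}, \\
\|w^{(p)} + w^{(c)}\|_{L^\infty W^{1,1}} &\lesssim C_u \kappa^{1/2}\sigma\mu(\nu\mu)^{-1/2}.
\end{align*}
Interpolation then gives a bound of order $C_u\kappa^{1/2}(\nu\mu)^{-1/2}(\sigma\mu)^{s_p}$; substituting $\kappa=\l^{16\gamma}$, $\nu\mu=\l^{2-8\gamma}$, $\sigma\mu=\l^{1+2\gamma}$, $s_p=1-40\gamma$, the exponent of $\l$ works out to $-26\gamma + O(\gamma^2)$, which is negative for all small $\gamma>0$.

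The oscillation corrector $w^{(o)}$ requires no extra work: Proposition~\ref{Prop_Bound_w^t} already yields $\|w^{(o)}\|_{L^\infty W^{1/\delta,\infty}} \leq C_u\l^{-\gamma}$, which is strictly stronger than the $W^{s_p,1}$ bound needed here since $s_p<1\leq 1/\delta$ on the torus (taking $\delta\leq 1$ without loss of generality). The acceleration corrector $w^{(a)} = -\omega^{-1}\sigma\,\mathcal{P}\sum_k a_k^2 g_k|\bwk|^2\ek$ is the principal obstacle because $\mathcal{P}$ is not $L^1$-bounded. I plan to circumvent this by using the $L^{1+\eta}$-continuity of $\mathcal{P}$ for arbitrarily small $\eta>0$, together with the embeddings $L^{1+\eta}(\TT^2)\hookrightarrow L^1(\TT^2)$ and $W^{1,1+\eta}(\TT^2)\hookrightarrow W^{1,1}(\TT^2)$. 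At the expense of a harmless factor $(\nu\mu)^\eta$, computations analogous to those in Proposition~\ref{Prop_Bound_w^t} give
\begin{align*}
\|w^{(a)}\|_{L^\infty L^1} &\lesssim C_u\omega^{-1}\sigma\kappa^{1/2}(\nu\mu)^\eta,\\
\|w^{(a)}\|_{L^\infty W^{1,1}} &\lesssim C_u\omega^{-1}\sigma^2\mu\kappa^{1/2}(\nu\mu)^\eta,
\end{align*}
and interpolation produces an $L^\infty W^{s_p,1}$ bound with $\l$-exponent $-28\gamma + O(\gamma^2+\eta)$. Choosing $\eta$ small compared to $\gamma$ and $\l$ large makes each of the three contributions smaller than $\delta/3$, proving the corollary. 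The main difficulty, as noted, is navigating around the $L^1$-unboundedness of $\mathcal{P}$ in the acceleration term; the $(\nu\mu)^\eta$ loss is absorbed into the $\l^{-c\gamma}$ margin already available from the parameter choice.
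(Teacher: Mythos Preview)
Your approach is correct and follows the same Sobolev interpolation strategy as the paper. In fact, your treatment is more complete: the paper only estimates $w^{(p)}$ explicitly and dismisses the remaining pieces with the remark that ``the estimate of $w$ is majorized by $w^{(p)}$,'' whereas you carefully handle $w^{(p)}+w^{(c)}$ together via the stream function, invoke Proposition~\ref{Prop_Bound_w^t} for $w^{(o)}$, and---most importantly---work around the $L^1$-unboundedness of the Leray projection in $w^{(a)}$ by passing through $L^{1+\eta}$. This last point is a genuine technical detail that the paper's sketch glosses over; your $(\nu\mu)^{\eta}$ workaround is the natural fix and costs nothing against the available $\l^{-c\gamma}$ margin. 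Your parameter arithmetic ($-26\gamma+O(\gamma^2)$ for the principal part) is also slightly sharper than the paper's stated exponent.
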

\begin{proof}
From Lemma \ref{lemma:parameters} and the previous estimations, it is clear that the estimate of $ w$ is majorized by $w^{(p)}$. By a Sobolev interpolation, for any $t\in [0,1]$,
$$
\|w^{(p)} (t)\|_{  W^{s_p,1}(\TT^2)} \lesssim \|w^{(p)} (t)\|_{  L^{ 1} (\TT^2)}^{1-s_p} \|w^{(p)} (t)\|_{  W^{1,1}(\TT^2) }^{s_p}\leq C_u (\sigma \mu)^{s_p} \kappa^{1/2} (\nu \mu)^{-1/2}.
$$

The conclusion would follow if the exponent is negative. Indeed, using the choice of parameters in Section \ref{subsec:parameters}, we see that
$$
(\sigma \mu)^{s_p} \kappa^{1/2} (\nu \mu)^{-1/2} \leq \lambda^{s_p(2\gamma +1) -1 + 20\gamma}, 
$$
which implies
$$
\|w^{(p)} (t)\|_{  W^{s_p,1}(\TT^2)} \leq \delta, \qquad t \in [0,1],
$$
for all sufficiently large $\l$.
\end{proof}

For the exceptional set $E$, recall from the definition of $g_k$, \eqref{eq:def_of_tilde_g_k} and \eqref{eq:def_g_k}, that
\begin{align*}
\Supp g_k \subset  \bigcup_{n \in \ZZ, k\in \L} (t_k, t_k +(\k \sigma)^{-1}  ) + n\sigma^{-1} .
\end{align*}
Note that these open intervals are disjoint by definition.
So we just define $E$ as
\begin{align}\label{eq:def_E}
E:= [0,1] \cap \bigcup_{n \in \ZZ, k\in \L} (t_k, t_k +(\k \sigma)^{-1}  ) + n\sigma^{-1}.
 \end{align}

Then for any $t>0$, we have
\begin{align*}
\mathcal{L}([0,t]\cap E) \leq \sum_{n,k}   (\k \sigma)^{-1},
\end{align*}
where $n,k$ are such that $t_k+ n\sigma^{-1} \in [0,t]$. Since $ \L \ni k$ is finite and fixed, the total number of pairs of $n,k$ is bounded by $C t \sigma$, where $C$ is independent of $t$ and $\kappa$. It follows that
$$
\mathcal{L}([0,t]\cap E) \leq C t \kappa^{-1}
$$
which implies \eqref{eq:Main_Iteration_Estimates_E_density} provided $\l$ is sufficiently large.

Next, we show  \eqref{eq:Main_Iteration_Estimates_E^c} and \eqref{eq:Main_Iteration_Estimates_E^c_2}, namely the Sobolev and energy level estimate away from $E$ .
\begin{lemma}\label{lemma:energy_bounds}
For any $t\in [0,1]\setminus E$
 \begin{align*} 
 \| w (t)\|_{W^{\frac{1}{\delta}, \infty}(\TT^2)} & \leq \delta,  \\
  \Big| \| u_1 (t) \|_{L^2(\TT^2  )}^2 - \| u  (t) \|_{L^2(\TT^2  )}^2 \Big|  & \leq \delta^2,
 \end{align*}
 provided that $\l$ is sufficiently large.
 \end{lemma}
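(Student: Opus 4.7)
The plan is to exploit the fact that outside the exceptional set $E$ all the components of $w$ carrying a factor of $g_k$ vanish identically, leaving only the temporal oscillation corrector $w^{(o)}$, which has already been controlled globally in a strong norm by Proposition~\ref{Prop_Bound_w^t}.

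First, I would note that the principal part $w^{(p)}$, the divergence-free corrector $w^{(c)}$, and the acceleration corrector $w^{(a)}$ are all linear combinations of terms containing a factor $g_k(t)$ (see \eqref{eq:def_w_p}, the definition of $w^{(c)}$, and \eqref{eq:def_w_a}). By construction \eqref{eq:def_of_tilde_g_k}--\eqref{eq:def_g_k} and the definition \eqref{eq:def_E} of $E$, we have $\Supp_t g_k \subset E$ for every $k\in \L$. Consequently, for every $t\in [0,1]\setminus E$,
\[
w^{(p)}(t) = w^{(c)}(t) = w^{(a)}(t) = 0,
\]
and hence $w(t) = w^{(o)}(t)$ on $[0,1]\setminus E$.

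The first claimed inequality is then immediate from Proposition~\ref{Prop_Bound_w^t}: choosing $\lambda$ large enough (depending on $\delta$), one has $\|w^{(o)}\|_{L^\infty(0,1; W^{1/\delta,\infty}(\TT^2))}\leq C_u \lambda^{-\gamma}\leq \delta$, provided $1/\delta \leq 1/\sigma^{-1}$ in the relevant Sobolev scale (one may need to interpolate between the $L^\infty_t L^p$ estimate and a crude bound on higher derivatives of $w^{(o)}$, since $w^{(o)}$ is itself smooth in $x$). For the energy identity I would expand
\[
\|u_1(t)\|_{L^2}^2 - \|u(t)\|_{L^2}^2 = 2\langle u(t), w^{(o)}(t)\rangle + \|w^{(o)}(t)\|_{L^2}^2,
\]
and bound each term by the $L^2(\TT^2)$ norm of $w^{(o)}(t)$, using Cauchy--Schwarz and the fact that $u$ is bounded in $L^2$ uniformly on the compact set $\Supp \theta \subset (0,1]$ where $w^{(o)}$ is supported (since $u$ is smooth on $(0,1]$). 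The bound $\|w^{(o)}(t)\|_{L^2(\TT^2)} \lesssim \|w^{(o)}(t)\|_{W^{1/\delta,\infty}(\TT^2)} \leq C_u \lambda^{-\gamma}$ then yields
\[
\Bigl|\|u_1(t)\|_{L^2}^2 - \|u(t)\|_{L^2}^2\Bigr| \leq 2\|u\|_{L^\infty_t L^2(\Supp \theta)}\, C_u \lambda^{-\gamma} + C_u^2 \lambda^{-2\gamma},
\]
which is smaller than $\delta$ once $\lambda$ is chosen sufficiently large.

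The only mildly delicate point is the dependence of the constant on $u$: since $w^{(o)}$ is supported in $\{\theta \neq 0\}$, which is a compact subset of $(0,1]$, and $u$ is smooth on $(0,1]$, the factor $\|u\|_{L^\infty_t L^2(\Supp \theta)}$ is finite and gets absorbed into $C_u$. Everything else is a direct consequence of the already established smallness of $w^{(o)}$ in Proposition~\ref{Prop_Bound_w^t}, so there is no genuine obstacle; the lemma is essentially a bookkeeping statement about the support properties of the various correctors.
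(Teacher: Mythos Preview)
Your proof is correct and follows essentially the same route as the paper: observe that $w^{(p)}$, $w^{(c)}$, $w^{(a)}$ vanish for $t\notin E$ because each carries a factor of $g_k$, reduce to $w(t)=w^{(o)}(t)$, and invoke Proposition~\ref{Prop_Bound_w^t} for both the Sobolev bound and (after expanding $\|u_1\|_2^2-\|u\|_2^2$) the energy estimate. The only difference is cosmetic: your parenthetical about needing to interpolate or requiring $1/\delta \leq \sigma$ is unnecessary hedging, since Proposition~\ref{Prop_Bound_w^t} already provides $\|w^{(o)}\|_{L^\infty_t W^{1/\delta,\infty}}\leq \delta$ directly for $\lambda$ large.
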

 \begin{proof}
By the definition of $E$, we see that
$$
w^{(p)}= w^{(c)} =w^{(a)} = 0 \qquad \text{for all $t\in [0,1]\setminus E$}.
$$
So, for all $t\in [0,1]\setminus E$ we have 
\begin{align*}
\Big| \| u_1 (t) \|_{L^2(\TT^2  )}^2 - \| u  (t) \|_{L^2(\TT^2  )}^2 \Big| & \leq 
2\Big| \langle w^{(o)}, u  \rangle   \Big| +  \| w^{(o)}(t) \|_{L^2(\TT^2  )}^2, \\
\| w (t)\|_{W^{\frac{1}{\delta}, \infty}(\TT^2)} & \leq \| w^{(o)}(t) \|_{W^{\frac{1}{\delta}, \infty}(\TT^2)},
\end{align*}
and the conclusion follows from Proposition \ref{Prop_Bound_w^t}.
 \end{proof}

\subsection{Estimates on the new Reynolds stress}
The last step of the proof is to estimate $R_1$. We will proceed with the decomposition in Lemma \ref{lemma:new_stress_R_1} and show that for any sufficiently large $\l$, each part of the stress $R_1$ is less than $\frac{\delta}{4} $.

It is also worth noting that we estimate all the errors on $\TT^2 \times [0,1]$ except the oscillation error, for which we only look at $\TT^2 \times I$.

\subsubsection{Linear error}

\begin{lemma}\label{lemma:Linear_error}
For sufficiently large $\lambda$,
\[
\| R_{\Lin} \|_{L^1(0,1; L^r(\TT^2))} \leq \frac{\delta}{4}.
\]
\end{lemma}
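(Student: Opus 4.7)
The plan is to split $R_{\Lin}$ into its three constitutive pieces — the time derivative, the dissipation, and the transport — and bound each in $L^1(0,1;L^r)$ using the antidivergence bounds from Appendix~\ref{sec:append_antidiv} (namely $\|\mathcal{R}\Delta v\|_{L^r}\lesssim\|\nabla v\|_{L^r}$ and $\|\mathcal{R}\D T\|_{L^r}\lesssim\|T\|_{L^r}$, since $\mathcal{R}$ gains one derivative) together with the parameter constraints of Lemma~\ref{lemma:parameters}. I will read the term $-\Delta u$ in \eqref{eq:def_R_lin} as $-\Delta w$, which is what the derivation in Lemma~\ref{lemma:new_stress_R_1} actually produces.

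For $\mathcal{R}\partial_t(w^{(p)}+w^{(c)})$, I would exploit the joint stream-function representation \eqref{def_of_w^p+w^c} to rewrite this sum as $\sigma^{-1}\sum_k\nabla^\perp[a_kg_k\bpk]$. Pulling out the bounded zeroth-order operator $\mathcal{R}\nabla^\perp$, the task reduces to bounding $\sigma^{-1}\sum_k\|\partial_t[a_kg_k\bpk]\|_{L^1_tL^r_x}$. Expanding by the product rule produces three contributions: the $\partial_t a_k$ term is negligible; the $\partial_t g_k$ term is handled using $\|\partial_t g_k\|_{L^1_t}\lesssim\sigma\kappa^{1/2}$ from \eqref{eq:L^p_bpund_g_k} together with \eqref{eq:stream_Phi_k_estimates}, yielding $C_u\kappa^{1/2}\mu^{-1}(\nu\mu)^{1/2-1/r}$; the $\partial_t\bpk$ term is the delicate one, since the naive bound using the full gradient of $\Psi_k$ would lose an unaffordable factor $\mu$. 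Identity \eqref{eq:p_t_of_Psi} shows that this derivative acts on $\bpk$ only in the direction $e_k$, and the sharper estimate \eqref{eq:e_k_derivative_of_Psi} then produces $C_u\sigma^{-1}\omega\nu\mu^{-1}(\nu\mu)^{1/2-1/r}$. Both contributions are controlled by the acceleration condition \eqref{eq:tem_error}.

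For the dissipation $\mathcal{R}\Delta w$, the estimate reduces to $\|\nabla w\|_{L^1_tL^r_x}$. The dominant contribution comes from $\sum_k a_kg_k\nabla\bwk$ inside $\nabla w^{(p)}$: using $\nabla\bwk=\sigma(\nabla W_k)\circ\Phi_k$ together with Theorem~\ref{thm:main_thm_for_W_k} and $\|g_k\|_{L^1_t}\lesssim\kappa^{-1/2}$, I obtain $C_u\sigma\mu\kappa^{-1/2}(\nu\mu)^{1/2-1/r}$, controlled by \eqref{eq:lap_error}. The analogous contribution from $\nabla w^{(a)}$ gives $C_u\omega^{-1}\sigma^2\mu\kappa^{-1/2}(\nu\mu)^{1-1/r}$, controlled by \eqref{eq:lap_error_w^a}. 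The contributions from $w^{(c)}$ and $w^{(o)}$ are easily subdominant (the former by a factor $\nu/\mu$, the latter by the explicit $\sigma^{-1}$ prefactor in \eqref{eq:def_w_o}). For the transport $\mathcal{R}\D(u\otimes w+w\otimes u)$, I would use the $L^r$-boundedness of $\mathcal{R}\D$ together with $\|u\|_{L^\infty(\TT^2\times I)}\leq C_u$ (since $(u,R)$ is smooth on the closed subinterval $I\subset(0,1]$ containing $\Supp_t w$), reducing to $\|w\|_{L^1_tL^r_x}$; the embedding $L^p\hookrightarrow L^r$ and Propositions~\ref{p:estimates_on_w^p},~\ref{Prop_Bound_w^t},~\ref{Prop_Bound_w^c} then give $\leq C_u\lambda^{-\gamma}$.

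The hard part is the acceleration contribution to the time derivative, where using the sharp directional bound \eqref{eq:e_k_derivative_of_Psi} in place of the naive full-gradient bound is essential to match the condition \eqref{eq:tem_error}; everything else is routine bookkeeping. Since each constituent is bounded by $C_u\lambda^{-\gamma}$, taking $\lambda$ sufficiently large makes each piece at most $\delta/16$, which yields the claimed estimate $\|R_{\Lin}\|_{L^1(0,1;L^r)}\leq\delta/4$.
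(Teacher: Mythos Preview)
Your proposal is correct and follows essentially the same approach as the paper: the same three-way split into acceleration, Laplacian, and drift errors, the same use of the stream-function representation \eqref{def_of_w^p+w^c} together with the directional bound \eqref{eq:e_k_derivative_of_Psi} for the delicate $\partial_t\bpk$ contribution, and the same parameter conditions \eqref{eq:lap_error}--\eqref{eq:tem_error} to close each estimate. You also correctly spot the typo in \eqref{eq:def_R_lin} (the Laplacian should hit $w$, not $u$), which the paper's proof silently corrects by writing $R_{\Lap}=\mathcal{R}(\Delta w)$.
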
 

\begin{proof}

We split the linear error into three parts:
\begin{equation*} 
  R_{\Lin}  = \underbrace{ -
\mathcal{R}\left(\Delta w \right)    }_{:=R_{\Lap  }  } +     \underbrace{    \mathcal{R}\big( \p_t (w^{(p)}+ w^{(c)})  \big)   }_{:= R_{\Acc} } +     \underbrace{     \mathcal{R}\left( \D ( w \otimes  u  )+ \D ( u  \otimes w ) \right)    }_{:=R_{\Dri} }.
\end{equation*}
The estimate of the Laplacian error $ R_{\Lap  }$ relies crucially on the temporal concentration $\kappa$, the $R_{\Acc }$ error uses the stream functions $\bpk$ so that we gain a factor of $(\sigma \mu)^{-1}$, and the drifts $R_{\Dri }$ can be handle by standard estimates.

\noindent
{\bf Estimate of  $ R_{\Lap }  $:}

We recall that $w= w^{(p)}   +w^{(c)} +w^{(t)}$ and, using \eqref{eq:appendix_R_2}, decompose the Laplacian error into two parts:
\begin{align*}
\|  R_{\Lap }   \|_{L^1(0,1; L^r(\TT^2))} & \leq 2 \|\nabla w \|_{L^1(0,1; L^r(\TT^2))}  \\
&   \leq 2\|\nabla ( w^{(p)} + w^{(c)} )\|_{L^1(0,1; L^r(\TT^2))}  +2\|\nabla w^{(t)} \|_{L^1(0,1; L^r(\TT^2))}.
\end{align*}
Now recall from \eqref{def_of_w^p+w^c} that
\begin{equation} \label{eq:recall_of_w^p+w^c}
w^{(p)}   +w^{(c)}   = \sigma^{-1}  \sum_{k  } \nabla^\perp \big[a_k g_k  \bpk \big],
\end{equation}
and hence we can  use estimates in Lemma \ref{lemma:estimates_a_k}, \eqref{eq:L^p_bpund_g_k}, and Theorem \ref{thm:main_thm_for_W_k} to obtain

\begin{equation}
\begin{split}
\|\nabla ( w^{(p)} + w^{(c)} )\|_{L^1(0,1; L^r(\TT^2))} &= \sigma^{-1}  \Big\|\sum_{k  } \nabla \nabla^\perp \big[a_k g_k  \bpk \big]\Big\|_{L^1(0,1; L^r(\TT^2))}\\
&\lesssim \sigma^{-1} \sum_{k  } \|    a_k  \|_{C^2_{x,t}} \|g_k\|_{L^1}  \big\|    \bpk   \big\|_{L^\infty(0,1; W^{2,r}(\TT^2))}\\
&\lesssim C_u  \sigma^{-1} \cdot \kappa^{- \frac{1}{ 2}} \cdot (\sigma \mu)^2  \mu^{-1} (\nu \mu)^{ \frac{1}{2} -\frac{1}{r}}.
\end{split}
\end{equation}

As for $ \nabla w^{(t)}$, the oscillation part \eqref{eq:def_w_o} simply satisfies 
\[
\| \nabla w^{(o)} \|_{L^\infty(0,1;L^r(\TT^2) \times [0,1]) }  \lesssim    \sigma^{-1} \sum_{k\in \L }  \| h_k\|_{L^\infty([0,1])} \| \nabla \D R_k     \|_{L^\infty(0,1;L^r( \TT^2)) } \lesssim C_u  \l^{- \gamma },
\]
due to the choice of parameter $\sigma$, and the acceleration part \eqref{eq:def_w_a} enjoys the bound
\[
\begin{split}
 \| \nabla w^{(a)} \|_{L^1(0,1; L^r(\TT^2))} & \lesssim \omega^{-1} \sigma   \sum_{k \in \L } \big\| a_k ^2 \big\|_{C^1_{x,t}}   \|g_k\|_{L^1} \big\| \nabla |\bwk |^2 \ek  \big\|_{ L^\infty(0,1;L^r(\TT^2))}\\
 &\lesssim  C_u  \omega^{-1} \sigma \cdot  \kappa^{-\frac{1}{2}} \cdot (\sigma \mu) (\nu \mu)^{1 - \frac{1}{r}}\\
 &\lesssim C_u \lambda^{-\gamma},
\end{split}
\]
where we used Lemma~\ref{lemma:estimates_a_k}, \eqref{eq:L^p_bpund_g_k}, Theorem~\ref{thm:main_thm_for_W_k}, and Condition \eqref{eq:lap_error_w^a} for the last inequality.

\noindent
{\bf Estimate of  $ R_{\Acc  }  $:}

For the acceleration part of the linear error,
taking time derivative of \eqref{eq:recall_of_w^p+w^c} and using identity \eqref{eq:p_t_of_Psi}, we obtain
\[
\begin{split}
\p_t( w^{(p)}  +w^{(c)})   &= \sigma^{-1} \sum_{k  } \nabla^\perp \big[\p_t (a_k g_k)\bpk    \big] +
\sigma^{-1} \sum_{k  } \nabla^\perp  \big[a_k g_k \p_t\bpk    \big]\\
&=\sigma^{-1} \sum_{k  } \nabla^\perp \big[\p_t (a_k g_k)   \bpk    \big] +
\sigma^{-2} \omega\sum_{k } \nabla^\perp  \big[a_k   g_k^2(\ek \cdot \nabla  )\bpk    \big].
\end{split}
\]
Now thanks to the fact that $ \mathcal{R}\nabla^\perp$ is a Calder\'on-Zygmund operator on $\TT^2$ (see \eqref{eq:C-Z_bound}), we can use Lemma \ref{lemma:estimates_a_k}, \eqref{eq:L^p_bpund_g_k}, and  Theorem \ref{thm:main_thm_for_W_k} to estimate the first term:
\[
\begin{split}
\sigma^{-1} \Big\| \sum_{k  } \mathcal{R}\nabla^\perp \big[\p_t(a_k g_k) \bpk   \big]\Big\|_{  L^1(0,1;L^r) } 
&\lesssim \sigma^{-1}\sum_{k  } \big\|a_k\big\|_{C^1_{x,t}}  \big\|g_k\big\|_{W^{1,1}} \big\|\bpk\big\|_{L^\infty_tL^r }\\
&\lesssim C_u\sigma^{-1} (\sigma \kappa) \kappa^{-\frac{1}{2}} \cdot \mu^{-1}(\nu \mu)^{\frac{1}{2}  - \frac{1}{r}}.
\end{split}
\]
As for the second term, we recall that the  derivative of $\bpk$ in the direction $\ek$ is of order $\sigma \nu$ (rather than $\sigma \mu$ for the full gradient, see \eqref{eq:e_k_derivative_of_Psi}):
\[
\|(\ek \cdot \nabla)\bpk\|_{L^\infty_tL^r} \lesssim \sigma \nu \cdot \mu^{-1}(\nu \mu)^{\frac{1}{2} - \frac1r}.
\]
This estimate together with Lemma \ref{lemma:estimates_a_k} and \eqref{eq:L^p_bpund_g_k} implies
\[
\begin{split}
\sigma^{-2} \omega \Big\| \sum_{k  }   \mathcal{R}\nabla^\perp \big[a_k g_k^2 ( \ek \cdot \nabla  )\bpk  \big]\Big\|_{L^1(0,1;L^r) } 
&\lesssim \sigma^{-2} \omega \sum_{k } \big\|a_k\big\|_{L^\infty_{x,t}} \big\|g_k^2\big\|_{L^1} \big\| ( \ek \cdot \nabla  ) \bpk\big\|_{L^\infty_tL^r }\\
&\lesssim C_u\sigma^{-2}\omega \cdot \sigma \nu \mu^{-1}(\nu \mu)^{\frac{1}{2}  - \frac{1}{r}}.
\end{split}
\]
Due to  \eqref{eq:tem_error}, combining both terms we get
\[
\begin{split}
\| R_{\Acc  } \|_{L^1(0,1; L^r(\TT^2))}&\lesssim C_u\left(   \kappa^{\frac{1}{2}}  \mu^{-1}(\nu \mu)^{\frac{1}{2}  - \frac{1}{r}} + \omega\sigma^{-1}    \nu \mu^{-1}(\nu \mu)^{\frac{1}{2}  - \frac{1}{r}}\right)\\
&\lesssim C_u \lambda^{-\gamma}.
\end{split}
\]

\noindent
{\bf Estimate of  $ R_{\Dri }  $:}

Using the $L^r$ boundedness of  $\mathcal{R}$,
\[
\begin{split}
\|  R_{\Dri }   \|_{L^1(0,1; L^r(\TT^2))} &= \|\mathcal{R}\left( \D ( w \otimes  u  )+ \D ( u  \otimes w ) \right) \|_{L^1(0,1; L^r(\TT^2))}\\
&\lesssim \| w \otimes  u  \|_{L^1(0,1; L^r(\TT^2))}\\
&\lesssim \| w \|_{L^1(0,1; L^r(\TT^2))}    \| u  \|_{L^\infty_{x,t}}\\
&\lesssim C_u \lambda^{-\gamma},
\end{split}
\]
by $L^\infty L^p$ estimates on $w$ in Section~\ref{lemma:estimates_a_k}, and the fact that $r \leq p$.

Combining the above estimates, we can conclude that for all sufficiently large $\l$, there holds
\begin{align*}
\| R_{\Lin} \|_{L^1(0,1; L^r(\TT^2))} \leq  \frac{\delta}{4}.
\end{align*}

\end{proof}

\subsubsection{Correction error}
\begin{lemma}
For sufficiently large $\lambda$,
\[
\| R_{\Cor} \|_{L^1(0,1; L^r(\TT^2))} \leq \frac{\delta}{4}.
\]
\end{lemma}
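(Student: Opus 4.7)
The plan is to use that $\mathcal{R}\D$ is a Calder\'on--Zygmund type operator (hence bounded on $L^r$ for $1<r<\infty$), which reduces the estimate to bounding the tensor product
\[
(w^{(c)}+w^{(t)})\otimes w + w^{(p)}\otimes (w^{(c)}+w^{(t)})
\]
in $L^1_t L^r_x$. Since $\frac{1}{r} = \frac{1}{2} + \frac{1}{q}$ by \eqref{eq:def_q}, H\"older's inequality in space followed by Cauchy--Schwarz in time yields the two master estimates
\[
\|(w^{(c)}+w^{(t)})\otimes w\|_{L^1_t L^r_x} \leq \|w^{(c)}+w^{(t)}\|_{L^2_t L^q_x}\,\|w\|_{L^2_{t,x}},
\]
\[
\|w^{(p)}\otimes (w^{(c)}+w^{(t)})\|_{L^1_t L^r_x} \leq \|w^{(p)}\|_{L^2_{t,x}}\,\|w^{(c)}+w^{(t)}\|_{L^2_t L^q_x}.
\]

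The strategy is then to match each factor to an already-established bound. From Proposition \ref{p:estimates_on_w^p}, the principal part satisfies $\|w^{(p)}\|_{L^2_{t,x}} \lesssim \|R\|_{L^1}^{1/2} + C_u \sigma^{-1/2}$, which is bounded by a constant $C_u$ independent of $\lambda$. Propositions \ref{Prop_Bound_w^t} and \ref{Prop_Bound_w^c} give the key smallness
\[
\|w^{(c)}\|_{L^2_t L^q_x} + \|w^{(t)}\|_{L^2_t L^q_x} \lesssim C_u \lambda^{-\gamma},
\]
and together with the bound on $w^{(p)}$ also give $\|w\|_{L^2_{t,x}} \lesssim C_u$ (the corrector pieces are small, while $w^{(p)}$ is bounded). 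Combining these ingredients produces
\[
\|R_{\Cor}\|_{L^1_t L^r_x} \lesssim C_u \lambda^{-\gamma},
\]
which can be made smaller than $\delta/4$ by choosing $\lambda$ sufficiently large.

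The only subtle point, which is not really an obstacle but worth noting, is the use of $L^r$ boundedness of $\mathcal{R}\D$: this is precisely why one cannot take $r=1$ (and why Lemma \ref{lemma:parameters} only delivers $r>1$). All other manipulations are routine H\"older splits designed to combine one ``bounded'' factor with one $\lambda^{-\gamma}$-small factor, which is exactly the structural advantage of $R_{\Cor}$ over $R_{\Osc}$ (every term is at least linear in the correctors $w^{(c)} + w^{(t)}$).
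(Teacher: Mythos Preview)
Your proposal is correct and follows essentially the same approach as the paper: bound $\mathcal{R}\D$ in $L^r$, then apply H\"older with the splitting $\tfrac{1}{r}=\tfrac{1}{2}+\tfrac{1}{q}$ to pair one $L^2_{t,x}$ factor (bounded via Proposition~\ref{p:estimates_on_w^p}) with one $L^2_tL^q_x$ corrector factor (small by Propositions~\ref{Prop_Bound_w^t} and~\ref{Prop_Bound_w^c}). Your added remark about the necessity of $r>1$ for the Calder\'on--Zygmund boundedness is a correct and useful observation.
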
 

\begin{proof}
Using H\"older's inequality, the fact that $\mathcal{R}\D$ is bounded in $L^r$, and \eqref{eq:def_q}, we obtain
\begin{align*}
\| R_{\Cor} \|_{L^1(0,1; L^r(\TT^2))}  &\lesssim  \|(w^{(c)} + w^{(t)})  \otimes w \|_{L^1_t L^r_x }  +     \|  w^{(p)} \otimes (w^{(c)} + w^{(t)} )) \|_{L^1_t L^r_x } \\
& \lesssim  \Big( \|w^{(c)} \|_{L^2_t L^{q }_x} + \|w^{(t)} \|_{L^2_t L^{q}_x } \Big)  \|w \|_{L^2_{x,t}  } \\
&\qquad + \|w^{(p)} \|_{L^2_{x,t}} \Big(  \|w^{(c)} \|_{L^2_t L^{q}_x }  + \|w^{(t)} \|_{L^2_t L^{q}_x }   \Big).
\end{align*}
Recall that $w^{(t)} = w^{(o)} + w^{(a)}$. Then by Propositions~\ref{p:estimates_on_w^p}, \ref{Prop_Bound_w^t}, and \ref{Prop_Bound_w^c} we have
\[
\begin{split}
\|w\|_{L^2_{x,t}} &\leq \|w^{(p)}\|_{L^2_{x,t}} +\|w^{(c)}\|_{L^2_{x,t}}+\|w^{(t)}\|_{L^2_{x,t}}\\
&\lesssim \| R  \|^{\frac{1}{2}}_{L^1(  \TT^2 \times [0,1]) },
\end{split}
\]
as well as
\[
\|w^{(c)}\|_{L^2_t L^{ q }_x }+\|w^{(t)}\|_{L^2_t L^{q }_x } \leq C_u  \lambda^{- \gamma  }  .
\]
Thus for all sufficiently large $\lambda$, we can conclude that
\[
\|R_{\Cor}\|_{L^1(0,1; L^r(\TT^2))} \leq \frac{\delta}{4}.
\]

\end{proof}

\subsubsection{Oscillation error}

As we mentioned before, we estimate the oscillation error on $\TT^2 \times I$ since the perturbations are only designed to balance the old stress on $\TT^2 \times I$, cf. the remainder $R_{\Rem}$ below.
\begin{lemma}
For sufficiently large $\lambda$,
\[
\| R_{\Osc} \|_{L^1(I; L^r(\TT^2))} \leq \frac{\delta}{4}.
\]
\end{lemma}
\begin{proof}
We will use the decomposition from Lemma~\ref{lemma:R_osc_decomposition}
$$
R_{\Osc} = R_{\Osc,x}   + R_{\Osc,t} + R_{\Osc, a}  + R_{\Rem }
$$
where we recall
\begin{align*}
R_{\Osc,x} &=  g_k^2\sum_{k \in \L} \mathcal{B}\Big(\nabla(a_k^2)   ,    \bwk \otimes \bwk  -\fint \bwk \otimes \bwk \Big), \\
R_{\Osc,t} &= - \sigma^{-1}\theta^2  \mathcal{R}\sum_{k\in \L }     h_k      \D \p_t R_k   \\ 
R_{\Osc, a} &= -  \omega^{-1} \sigma   \sum_{k \in \L } \mathcal{R}\left( \p_t \left(a_k^2g_k \right) |\bwk  |^2 \ek \right) \\
R_{\Rem } &=  (1 -\theta^2)      R .
\end{align*}
\noindent
{\bf Estimate of  $R_{\Osc,x}$:}

To reduce notations, denote $\mathbf{T}_k : \TT^2 \times [0,1]\to \RR^{2\times 2}$ by 
\[
\mathbf{T}_k =  \bwk   \otimes \bwk   - \fint \bwk \otimes \bwk  , 
\]
so that
\[
R_{\Osc,x}(x  ,  t   ) =  g_k^2\sum_{k \in \L} \mathcal{B}\big(\nabla(a_k^2)  , \mathbf{T}_k \big) .
\]

Using Theorem \ref{thm:bounded_B} and the fact that $\mathbf{T}_k $ has zero spatial mean, we can estimate the $L^r$ norm of $R_{\Osc,x}$ at each time $t \in [0,1]$ as follows.  
\[
\begin{split}
\|R_{\Osc,x} (t )\|_{L^r(\TT^2)} &=  g_k^2\Big\|   \sum_{k } \mathcal{B}\Big(\nabla(a_k^2)  , \mathbf{T}_k \Big)\Big\|_{L^r}\\
&\lesssim g_k^2\sum_{k } \|\nabla (a_k^2)\|_{C^1} \| \mathcal{R}\big(  \mathbf{T}_k  \big)\|_{L^r}\\
&\lesssim \sigma^{-1}g_k^2\sum_{k } \|\nabla (a_k^2)\|_{C^1}  \|\mathbf{T}_k  \|_{L^r},
\end{split}
\]
where the last inequality used the fact that $ \mathbf{T}_k$ has zero spatial mean.
Thanks to Theorem~\ref{thm:main_thm_for_W_k}, for any $k\in \L$
\[
\| \mathbf{T}_k\|_{L^\infty_tL^r} \leq \| \bwk \otimes \bwk   \|_{L^\infty_tL^r} \lesssim \|\bwk\|_{L^
\infty_t L^{2r}}^2 \lesssim  
(\nu \mu)^{1 - \frac{1}{r}}
\]
Therefore, integrating in time and using Lemma~\ref{lemma:estimates_a_k}, \eqref{eq:L^p_bpund_g_k}, and \eqref{eq:osc_error}, we  obtain
\[
\begin{split}
\| R_{\Osc,x} \|_{L^1(0,1; L^r(\TT^2))} &\lesssim C_u \sigma^{-1}  (\nu \mu)^{1 - \frac{1}{r}}\\
&\lesssim C_u \lambda^{-\gamma}.
\end{split}
\]



\noindent
{\bf Estimate of $R_{\Osc,t}$:} 

Using the bound on $h_k$ \eqref{eq:bound_on_h_k}, we infer
\[
\begin{split}
\|R_{\Osc,t}\|_{L^1(0,1; L^r(\TT^2))} &\leq \Big\|\sigma^{-1} \sum_k h_k  \D \p_t  R_k \Big\|_{L^1(0,1; L^r(\TT^2))}\\
&\lesssim  \sigma^{-1} \Big\|\sum_k h_k\Big\|_{L^1} C_u\\
&\leq  C_u\sigma^{-1}.
\end{split}
\]

\noindent
{\bf Estimate of $R_{\Osc, a}$:}

Thanks to Theorem~\ref{thm:main_thm_for_W_k} ,
\[
\begin{split}
\|R_{\Osc, a}\| _{L^1(0,1; L^r(\TT^2))}= & \omega^{-1} \sigma    \Big\| \sum_{k  } \mathcal{R}\big( \p_t \left(a_k^2g_k \right)  |\bwk |^2 \ek  \big) \Big\|_{L^1(0,1; L^r(\TT^2))}\\
&\lesssim \omega^{-1} \sigma \sum_{k } \left\|\p_t \left(a_k^2g_k \right)\right\|_{L^1(0,1; C^1(\TT^2))} \|( |\bwk |^2 \ek) \|_{L^\infty_tL^{r}}\\
&= \omega^{-1} \sigma \sum_{k } \left\|\p_t \left(a_k^2g_k \right)\right\|_{L^1(0,1; C^1(\TT^2))} \|  \bwk \|^2_{L^\infty_tL^{2r}}\\
&\lesssim  \omega^{-1} \sigma (\nu\mu)^{1-\frac{1}{r}} \sum_{k } \left\|\p_t \left(a_k^2g_k \right)\right\|_{L^1(0,1; C^1(\TT^2))}.
\end{split}
\]
Using the product rule,
\[
\p_t \left(a_k^2g_k \right) = (\p_t a_k^2) g_k + a_k^2 g_k',
\]
thanks to Lemma ~\ref{lemma:estimates_a_k} and \eqref{eq:L^p_bpund_g_k} we obtain
\begin{equation}
\begin{split}
\left\|\p_t \left(a_k^2g_k \right)\right\|_{L^1(0,1; C^1(\TT^2))} &\lesssim C_u \left(\|g_k\|_{L^1} + \|g_k'\|_{L^1} \right)\\
&\lesssim C_u \big(\kappa^{-\frac{1}{2}} + \sigma \kappa^{\frac{1}{2}} \big).
\end{split}
\end{equation}
Hence
\[
\|R_{\Osc,a }\|_{L^1(0,1; L^r(\TT^2))} \lesssim C_u \omega^{-1}(\nu\mu)^{1-\frac{1}{r}}\cdot \sigma^2 \kappa^{\frac{1}{2}} \lesssim C_u \lambda^{-\gamma},
\]
due to \eqref{eq:osc_erro_w^a}.

\noindent
{\bf Estimate of $R_{\Rem}$:} 

Finally, the estimate of $R_{\Rem}$ follows from the definition of the cutoff $\theta$ as in \eqref{eq:def_theta}:
\begin{align*}
\|R_{\Rem }\|_{L^1(I; L^r(\TT^2))} &= \|(1 -\theta^2)R\|_{L^1(I; L^r(\TT^2))}\\ 
&\leq \int_{I \cap \{\theta \neq 1\} } \|R(t)\|_{L^\infty(\TT^2)}  \,  dt \\
& \leq  |\{ t \in  I: \theta \neq 1 \} | \|R \|_{L^\infty( \TT^2 \times I)}\\ & \leq \delta/8. 
\end{align*}

Combining this estimate with
\[
\| R_{\Osc,x}\|_{L^1(0,1; L^r(\TT^2))}    +  \| R_{\Osc,t} \|_{L^1(0,1; L^r(\TT^2))}  + \| R_{\Osc, a} \|_{L^1(0,1; L^r(\TT^2))}   \lesssim C_u \lambda^{-\gamma},
\]
we conclude that the desired bound
\[
\| R_{\Osc} \|_{L^1(I; L^r(\TT^2))} \leq \frac{\delta}{4}
\]
holds for all $\lambda$ large enough.
\end{proof}

\appendix

\section{Vanishing viscosity limit in \texorpdfstring{$L^\infty L^2$}{L2}}\label{sec:append_zeroviscosity}
Here, we adopt a proof from \cite{MR3551263} to our settings and show that vanishing viscosity solution of the 2D Euler equations with initial vorticity in $L^p$, $p>1$, conserve the kinetic energy.
 
\begin{theorem} \label{thm:2DEuler_energy_consrvation}
Let $u\in C([0,T];W^{1,1}(\TT^2))$ be a weak solution of the 2D Euler equations with $\curl u(0) \in L^p(\TT^2)$ for some $p>1$, such that there is a family $\{u^\nu\}$ of weak solutions to the 2D Navier-Stokes equations with
\[
\|u^\nu - u\|_{L^\infty(0,T; L^2(\TT^2))} \to 0 \qquad \text{as} \quad \nu \to 0.
\]
Then $u(t)$ conserves the kinetic energy.
\end{theorem}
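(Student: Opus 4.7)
The plan is to adapt the vanishing-viscosity argument of Cheskidov--Lopes Filho--Nussenzveig Lopes--Shvydkoy \cite{MR3551263}. The starting point is the energy equality for the approximations,
\[
\tfrac{1}{2}\|u^\nu(t)\|_2^2 + \nu \int_0^t \|\nabla u^\nu(s)\|_2^2\,ds = \tfrac{1}{2}\|u^\nu(0)\|_2^2,
\]
which holds because 2D weak solutions in $L^\infty_tL^2_x\cap L^2_tH^1_x$ are automatically Leray--Hopf (in the spirit of Theorem~\ref{thm:CL2_uniqueness}). Using the assumed $L^\infty_tL^2_x$-convergence $u^\nu \to u$, both boundary terms pass to the $\nu \to 0$ limit, and conservation of $\|u(t)\|_2$ reduces to the vanishing of the viscous dissipation,
\[
\nu \int_0^t \|\nabla u^\nu(s)\|_2^2\,ds = \nu \int_0^t \|\omega^\nu(s)\|_2^2\,ds \longrightarrow 0,
\]
where I used $\|\nabla u^\nu\|_2 = \|\omega^\nu\|_2$ for divergence-free, mean-zero vector fields on $\TT^2$.

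Next, I would exploit the hypothesis $\curl u(0) \in L^p$, $p>1$, to get a uniform $L^\infty_tL^p_x$-bound on the vorticities $\omega^\nu$. For the family under consideration, the initial data $u^\nu(0)$ can be arranged to coincide with a mollification $u(0)\ast \eta_\nu$ at lengthscale $\sim \nu$ (as in the proof of Theorem~\ref{thm:2deuler_vanishing_limit}, since the perturbations supplied by Proposition~\ref{prop:main} vanish at $t=0$), so that $\|\omega^\nu(0)\|_p \leq \|\curl u(0)\|_p$ uniformly in $\nu$. Testing the vorticity equation $\p_t\omega^\nu + u^\nu\cdot\nabla \omega^\nu = \nu\Delta\omega^\nu$ with $|\omega^\nu|^{p-2}\omega^\nu$ and integrating by parts (using $\D u^\nu = 0$) yields the $L^p$ maximum principle $\|\omega^\nu(t)\|_p \leq \|\omega^\nu(0)\|_p \leq C$.

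The decisive step is to upgrade this uniform $L^\infty_tL^p_x$-control to the vanishing of the enstrophy dissipation. Here I would invoke the $L^p\to L^2$ parabolic smoothing available for drift-diffusion equations with divergence-free drift (in the spirit of Kato, Ben-Artzi, or Gallay--Wayne for 2D Navier--Stokes with rough vorticity). A Moser iteration starting from the uniform $L^p$ a priori bound yields
\[
\|\omega^\nu(t)\|_2 \leq C\,(\nu t)^{-(1/p-1/2)}\,\|\omega^\nu(0)\|_p,
\]
which, squared and integrated over $[0,T]$, produces
\[
\nu\int_0^T \|\omega^\nu(t)\|_2^2\,dt \leq C\,\nu^{2-2/p}\,T^{2-2/p},
\]
and this tends to $0$ because $p>1$ makes the exponent $2-2/p$ strictly positive (and the time integral converges at $0$ because $2/p-1<1$).

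The main obstacle is justifying the smoothing estimate uniformly in $\nu$: the drift $u^\nu$ is only controlled in $L^\infty_tL^2_x$, so direct heat-kernel manipulations are delicate, and the Moser iteration has to be run carefully, keeping track of the interaction between the transport and dissipative pieces. An alternative, and in fact the route taken in \cite{MR3551263}, is to avoid the smoothing bound altogether and instead decompose $\omega^\nu$ according to its level sets, balancing the low-amplitude part (controlled by the $L^p$ bound) against the high-amplitude part (controlled via the a priori enstrophy-in-time bound and Chebyshev), and then optimizing in the truncation level as $\nu \to 0$. Either way, once the enstrophy dissipation is shown to vanish, combining with the energy equality of the first step gives $\|u(t)\|_2 = \|u(0)\|_2$ for every $t\in[0,T]$, which is the claim.
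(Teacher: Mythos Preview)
Your overall strategy is correct and matches the paper: reduce to the energy equality for the approximants, pass the boundary terms to the limit via the assumed $L^\infty_tL^2_x$ convergence, and show that the viscous dissipation $\nu\int_0^t\|\omega^\nu\|_2^2\,ds$ vanishes using the $L^p$ control on the vorticity. The endpoint smoothing estimate you quote, $\|\omega^\nu(t)\|_2^2\lesssim(\nu t)^{(p-2)/p}$, is exactly what the paper obtains.

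The main difference is in how that decay is derived. You propose Moser iteration (and cite the level-set decomposition of \cite{MR3551263} as a fallback), worrying about the rough drift. The paper instead takes a shorter route: from the enstrophy identity $\frac{d}{dt}\|\omega^\nu\|_2^2=-2\nu\|\nabla\omega^\nu\|_2^2$ and the Gagliardo--Nirenberg inequality $\|\omega\|_2\lesssim\|\omega\|_p^{p/2}\|\nabla\omega\|_2^{1-p/2}$ in 2D, one gets the closed ODE
\[
\frac{d}{dt}\|\omega^\nu\|_2^2 \;\le\; -c\,\nu\,\|\omega_0\|_p^{2p/(p-2)}\,\|\omega^\nu\|_2^{4/(2-p)},
\]
which integrates directly to $\|\omega^\nu(t)\|_2^2\lesssim(\nu t)^{(p-2)/p}$ without any iteration and without ever touching the drift (the transport term drops out of every $L^q$ balance because $\D u^\nu=0$, so your stated ``main obstacle'' is not an obstacle at all). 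This is more elementary than either of the routes you outline.

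One small conceptual slip: the family $\{u^\nu\}$ is given by hypothesis, so you cannot ``arrange'' $u^\nu(0)$ to be a mollification of $u(0)$. The paper silently takes $\omega^\nu(0)=\omega_0$ (it writes $\|\omega_0\|_{L^p}$ throughout), and the $L^p$ maximum principle then gives the uniform bound $\|\omega^\nu(t)\|_p\le\|\omega_0\|_p$ needed for the ODE above.
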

\begin{proof}
Thanks to the Sobolev embedding $W^{1,1}(\TT^2) \subset L^2(\TT^2) $, we have that $u\in C([0,T];L^2(\TT^2))$, and hence
for any $\varepsilon >0$,  there exists a decomposition $u^\nu = u^\nu_1  +u^\nu_2$ such that
$$
\| u_1^\nu \|_{L^\infty_t L^{2} } \leq \varepsilon \quad \text{ and }\quad  u_2^\nu \in L^\infty_{t,x},
$$
for $\nu$ small enough. Therefore, by the previous theorem, for all $\nu$ small enough, $u^\nu$ is a Leray-Hopf solution of the 2D NSE and is smooth for $t>0$. 

Without loss of generality, assume that $\omega_0 \notin L^2$, as otherwise the result is trivial.  Note that
\[\frac{d}{dt} \|\omega^{\nu} \|_{L^2}^2 = -2\nu\|\nabla\omega^{\nu}\|_{L^2}^2
\lesssim -\nu \| \omega_0\|_{L^p}^{\frac{2p}{p-2}} \|\omega^{\nu}\|_{L^2}^{\frac{4}{2-p}}.\]
Therefore
\[
\| \omega^{\nu}(t)\|_{L^2}^2 \lesssim (\nu t)^{\frac{p-2}{p}}.
\]
Hence
\[
0 \geq \|u^\nu(t)\|_{L^2}^2 - \|u^{\nu}(0)\|_{L^2}^2 \geq  - c(\nu t)^{\frac{2p-2}{p}}.
\]
Taking a limit as $\nu \to 0$, this concludes the proof.
\end{proof}

\section{Some technical inequalities}\label{sec:append_improved_holder}

\subsection{Calderon-Zygmund operators on \texorpdfstring{$\TT^d$}{Td}} For $1<p<\infty$, the classical Calderon-Zygmund estimates hold on $\RR^d$:
\begin{equation*}
    \| \nabla^2  f\|_{L^p(\RR^d)} \lesssim \| \Delta f\|_{L^p(\RR^d)}. 
\end{equation*}

It is standard to transfer the estimates on $\RR^d$ to $\TT^d$:
\begin{equation}\label{eq:Calderon-Zygmund_TT}
    \| \nabla^2  f  \|_{L^p(\TT^d)} \lesssim \| \Delta f\|_{L^p(\TT^d)}. 
\end{equation}

In particular, \eqref{eq:Calderon-Zygmund_TT} implies the bounds used in the paper:
\begin{equation} \label{eq:C-Z_bound}
\| \mathcal{R}\nabla  f \|_{L^p(\TT^2)} \lesssim \|    f \|_{L^p(\TT^2)}\qquad \text{for any $f \in C^\infty(\TT^2)$}, \\
\end{equation}
where $\mathcal{R}$ is the antidivergence operator defined in the next section.

\subsection{Improved H\"older's inequality on \texorpdfstring{$\TT^d$}{Td}}
We recall the following result due to Modena and Sz\'ekelyhidi \cite[Lemma 2.1]{MR3884855}, which extends the first type of such result \cite[Lemma 3.7]{MR3898708}.

\begin{lemma}\label{lemma:improved_Holder}
Let $d \geq 2$, $r \in [1,\infty]$, and $a,f :\TT^d \to \RR$ be smooth functions. Then for every $\sigma \in \NN$,
\begin{equation}
\Big|   \|a f(\sigma \cdot ) \|_{r }  - \|a \|_{r} \| f \|_{r } \Big|\lesssim \sigma^{-\frac{1}{r}} \| a\|_{C^1} \| f \|_{ r }.
\end{equation}
\end{lemma}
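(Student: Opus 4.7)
The proof exploits scale separation between the slowly varying amplitude $a$ and the rapidly oscillating $f(\sigma \cdot)$, which is $\sigma^{-1}\mathbb{T}^d$-periodic. My plan is first to work with $r$-th powers, where the estimate is a Riemann sum comparison, and then extract the $r$-th root by a convexity inequality. I will treat $1 \leq r < \infty$ and note that $r = \infty$ is trivial (the bound degenerates to $\lesssim \|a\|_{C^1}\|f\|_\infty$, which already follows from submultiplicativity).

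The main step is to partition $\mathbb{T}^d$ into the $\sigma^d$ congruent cubes $Q_j$ of side length $\sigma^{-1}$, pick any reference point $x_j \in Q_j$, and write
\begin{equation*}
\int_{Q_j} |a(x)|^r |f(\sigma x)|^r\, dx
= |a(x_j)|^r \int_{Q_j} |f(\sigma x)|^r\, dx + \int_{Q_j}\bigl(|a(x)|^r - |a(x_j)|^r\bigr) |f(\sigma x)|^r\, dx.
\end{equation*}
The key periodicity identity is $\int_{Q_j}|f(\sigma x)|^r\, dx = \sigma^{-d}\|f\|_r^r$ independent of $j$. Summing the main term over $j$ produces the Riemann sum $\sigma^{-d}\sum_j |a(x_j)|^r$, which approximates $\|a\|_r^r$ up to error $\lesssim \sigma^{-1}\|\nabla(|a|^r)\|_\infty \lesssim \sigma^{-1}\|a\|_\infty^{r-1}\|\nabla a\|_\infty$. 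For the remainder, the mean value theorem applied to $t\mapsto |t|^r$ gives $\bigl||a(x)|^r - |a(x_j)|^r\bigr| \lesssim \|a\|_\infty^{r-1}\|\nabla a\|_\infty\, \sigma^{-1}$ uniformly on $Q_j$, so summing yields the same bound times $\|f\|_r^r$. Both contributions together give
\begin{equation*}
\Bigl|\|a f(\sigma\cdot)\|_r^r - \|a\|_r^r\|f\|_r^r\Bigr| \lesssim \sigma^{-1}\|a\|_{C^1}^r\, \|f\|_r^r,
\end{equation*}
where I absorbed $\|a\|_\infty^{r-1}\|\nabla a\|_\infty$ into $\|a\|_{C^1}^r$.

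Finally, to pass from an $r$-th power bound to a bound on the difference itself, I would use the elementary inequality $|A-B|^r \leq |A^r - B^r|$ valid for all $A,B \geq 0$ and $r \geq 1$ (proved by reducing to $(1-t)^r \leq 1 - t^r$ for $t\in[0,1]$). Setting $A = \|af(\sigma\cdot)\|_r$, $B = \|a\|_r\|f\|_r$ and taking $r$-th roots of the displayed estimate produces the claimed $\sigma^{-1/r}\|a\|_{C^1}\|f\|_r$ bound.

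The only mildly nontrivial point is the $r$-th root extraction — if one just tries to estimate $\|a f(\sigma\cdot)\|_r$ directly, the natural bound uses Minkowski on each cube and loses a full power of $\sigma$ rather than $\sigma^{1/r}$. Working at the level of $r$-th powers first, where the estimate is linear in $|f(\sigma x)|^r$ and the periodicity collapses cleanly, is what yields the sharp exponent. Everything else (the Riemann sum error and the pointwise oscillation of $|a|^r$) is an elementary $C^1$ computation.
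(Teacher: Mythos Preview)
Your proof is correct. The paper does not give its own argument for this lemma but cites Modena--Sz\'ekelyhidi \cite[Lemma~2.1]{MR3884855}; your approach---partitioning $\TT^d$ into $\sigma^{-1}$-cubes, using periodicity and a Riemann-sum comparison at the level of $r$-th powers, then extracting the root via the convexity inequality $|A-B|^r \leq |A^r - B^r|$---is precisely the standard proof given in that reference.
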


This result is used to control the energy of the perturbations in Section \ref{sec:proof_step_2}. Note that the error term on the right-hand side can be made arbitrarily small by increasing the oscillation $ \sigma$.

\section{Antidivergence operators}\label{sec:append_antidiv}

For any $f\in C^\infty(\TT^2)$, there exists a $v \in C^\infty_0(\TT^2)$ such that
$$
\Delta v = f - \fint_{\TT^2} f.
$$
And we denote this solution $v$ by $\Delta^{-1}f$. Note that if $f\in C^\infty_0(\TT^2) $, then by rescaling  we have 
$$
\Delta^{-1} \big( f(\sigma \cdot )  \big) = \sigma^{-2}  v(\sigma \cdot ) \quad \text{ for $\sigma \in \NN$.}
$$

\subsection{Tensor-valued antidivergence \texorpdfstring{$ \mathcal{R}$}{R}}

We recall the following antidivergence operator $\mathcal{R}$ introduced in \cite{MR3090182}.

\begin{definition}
$\mathcal{R} : C^\infty(\TT^2 ,\RR^2) \to C^\infty(\TT^2, \mathcal{S}^{ 2 \times 2 }_0)$ is defined by
\begin{equation}\label{eq:appendix_R_def}
(\mathcal{R} v )_{ij} =  \mathcal{R}_{ijk} v_k
\end{equation}
where
$$
 \mathcal{R}_{ijk} =    -\Delta^{-1} \p_k \delta_{ij} + \Delta^{-1} \p_i \delta_{jk} + \Delta^{-1} \p_j \delta_{ik} .
$$
\end{definition}

It is clear that $\mathcal{R} $ is well-defined since $\mathcal{R}_{ijk}$ is symmetric in $i,j$ and taking the trace gives
\begin{align*}
\Tr    \mathcal{R} v&=    -2 \Delta^{-1} \partial_k   v_k  +   \Delta^{-1}   \partial_k v_k + \Delta^{-1}   \partial_k v_k=0.
\end{align*}
By a direct computation, one can also show that
$$
\D (\mathcal{R} v  ) = v - \fint_{\TT^2} v  \quad \text{for any $v \in C^\infty(\TT^2 ,\RR^2)$}
$$
and
\begin{equation}\label{eq:appendix_R_2}
\mathcal{R} \Delta v   = \nabla v + \nabla v ^T  \quad \text{for any divergence-free $v \in C^\infty(\TT^2 ,\RR^2)$}.
\end{equation}

The antidivergence operator $ \mathcal{R}$ is bounded on  $L^{p} (\TT^2) $ for any $1\leq p \leq \infty$~\cite{MR3884855}.
\begin{theorem}\label{thm:bounded_R}
Let $1 \leq p \leq \infty$. For any vector field $f \in C^\infty_0(\TT^2,\RR^2)$, there holds
$$
\| \mathcal{R} f \|_{L^{p}(\TT^2 )} \lesssim \|  f \|_{L^{p}(\TT^2 )}.
$$

In particular, if $f \in C^\infty_0(\TT^2 ,\RR^2)$, then
$$
\| \mathcal{R} f(\sigma \cdot ) \|_{L^{p}(\TT^2 )} \lesssim \sigma^{-1}\|  f \|_{L^{p}(\TT^2 )} \quad \text{for any $\sigma \in \NN$}.
$$

\end{theorem}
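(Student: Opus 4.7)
The operator $\mathcal{R}$ is a linear combination (with constant coefficients carried by the Kronecker symbols) of the scalar operators $\Delta^{-1}\partial_k$ acting componentwise on $f$. Hence it suffices to prove that $\Delta^{-1}\partial_k : L^p(\TT^d) \to L^p(\TT^d)$ is bounded for every $1 \leq p \leq \infty$ when restricted to mean-zero functions, and then the estimate for $\mathcal{R}$ follows by the triangle inequality. The key (and slightly delicate) point is that the endpoints $p=1$ and $p=\infty$ are included, so one cannot rely on Calderón--Zygmund theory.

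The crucial observation is that $\Delta^{-1}\partial_k$ is \emph{not} a singular integral of convolution type but a Riesz-potential-type operator: it gains one derivative. Concretely, let $G$ be the mean-zero periodization of the fundamental solution of $-\Delta$ on $\RR^d$, so that $G(x) \sim c_d |x|^{2-d}$ as $x\to 0$ in dimension $d\geq 3$ and $G(x) \sim -\frac{1}{2\pi}\log|x|$ in dimension $d=2$. Then for mean-zero $f$ one has $\Delta^{-1}\partial_k f = \partial_k G * f$ on the torus, and near the origin
\[
|\partial_k G(x)| \lesssim |x|^{1-d}.
\]
In every dimension $d\geq 2$ this singularity is locally integrable, so $\partial_k G \in L^1(\TT^d)$. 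Young's convolution inequality then gives, for every $1\leq p \leq \infty$,
\[
\|\Delta^{-1}\partial_k f\|_{L^p(\TT^d)} \leq \|\partial_k G\|_{L^1(\TT^d)} \|f\|_{L^p(\TT^d)},
\]
and summing over the contributions to $\mathcal{R}_{ijk}$ yields the first inequality.

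For the rescaling statement, I would work in Fourier series. If $f \in C^\infty_0(\TT^d,\RR^d)$ with Fourier coefficients $\hat f(k)$, $k\neq 0$, then $f(\sigma\cdot)$ has Fourier coefficients supported on $\sigma\ZZ^d$. Since $\Delta^{-1}\partial_j$ acts as multiplication by $\frac{i k_j}{2\pi |k|^2}$ in frequency, a direct computation gives the identity
\[
\Delta^{-1}\partial_j\bigl[f(\sigma\cdot)\bigr](x) = \sigma^{-1}\bigl(\Delta^{-1}\partial_j f\bigr)(\sigma x),
\]
and consequently $\mathcal{R}\bigl(f(\sigma\cdot)\bigr)(x) = \sigma^{-1}(\mathcal{R} f)(\sigma x)$. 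Since $\sigma\in\NN$, the change of variables $y=\sigma x$ together with $\sigma^{-1}\TT^d$-periodicity shows that $\|(\mathcal{R} f)(\sigma\cdot)\|_{L^p(\TT^d)} = \|\mathcal{R} f\|_{L^p(\TT^d)}$. Combining this with the first bound gives the claimed $\sigma^{-1}$ gain.

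The only genuinely non-routine step is recognizing that, in contrast to Riesz transforms, the operator $\Delta^{-1}\partial_k$ has a convolution kernel that is actually \emph{integrable} on the torus, which is what opens the endpoint cases $p=1$ and $p=\infty$; everything else is bookkeeping.
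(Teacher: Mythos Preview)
Your proof is correct. The paper does not actually give a proof of this theorem; it simply states the result and cites \cite{MR3884855} (Modena--Sz\'ekelyhidi) for the $L^p$ boundedness. Your argument---identifying $\mathcal{R}$ as a sum of operators $\Delta^{-1}\partial_k$ whose convolution kernels $\partial_k G$ are integrable on the torus (since $|\partial_k G(x)|\lesssim |x|^{1-d}$ near the origin), then invoking Young's inequality---is exactly the standard route and is essentially what one finds in the cited reference. The Fourier-side derivation of the scaling identity $\mathcal{R}\bigl(f(\sigma\cdot)\bigr)=\sigma^{-1}(\mathcal{R}f)(\sigma\cdot)$ is also correct and is the usual way this $\sigma^{-1}$ gain is justified.
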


\subsection{Bilinear antidivergence \texorpdfstring{$\mathcal{B}$}{B}}

We can also introduce the bilinear version $\mathcal{B} : C^\infty(\TT^2, \RR^2) \times C^\infty(\TT^2   ,\RR^{2\times 2} ) \to C^\infty(\TT^2, \mathcal{S}^{ 2 \times 2 }_0) $ of $\mathcal{R}$.

Let
\begin{equation*}
( \mathcal{B}( v,A ))_{i j}  = v_l \mathcal{R}_{ijk}A_{lk} - \mathcal{R}( \p_i v_l \mathcal{R}_{ijk}A_{lk} ) 
\end{equation*}
or by a slight abuse of notations
$$
  \mathcal{B}( v,A )  = v  \mathcal{R} A  - \mathcal{R}( \nabla v  \mathcal{R} A   ) .
$$
This bilinear antidivergence $\mathcal{B}$ allows us to gain derivative when the later argument has zero mean and a small period.

\begin{theorem}\label{thm:bounded_B}
Let  $1 \leq p \leq \infty$. For any $v \in C^\infty(\TT^2,\RR^2)$ and $A \in C^\infty_0(\TT^2,\RR^{2\times 2})$, 
\begin{equation} \label{eq:div_B_tensor}
\D( \mathcal{B}(v , A)  )  =  v   A   - \fint_{\TT^2} vA ,
\end{equation}
and
$$
\|  \mathcal{B} (v ,A) \|_{L^{p}(\TT^2 )} \lesssim \|   v \|_{C^{1}(\TT^2 )} \|   \mathcal{R} A \|_{L^{p}(\TT^2 )}.
$$
\end{theorem}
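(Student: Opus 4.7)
The proof naturally splits into two parts: verifying the divergence identity \eqref{eq:div_B_tensor}, and then deducing the $L^p$ bound. For both, the plan is to unpack the definition $\mathcal{B}(v,A) = v\,\mathcal{R}A - \mathcal{R}(\nabla v \cdot \mathcal{R}A)$ and use only the properties of $\mathcal{R}$ already established, namely $\D(\mathcal{R}f) = f - \fint f$ and the $L^p$ boundedness from Theorem~\ref{thm:bounded_R}.

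For the divergence identity, I would work index-by-index. Fix $l$ and treat $A_{l\cdot}$ as a vector; then $C_{lij} := \mathcal{R}_{ijk} A_{lk}$ is symmetric in $i,j$ and satisfies $\partial_j C_{lij} = A_{li} - \fint A_{li}$ by the fundamental property of $\mathcal{R}$. Multiplying by $v_l$ and applying the product rule gives
\[
\partial_j(v_l C_{lij}) = v_l A_{li} - v_l \fint A_{li} + (\partial_j v_l)\, \mathcal{R}_{ijk}A_{lk}.
\]
The key observation is that the unwanted error term $h_i := (\partial_j v_l)\,\mathcal{R}_{ijk}A_{lk}$ is itself a vector field, so applying $\mathcal{R}$ to it yields $\partial_j(\mathcal{R}h)_{ij} = h_i - \fint h_i$. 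Rearranging, the combination $v_l C_{lij} - (\mathcal{R}h)_{ij}$ has divergence exactly $vA - \fint(vA)$ in the $i$-th component; this is precisely $(\mathcal{B}(v,A))_{ij}$, so \eqref{eq:div_B_tensor} follows.

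For the $L^p$ estimate, I would simply use the triangle inequality on the two terms in the definition. The first term is controlled pointwise: $\|v\,\mathcal{R}A\|_{L^p} \le \|v\|_{L^\infty} \|\mathcal{R}A\|_{L^p}$. For the second term, Theorem~\ref{thm:bounded_R} (the $L^p$ boundedness of $\mathcal{R}$) gives $\|\mathcal{R}(\nabla v \cdot \mathcal{R}A)\|_{L^p} \lesssim \|\nabla v \cdot \mathcal{R}A\|_{L^p} \le \|\nabla v\|_{L^\infty}\|\mathcal{R}A\|_{L^p}$. Summing yields $\|\mathcal{B}(v,A)\|_{L^p} \lesssim \|v\|_{C^1}\|\mathcal{R}A\|_{L^p}$, as claimed.

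There is no real analytic obstacle here; the only mild subtlety is the index bookkeeping in the divergence identity, specifically ensuring that the error created by the product rule can itself be written as the divergence of something through a second application of $\mathcal{R}$. Once one recognizes that this is the whole point of subtracting $\mathcal{R}(\nabla v\cdot \mathcal{R}A)$ in the definition of $\mathcal{B}$, both statements fall out quickly from the corresponding properties of $\mathcal{R}$ in Theorem~\ref{thm:bounded_R}.
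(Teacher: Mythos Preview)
Your approach is correct and essentially the same as the paper's: apply the product rule to $v\,\mathcal{R}A$, then remove the resulting error $h_i = (\partial_j v_l)\mathcal{R}_{ijk}A_{lk}$ with a second application of $\mathcal{R}$, and bound the $L^p$ norm term by term using Theorem~\ref{thm:bounded_R}. The only step you should make explicit is the passage from $\D\mathcal{B}(v,A) = vA + \fint h$ to $vA - \fint vA$: this requires either noting that a divergence on $\TT^2$ has zero spatial mean (forcing the constant), or, as the paper does, integrating by parts to compute $\fint h_i = -\fint v_l\,\partial_j\mathcal{R}_{ijk}A_{lk} = -\fint v_l A_{li}$; your word ``rearranging'' hides this.
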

\begin{proof}
A direct computation gives
\begin{align*}
\D( \mathcal{B}(v , A)  ) & = v_l \p_j \mathcal{R}_{ijk}A_{lk}  +   \p_j v_l \mathcal{R}_{ijk}A_{lk}- \D \mathcal{R}( \p_i v_l \mathcal{R}_{ijk}A_{lk} ) \\
&=     v_l  A_{il}    + \fint \p_i v_l \mathcal{R}_{ijk}A_{lk}
\end{align*}
where we have used the fact that $A$ has zero mean and $\mathcal{R}$ is symmetric.

Integrating by parts, we have
\begin{align*}
 \fint \p_i v_l \mathcal{R}_{ijk}A_{lk} =  -\fint  v_l \p_i \mathcal{R}_{ijk}A_{lk} =-\fint  v_l A_{lj},
\end{align*}
which implies that
\begin{align*}
\D( \mathcal{B}(v , A)  )   
 =     v A -       \fint v A.
\end{align*}

The second estimate follows immediately from the definition of $\mathcal{B}$ and Theorem~\ref{thm:bounded_R}.

\end{proof}

\subsection*{Conflicts of interest}The authors have no relevant financial or non-financial interests to disclose.

\bibliographystyle{alpha}
\bibliography{2D_NSE} 

\begin{thebibliography}{CDLDR18}

\bibitem[BA94]{MR1308857}
Matania Ben-Artzi.
\newblock Global solutions of two-dimensional {N}avier-{S}tokes and {E}uler
  equations.
\newblock {\em Arch. Rational Mech. Anal.}, 128(4):329--358, 1994.

\bibitem[BBV20]{MR4105741}
Rajendra Beekie, Tristan Buckmaster, and Vlad Vicol.
\newblock Weak solutions of ideal {MHD} which do not conserve magnetic
  helicity.
\newblock {\em Ann. PDE}, 6(1):Paper No. 1, 40, 2020.

\bibitem[BCD11]{MR2768550}
Hajer Bahouri, Jean-Yves Chemin, and Rapha\"{e}l Danchin.
\newblock {\em Fourier analysis and nonlinear partial differential equations},
  volume 343 of {\em Grundlehren der Mathematischen Wissenschaften [Fundamental
  Principles of Mathematical Sciences]}.
\newblock Springer, Heidelberg, 2011.

\bibitem[BCV22]{1809.00600}
Tristan Buckmaster, Maria Colombo, and Vlad Vicol.
\newblock Wild solutions of the {N}avier-{S}tokes equations whose singular sets
  in time have {H}ausdorff dimension strictly less than 1.
\newblock {\em J. Eur. Math. Soc. (JEMS)}, 24(9):3333--3378, 2022.

\bibitem[BDLIS15]{MR3374958}
Tristan Buckmaster, Camillo De~Lellis, Philip Isett, and L\'aszl\'o
  Sz\'ekelyhidi, Jr.
\newblock Anomalous dissipation for {$1/5$}-{H}\"older {E}uler flows.
\newblock {\em Ann. of Math. (2)}, 182(1):127--172, 2015.

\bibitem[BDLS16]{MR3530360}
Tristan Buckmaster, Camillo De~Lellis, and L\'aszl\'o Sz\'ekelyhidi, Jr.
\newblock Dissipative {E}uler flows with {O}nsager-critical spatial regularity.
\newblock {\em Comm. Pure Appl. Math.}, 69(9):1613--1670, 2016.

\bibitem[BEP85]{MR794825}
G.~Benfatto, R.~Esposito, and M.~Pulvirenti.
\newblock Planar {N}avier-{S}tokes flow for singular initial data.
\newblock {\em Nonlinear Anal.}, 9(6):533--545, 1985.

\bibitem[BLJV18]{1701.08678}
T.~Buckmaster, C.~De Lellis, L.~Székelyhidi Jr., and V.~Vicol.
\newblock Onsager's conjecture for admissible weak solutions.
\newblock {\em Comm. Pure Appl. Math.}, to appear, 2018.

\bibitem[BMNV21]{2101.09278}
Tristan Buckmaster, Nader Masmoudi, Matthew Novack, and Vlad Vicol.
\newblock Non-conservative {$H^{\frac 12-}$} weak solutions of the
  incompressible 3d euler equations.
\newblock {\em \eprint{2101.09278}}, 2021.

\bibitem[BMS21]{arXiv:2007.08011}
Jan Burczak, Stefano Modena, and L\'{a}szl\'{o} Sz\'{e}kelyhidi.
\newblock Non uniqueness of power-law flows.
\newblock {\em Comm. Math. Phys.}, 388(1):199--243, 2021.

\bibitem[BP08]{MR2473255}
Jean Bourgain and Nata\v{s}a Pavlovi\'{c}.
\newblock Ill-posedness of the {N}avier-{S}tokes equations in a critical space
  in 3{D}.
\newblock {\em J. Funct. Anal.}, 255(9):2233--2247, 2008.

\bibitem[BSV19]{MR3987721}
Tristan Buckmaster, Steve Shkoller, and Vlad Vicol.
\newblock Nonuniqueness of weak solutions to the {SQG} equation.
\newblock {\em Comm. Pure Appl. Math.}, 72(9):1809--1874, 2019.

\bibitem[BV19]{MR3898708}
Tristan Buckmaster and Vlad Vicol.
\newblock Nonuniqueness of weak solutions to the {N}avier-{S}tokes equation.
\newblock {\em Ann. of Math. (2)}, 189(1):101--144, 2019.

\bibitem[BV21]{MR4188806}
Tristan Buckmaster and Vlad Vicol.
\newblock Convex integration constructions in hydrodynamics.
\newblock {\em Bull. Amer. Math. Soc. (N.S.)}, 58(1):1--44, 2021.

\bibitem[Cal90]{MR968416}
Calixto~P. Calder\'{o}n.
\newblock Existence of weak solutions for the {N}avier-{S}tokes equations with
  initial data in {$L^p$}.
\newblock {\em Trans. Amer. Math. Soc.}, 318(1):179--200, 1990.

\bibitem[CCFS08]{MR2422377}
A.~Cheskidov, P.~Constantin, S.~Friedlander, and R.~Shvydkoy.
\newblock Energy conservation and {O}nsager's conjecture for the {E}uler
  equations.
\newblock {\em Nonlinearity}, 21(6):1233--1252, 2008.

\bibitem[CCS20]{MR4170311}
Gennaro Ciampa, Gianluca Crippa, and Stefano Spirito.
\newblock Weak solutions obtained by the vortex method for the 2{D} {E}uler
  equations are {L}agrangian and conserve the energy.
\newblock {\em J. Nonlinear Sci.}, 30(6):2787--2820, 2020.

\bibitem[CDLDR18]{MR3843425}
Maria Colombo, Camillo De~Lellis, and Luigi De~Rosa.
\newblock Ill-posedness of {L}eray solutions for the hypodissipative
  {N}avier-{S}tokes equations.
\newblock {\em Comm. Math. Phys.}, 362(2):659--688, 2018.

\bibitem[CFLS16]{MR3551263}
A.~Cheskidov, M.~C.~Lopes Filho, H.~J.~Nussenzveig Lopes, and R.~Shvydkoy.
\newblock Energy conservation in two-dimensional incompressible ideal fluids.
\newblock {\em Comm. Math. Phys.}, 348(1):129--143, 2016.

\bibitem[CG06]{MR2290141}
Jean-Yves Chemin and Isabelle Gallagher.
\newblock On the global wellposedness of the 3-{D} {N}avier-{S}tokes equations
  with large initial data.
\newblock {\em Ann. Sci. \'{E}cole Norm. Sup. (4)}, 39(4):679--698, 2006.

\bibitem[CGP11]{MR2776367}
Jean-Yves Chemin, Isabelle Gallagher, and Marius Paicu.
\newblock Global regularity for some classes of large solutions to the
  {N}avier-{S}tokes equations.
\newblock {\em Ann. of Math. (2)}, 173(2):983--1012, 2011.

\bibitem[CL21]{2004.09538}
Alexey Cheskidov and Xiaoyutao Luo.
\newblock Nonuniqueness of weak solutions for the transport equation at
  critical space regularity.
\newblock {\em Ann. PDE}, 7(1):Paper No. 2, 45, 2021.

\bibitem[CL22]{2009.06596}
Alexey Cheskidov and Xiaoyutao Luo.
\newblock Sharp nonuniqueness for the {N}avier-{S}tokes equations.
\newblock {\em Invent. Math.}, 229(3):987--1054, 2022.

\bibitem[Cot86]{MR853597}
Georges-Henri Cottet.
\newblock \'{E}quations de {N}avier-{S}tokes dans le plan avec tourbillon
  initial mesure.
\newblock {\em C. R. Acad. Sci. Paris S\'{e}r. I Math.}, 303(4):105--108, 1986.

\bibitem[CS10]{MR2566571}
A.~Cheskidov and R.~Shvydkoy.
\newblock Ill-posedness of the basic equations of fluid dynamics in {B}esov
  spaces.
\newblock {\em Proc. Amer. Math. Soc.}, 138(3):1059--1067, 2010.

\bibitem[Dai21]{1812.11311}
Mimi Dai.
\newblock Nonunique weak solutions in {L}eray-{H}opf class for the
  three-dimensional {H}all-{MHD} system.
\newblock {\em SIAM J. Math. Anal.}, 53(5):5979--6016, 2021.

\bibitem[DLS09]{MR2600877}
C.~De~Lellis and L.~Sz\'ekelyhidi, Jr.
\newblock The {E}uler equations as a differential inclusion.
\newblock {\em Ann. of Math. (2)}, 170(3):1417--1436, 2009.

\bibitem[DLS13]{MR3090182}
Camillo De~Lellis and L\'aszl\'o Sz\'ekelyhidi, Jr.
\newblock Dissipative continuous {E}uler flows.
\newblock {\em Invent. Math.}, 193(2):377--407, 2013.

\bibitem[DLS14]{MR3254331}
Camillo De~Lellis and L\'{a}szl\'o Sz\'{e}kelyhidi, Jr.
\newblock Dissipative {E}uler flows and {O}nsager's conjecture.
\newblock {\em J. Eur. Math. Soc. (JEMS)}, 16(7):1467--1505, 2014.

\bibitem[DLS19]{MR3929468}
Camillo De~Lellis and L\'{a}szl\'{o} Sz\'{e}kelyhidi, Jr.
\newblock On turbulence and geometry: from {N}ash to {O}nsager.
\newblock {\em Notices Amer. Math. Soc.}, 66(5):677--685, 2019.

\bibitem[DR19]{MR3941228}
Luigi De~Rosa.
\newblock Infinitely many {L}eray-{H}opf solutions for the fractional
  {N}avier-{S}tokes equations.
\newblock {\em Comm. Partial Differential Equations}, 44(4):335--365, 2019.

\bibitem[DS17]{MR3614753}
Sara Daneri and L\'aszl\'o Sz\'ekelyhidi, Jr.
\newblock Non-uniqueness and h-principle for {H}\"older-continuous weak
  solutions of the {E}uler equations.
\newblock {\em Arch. Ration. Mech. Anal.}, 224(2):471--514, 2017.

\bibitem[FJR72]{MR316915}
E.~B. Fabes, B.~F. Jones, and N.~M. Rivi\`ere.
\newblock The initial value problem for the {N}avier-{S}tokes equations with
  data in {$L^{p}$}.
\newblock {\em Arch. Rational Mech. Anal.}, 45:222--240, 1972.

\bibitem[FLRT00]{MR1813331}
Giulia Furioli, Pierre~G. Lemari\'{e}-Rieusset, and Elide Terraneo.
\newblock Unicit\'{e} dans {$L^3(\Bbb R^3)$} et d'autres espaces fonctionnels
  limites pour {N}avier-{S}tokes.
\newblock {\em Rev. Mat. Iberoamericana}, 16(3):605--667, 2000.

\bibitem[FLS21]{MR4198715}
Daniel Faraco, Sauli Lindberg, and L\'{a}szl\'{o} Sz\'{e}kelyhidi, Jr.
\newblock Bounded solutions of ideal {MHD} with compact support in space-time.
\newblock {\em Arch. Ration. Mech. Anal.}, 239(1):51--93, 2021.

\bibitem[Ger05]{MR2135239}
Pierre Germain.
\newblock Solutions globales d'\'{e}nergie infinie de l'\'{e}quation de
  {N}avier-{S}tokes 2{D}.
\newblock {\em C. R. Math. Acad. Sci. Paris}, 340(7):547--550, 2005.

\bibitem[Ger08]{MR2473256}
Pierre Germain.
\newblock The second iterate for the {N}avier-{S}tokes equation.
\newblock {\em J. Funct. Anal.}, 255(9):2248--2264, 2008.

\bibitem[GG05]{MR2178064}
Isabelle Gallagher and Thierry Gallay.
\newblock Uniqueness for the two-dimensional {N}avier-{S}tokes equation with a
  measure as initial vorticity.
\newblock {\em Math. Ann.}, 332(2):287--327, 2005.

\bibitem[GGL05]{MR2176270}
Isabelle Gallagher, Thierry Gallay, and Pierre-Louis Lions.
\newblock On the uniqueness of the solution of the two-dimensional
  {N}avier-{S}tokes equation with a {D}irac mass as initial vorticity.
\newblock {\em Math. Nachr.}, 278(14):1665--1672, 2005.

\bibitem[GMO88]{MR1017289}
Yoshikazu Giga, Tetsuro Miyakawa, and Hirofumi Osada.
\newblock Two-dimensional {N}avier-{S}tokes flow with measures as initial
  vorticity.
\newblock {\em Arch. Rational Mech. Anal.}, 104(3):223--250, 1988.

\bibitem[GMS01]{MR1860126}
Y.~Giga, S.~Matsui, and O.~Sawada.
\newblock Global existence of two-dimensional {N}avier-{S}tokes flow with
  nondecaying initial velocity.
\newblock {\em J. Math. Fluid Mech.}, 3(3):302--315, 2001.

\bibitem[GP02]{MR1891170}
Isabelle Gallagher and Fabrice Planchon.
\newblock On global infinite energy solutions to the {N}avier-{S}tokes
  equations in two dimensions.
\newblock {\em Arch. Ration. Mech. Anal.}, 161(4):307--337, 2002.

\bibitem[Ise18]{MR3866888}
Philip Isett.
\newblock A proof of {O}nsager's conjecture.
\newblock {\em Ann. of Math. (2)}, 188(3):871--963, 2018.

\bibitem[IV15]{MR3479065}
Philip Isett and Vlad Vicol.
\newblock H\"older continuous solutions of active scalar equations.
\newblock {\em Ann. PDE}, 1(1):Art. 2, 77, 2015.

\bibitem[Kat94]{MR1270113}
Tosio Kato.
\newblock The {N}avier-{S}tokes equation for an incompressible fluid in {${\bf
  R}^2$} with a measure as the initial vorticity.
\newblock {\em Differential Integral Equations}, 7(3-4):949--966, 1994.

\bibitem[KT01]{MR1808843}
Herbert Koch and Daniel Tataru.
\newblock Well-posedness for the {N}avier-{S}tokes equations.
\newblock {\em Adv. Math.}, 157(1):22--35, 2001.

\bibitem[Kui55]{MR0075640}
Nicolaas~H. Kuiper.
\newblock On {$C^1$}-isometric imbeddings. {I}, {II}.
\newblock {\em Nederl. Akad. Wetensch. Proc. Ser. A. {\bf 58} = Indag. Math.},
  17:545--556, 683--689, 1955.

\bibitem[Lad58]{MR0108963}
O.~A. Ladyzhenskaja.
\newblock Solution ``in the large'' to the boundary-value problem for the
  {N}avier-{S}tokes equations in two space variables.
\newblock {\em Dokl. Akad. Nauk SSSR}, 123:427--429), 1958.

\bibitem[Ler33]{Leray1933}
Jean Leray.
\newblock Étude de diverses équations intégrales non linéaires et de
  quelques problèmes que pose l'hydrodynamique.
\newblock {\em Journal de Mathématiques Pures et Appliquées}, 12:1--82, 1933.

\bibitem[Ler34]{MR1555394}
J.~Leray.
\newblock Sur le mouvement d'un liquide visqueux emplissant l'espace.
\newblock {\em Acta Math.}, 63(1):193--248, 1934.

\bibitem[LMPP21]{MR4228012}
S.~Lanthaler, S.~Mishra, and C.~Pares-Pulido.
\newblock On the conservation of energy in two-dimensional incompressible
  flows.
\newblock {\em Nonlinearity}, 34(2):1084--1136, 2021.

\bibitem[LQ20]{MR4097236}
Tianwen Luo and Peng Qu.
\newblock Non-uniqueness of weak solutions to 2{D} hypoviscous
  {N}avier-{S}tokes equations.
\newblock {\em J. Differential Equations}, 269(4):2896--2919, 2020.

\bibitem[LR16]{MR3469428}
Pierre~Gilles Lemari\'e-Rieusset.
\newblock {\em The {N}avier-{S}tokes problem in the 21st century}.
\newblock CRC Press, Boca Raton, FL, 2016.

\bibitem[LT20]{1808.07595}
Tianwen Luo and Edriss~S. Titi.
\newblock Non-uniqueness of weak solutions to hyperviscous {N}avier-{S}tokes
  equations: on sharpness of {J}.-{L}. {L}ions exponent.
\newblock {\em Calc. Var. Partial Differential Equations}, 59(3):Paper No. 92,
  15, 2020.

\bibitem[Luo19]{MR3951691}
Xiaoyutao Luo.
\newblock Stationary solutions and nonuniqueness of weak solutions for the
  {N}avier-{S}tokes equations in high dimensions.
\newblock {\em Arch. Ration. Mech. Anal.}, 233(2):701--747, 2019.

\bibitem[Maz03]{MR1946395}
Anna~L. Mazzucato.
\newblock Besov-{M}orrey spaces: function space theory and applications to
  non-linear {PDE}.
\newblock {\em Trans. Amer. Math. Soc.}, 355(4):1297--1364, 2003.

\bibitem[MS18]{MR3884855}
Stefano Modena and L\'{a}szl\'{o} Sz\'{e}kelyhidi, Jr.
\newblock Non-uniqueness for the transport equation with {S}obolev vector
  fields.
\newblock {\em Ann. PDE}, 4(2):Art. 18, 38, 2018.

\bibitem[MS20]{MR4138227}
Stefano Modena and Gabriel Sattig.
\newblock Convex integration solutions to the transport equation with full
  dimensional concentration.
\newblock {\em Ann. Inst. H. Poincar\'{e} Anal. Non Lin\'{e}aire},
  37(5):1075--1108, 2020.

\bibitem[Nas54]{MR0065993}
John Nash.
\newblock {$C^1$} isometric imbeddings.
\newblock {\em Ann. of Math. (2)}, 60:383--396, 1954.

\bibitem[Nov20]{MR4126319}
Matthew Novack.
\newblock Nonuniqueness of weak solutions to the 3 dimensional
  quasi-geostrophic equations.
\newblock {\em SIAM J. Math. Anal.}, 52(4):3301--3349, 2020.

\bibitem[PZ14]{MR3177282}
Marius Paicu and Zhifei Zhang.
\newblock Global well-posedness for 3{D} {N}avier-{S}tokes equations with
  ill-prepared initial data.
\newblock {\em J. Inst. Math. Jussieu}, 13(2):395--411, 2014.

\bibitem[Shv11]{MR2813340}
R.~Shvydkoy.
\newblock Convex integration for a class of active scalar equations.
\newblock {\em J. Amer. Math. Soc.}, 24(4):1159--1174, 2011.

\end{thebibliography}

\end{document}